\documentclass[12pt,reqno]{amsart}
\textwidth=15cm \textheight=23cm
\oddsidemargin=0.5cm \evensidemargin=0.5cm
\topmargin=0cm

% \documentclass[11pt,reqno]{amsart}
% \textwidth=12.5truecm \textheight=18truecm
% \oddsidemargin=0truecm \evensidemargin=0truecm
% \topmargin=0cm

%\usepackage[notref,notcite]{showkeys}

\usepackage{amsmath} 

\usepackage{amsthm}
\usepackage{amssymb}
\usepackage{graphics}
\usepackage{latexsym}

\numberwithin{equation}{section}
\newtheorem{thm}{Theorem}[section]
\newtheorem{prop}[thm]{Proposition}
\newtheorem{lem}[thm]{Lemma}
\newtheorem{cor}[thm]{Corollary}

\theoremstyle{remark}

\newtheorem{defn}{Definition}

\newcommand{\BBB}{\mathbb}
\newcommand{\R}{{\BBB R}}
\newcommand{\Z}{{\BBB Z}}
\newcommand{\T}{{\BBB T}}

\newcommand{\N}{{\BBB N}}

\newcommand{\HT}{{\mathcal H}}%
\newcommand{\LR}[1]{{\langle {#1} \rangle }}

\newcommand{\lec}{{\ \lesssim \ }}

\newcommand{\al}{\alpha}

\newcommand{\ga}{\gamma}
\newcommand{\Ga}{\Gamma}

\newcommand{\vp}{\varphi}

\newcommand{\e}{\varepsilon}

\newcommand{\la}{\lambda}
\newcommand{\La}{\Lambda}

\newcommand{\de}{\delta}

\newcommand{\supp}{\operatorname{supp}}

\newcommand{\imply}{\Rightarrow}
\newcommand{\I}{\infty}

\newcommand{\sgn}{\operatorname{sgn}}

\newcommand{\EQS}[1]{\begin{align} #1 \end{align}}
\newcommand{\EQQS}[1]{\begin{align*} #1 \end{align*}}
\newcommand{\EQQ}[1]{\begin{equation*} \begin{split} #1
 \end{split} \end{equation*}}

\newcommand{\F}{\mathcal{F}}

\newcommand{\ti}{\widetilde}
\newcommand{\ha}{\widehat}

\newcommand{\ds}{\partial_{x}}%
\newcommand{\dt}{\partial_{t}}%
%

%%%%%%%%%%%%%%%%%%%%%%%%%%%%%%%%%%%%%%%%%%%%%%
%%%%%%%%%%%%%%%%%%%%%%%%%%%%%%%%%%%%%%%%%%%%%%
%%%%%%%%%%%%%%%%%%%           I stoped here.             %%%%%%%%%%%%%%%%%%%%%%%%%%%
%%%%%%%%%%%%%%%%%%%%%%%%%%%%%%%%%%%%%%%%%%%%%%
%%%%%%%%%%%%%%%%%%%%%%%%%%%%%%%%%%%%%%%%%%%%%%

%%%%%%%%%%%%%%%%%%%%%%%%%%%%%%%%%%%%%%%%%%%%%%%%%%%%%%%%%
%%%%%%%%%%%%%%%%%%%%%%%%%%%%%%%%%%%%%%%%%%%%%%%%%%%%%%%%%

\title[L.W.P. for Benjamin-Ono type equations]
{Local well-posedness for third order Benjamin-Ono type equations on the torus}

\author[T. Tanaka]{Tomoyuki Tanaka}
\address[T. Tanaka]{Graduate School of Mathematics, Nagoya University,
Chikusa-ku, Nagoya, 464-8602, Japan}
\email[T. Tanaka]{d18003s@math.nagoya-u.ac.jp}

\keywords{Benjamin-Ono equation, well-posedness, Cauchy problem, energy method, higher order}
%\subjclass[2010]{35Q53, 35G55, 37K10, 35A01, 35A02, 35B45, 35B65}
\begin{document}

\begin{abstract}
We consider the Cauchy problem of third order Benjamin-Ono type equations on the torus.
Nonlinear terms may yield derivative losses, which prevents us from using the classical energy method.
In order to overcome that difficulty, we add a correction term into the energy.
We also use the Bona-Smith type argument to show the continuous dependence.
\end{abstract}
\maketitle
\setcounter{page}{001}

%%%%%%%%%%%%%%%%%%%%%%%%%%%%%%%%%%%%%
%%%%%%%%%%%%%%%%%%%%%%%%%%%%%%%%%%%%%
%%%%%%%%%%%%%%%   Introduction   %%%%%%%%%%%%%%
%%%%%%%%%%%%%%%%%%%%%%%%%%%%%%%%%%%%%
%%%%%%%%%%%%%%%%%%%%%%%%%%%%%%%%%%%%%

\section{Introduction}

%The Benjamin-Ono equation
%\EQS{\label{2BO}
%\dt u+\HT\ds^{2}u+u\ds u=0
%}
%is the lowest order equation in the Benjamin-Ono hierarchy,
%where $\HT$ is the Hilbert transform on the torus defined by
%\[\ha{\HT f}(0)=0\quad{\rm and}\quad\ha{\HT f}(k)=-i\sgn(k)\hat{f}(k),\quad k\in\Z\backslash\{0\},\]
%where $\hat{f}$ is the Fourier transform of $f$: $\hat{f}(k)=\F f(k)=(2\pi)^{-1}\int_{\T}f(x)e^{-ixk}dx$.

We consider the Cauthy problem of the following third order Benjamin-Ono type equations on the torus $\T(:=\R/2\pi\Z)$:
\EQS{
\label{BO1}
&\dt u-\ds^{3}u+u^{2}\ds u+c_{1}\ds(u\HT\ds u)+c_{2}\HT\ds(u\ds u)=0,\quad(t,x)\in\R\times\T,\\
\label{BO2}
&u(0,x)=\vp(x),
}
where the initial data $\vp$ and the unknown function $u$ are real valued, and $c_{1},c_{2}\in\R$.
$\HT$ is the Hilbert transform on the torus defined by
\[\ha{\HT f}(0)=0\quad{\rm and}\quad\ha{\HT f}(k)=-i\sgn(k)\hat{f}(k),\quad k\in\Z\backslash\{0\},\]
where $\hat{f}$ is the Fourier transform of $f$: $\hat{f}(k)=\F f(k)=(2\pi)^{-1/2}\int_{\T}f(x)e^{-ixk}dx$.
The well-known Benjamin-Ono equation
\EQS{
\label{2BO}
\dt u+\HT\ds^{2}u+2u\ds u=0
}
describes the behavior of long internal waves in deep stratified fluids.
The equation (\ref{2BO}) also has infinitely many conservation laws,
which generates a hierarchy of Hamiltonian equations of order $j$.
The equation (\ref{BO1}) with $c_{1}=c_{2}=\sqrt{3}/2$
is the second equation in the Benjamin-Ono hierarchy \cite{Matsuno}.

There are a lot of literature on the Cauchy problem on (\ref{2BO}).
On the real line case,
Ionescu-Kenig \cite{IoKe07} showed the local well-posedness in $H^{s}(\R)$ for $s\ge0$
(see also \cite{Molinet12} for another proof
and \cite{IoKe072} for the local well-posedness with small complex valued data).
On the periodic case, Molinet \cite{Molinet08,Molinet09} showed the local well-posedness
in $H^{s}(\T)$ for $s\ge0$ and that this result was sharp.
See \cite{ABFS,BuPl,Iorio86,KeKe,KoTz,Ponce,Tao} for former results.

On the Cauchy problem of (\ref{BO1}) with $c_{1}=c_{2}=\sqrt{3}/2$ on the real line,
Feng-Han \cite{FeHa96} proved the unique existence in $H^{s}(\R)$ for $4\le s\in\N$
by using the theory of complete integrability.
They also used the energy method with a correction term in order to show the uniqueness.
Feng \cite{Feng97} modified the energy method used in \cite{FeHa96}
and used an {\it a priori} bound of solutions in $H^{s}(\R)$
to show the ``weak'' continuous dependence in the following sense:
\EQS{\label{wcd}
\vp_{n}\to\vp\ {\rm in}\ H^{s-2}(\R)\ {\rm as}\ n\to\I
  \imply u_{n}\to u\ {\rm in}\ C([0,T];H^{s-2}(\R))\ {\rm as}\ n\to\I,
}
for $\vp,\vp_{n}\in H^{s}(\R)$ and $6\le s\in\N$.
Here, $u_{n}$ (resp. $u$) denotes the corresponding solution of (\ref{BO1})
with $c_{1}=c_{2}=\sqrt{3}/2$
and the initial data $\vp_{n}$ for $n\in\N$ (resp. $\vp$).
Note that the topology of the convergence is weaker than $H^{s}$.
Linares-Pilod-Ponce \cite{LPP11} and Molinet-Pilod \cite{MP12} succeed in proving
the local well-posedness in $H^{s}(\R)$ of the following equation
\[\dt u+d_{1}\ds^{3}u-d_{2}\HT\ds^{2}u=d_{3}u\ds u-d_{4}\ds(u\HT\ds u+\HT(u\ds u)),\]
for $s\ge2$ and $s\ge1$,
respectively.
Here, coefficients satisfy $d_{1}\in\R$, $d_{1}\neq0$ and $d_{j}>0$ for $j=2,3,4$.
Their proof involves the gauge transform and the Kato type smoothing estimate.

On the periodic case, as far as the author knows, there are no well-posedness results for the Cauchy problem of
(\ref{BO1}) available in the literature.
Although proofs in Feng-Han \cite{FeHa96} and Feng \cite{Feng97} above works,
and we cannot obtain the local well-posedness, that is, the resultant continuous dependence (\ref{wcd}) is weak.
And their proofs heavily depend on the complete integrability.
In particular, it is very important to have $c_{1}=c_{2}$ in their proofs.
It should also be pointed out that in the periodic case, we do not have the Kato type smoothing estimate,
which implies that the local well-posedness is far from trivial.

Therefore, in this article, we are interested in establishing
the local well-posedness of (\ref{BO1}) in $H^{s}(\T)$ for $s$ less than $4$
without using the theory of complete integrability.
In particular, we improve the ``weak'' continuous dependence (\ref{wcd}) shown in \cite{Feng97}
in order to fulfill conditions of the local well-posedness.
Moreover, thanks to Lemma \ref{comm.est.4}, we can show the local well-posedness of the non-integrable case (\ref{BO1}).

The main result is the following:

\begin{thm}\label{main}
Let $s\ge s_{0}>5/2$.
For any $\vp\in H^{s}(\T)$,
there exist $T=T(\|\vp\|_{H^{s_{0}}})>0$ and the unique solution $u\in C([-T,T];H^{s}(\T))$
to the IVP (\ref{BO1})--(\ref{BO2}) on $[-T,T]$.
Moreover, for any $R>0$,
the solution map $\vp\mapsto u(t)$ is continuous
from the ball $\{\vp\in H^{s}(\T);\|\vp\|_{H^{s}}\le R\}$ to $C([-T,T];H^{s}(\T))$.
\end{thm}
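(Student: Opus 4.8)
The plan is to run a modified-energy method and then deduce the three assertions of the theorem in the Bona-Smith framework. Write (\ref{BO1}) as $\dt u=\ds^{3}u-u^{2}\ds u-c_{1}\ds(u\HT\ds u)-c_{2}\HT\ds(u\ds u)$. For a smooth solution and any $s\ge s_{0}$, differentiate $s$ times, pair with $\ds^{s}u$ in $L^{2}(\T)$, and estimate $\tfrac{d}{dt}\tfrac12\|u(t)\|_{H^{s}}^{2}$: the dispersive term is skew-adjoint and drops out, while the cubic term and most of the two Hilbert-transport contributions are bounded, after integrations by parts, by $C(\|u\|_{H^{s_{0}}})\|u\|_{H^{s}}^{2}$ --- here the embedding $H^{s_{0}}(\T)\hookrightarrow W^{2,\infty}(\T)$, valid for $s_{0}>5/2$, is what makes the lower-order factors absorbable, which is the origin of the threshold. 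The obstruction is a top-order remainder, essentially $(c_{1}-c_{2})\int u\,(\ds^{s+1}u)\,\HT\ds^{s+1}u\,dx$ plus companion terms of the same homogeneity, which cannot be closed by integration by parts alone: this is the derivative loss that defeats the classical energy method. To remove it I would replace $\tfrac12\|u\|_{H^{s}}^{2}$ by a modified energy $E^{s}(u):=\tfrac12\|u\|_{H^{s}}^{2}+N^{s}(u)$ with $N^{s}(u)$ a correction bilinear in $\ds^{s}u$, with coefficient built from $u$ and from $c_{1},c_{2}$, designed so that $\tfrac{d}{dt}N^{s}(u)$ cancels the offending remainder modulo commutator expressions; Lemma~\ref{comm.est.4} provides exactly the commutator bound needed for those, and it is this lemma that makes the non-integrable case $c_{1}\neq c_{2}$ accessible (when $c_{1}=c_{2}$ part of the difficulty is already absent, which is why the integrable case is comparatively easy). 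One then checks $E^{s}(u)\sim\|u\|_{H^{s}}^{2}$ as long as $\|u\|_{H^{s_{0}}}$ stays bounded and $\tfrac{d}{dt}E^{s}(u)\le C(\|u\|_{H^{s_{0}}})E^{s}(u)$ uniformly over $s\ge s_{0}$, so that Gronwall yields $\sup_{|t|\le T}\|u(t)\|_{H^{s}}\lesssim\|\vp\|_{H^{s}}$ on an interval $[-T,T]$ with $T=T(\|\vp\|_{H^{s_{0}}})$ depending only on the lower norm.

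Granting this a priori bound, I would construct solutions by a standard regularization --- parabolic regularization or Fourier (Galerkin) truncation of the equation --- producing smooth solutions to which the modified-energy bound applies uniformly in the regularizing parameter; a compactness argument then gives, for $\vp\in H^{s}$ with $s\ge s_{0}$, a solution $u\in L^{\I}([-T,T];H^{s})$, which the equation upgrades to $C([-T,T];H^{s})$ once strong time-continuity is checked. For data merely in $H^{s}$, take smooth $\vp_{n}\to\vp$ in $H^{s}$, solve for each, put the solutions on a common interval using the uniform bound, and show $(u_{n})$ is Cauchy in $C([-T,T];H^{s'})$ for a slightly lower $s'$ by running the same modified-energy computation on the difference $w=u_{n}-u_{m}$ (with a correction term now built from the reference solutions), which gives $\|w(t)\|_{H^{s'}}\lesssim\|\vp_{n}-\vp_{m}\|_{H^{s'}}$. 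Uniqueness of $H^{s}$ solutions follows from the same difference estimate.

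Finally, the continuous dependence on balls of $H^{s}$ is obtained by the Bona-Smith argument: approximate $\vp$ by mollifications $\vp_{\de}$, and combine (i) the uniform a priori bound, (ii) the difference estimate in $H^{s'}$, and (iii) the approximation properties $\|\vp-\vp_{\de}\|_{H^{s'}}=o(\de^{s-s'})$ and $\|\vp_{\de}\|_{H^{s+\si}}\lesssim\de^{-\si}\|\vp\|_{H^{s}}$, to interpolate and conclude $u_{\de}\to u$ in $C([-T,T];H^{s})$, together with stability of $\vp\mapsto u$ under $H^{s}$-perturbations of the data. The main obstacle is the a priori estimate of the first step: identifying the precise correction term $N^{s}(u)$ and verifying that $\tfrac{d}{dt}N^{s}(u)$ exactly annihilates the derivative-losing top-order contribution of both Hilbert-transport terms for arbitrary $c_{1},c_{2}$, with the remainder falling within the scope of Lemma~\ref{comm.est.4}.
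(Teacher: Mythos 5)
Your overall architecture --- parabolic regularization, a modified energy with a cubic correction term, and a Bona--Smith limit for persistence and continuous dependence --- is exactly the paper's. The gap is at the core of the argument: you have misidentified the derivative-losing term, and hence cannot have designed the right correction. After pairing the equation with $D^{s}u$, the uncontrollable contribution of $c_{1}\ds(u\HT\ds u)+c_{2}\HT\ds(u\ds u)$ is, up to terms handled by the commutator estimates, $2\bigl((c_{1}+c_{2})s+c_{2}\bigr)\LR{\ds u\,\HT D^{s}\ds u,D^{s}u}$ --- one derivative on the first factor, $s+1$ on the second, $s$ on the third; the presence of $\HT$ blocks the integration by parts that would otherwise symmetrize it. Its coefficient is \emph{not} $c_{1}-c_{2}$ and does not vanish when $c_{1}=c_{2}$, so the integrable case is in no way easier at top order. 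The term you name, $(c_{1}-c_{2})\int u\,(\ds^{s+1}u)\,\HT\ds^{s+1}u\,dx$, is not what arises, and in any case reduces by skew-adjointness of $\HT$ to $-\tfrac12\LR{[\HT,u]\ds^{s+1}u,\ds^{s+1}u}$, which the frequency support of $\sgn(\xi)-\sgn(\eta)$ renders harmless. Relatedly, Lemma~\ref{comm.est.4} is not what closes the top-order cancellation: in the paper it substitutes, at \emph{low} order, for the identity $\LR{\HT\ds(u\ds u),u}+\LR{\ds(u\HT\ds u),u}=0$ that is only available when $c_{1}=c_{2}$ (see Lemma~\ref{reduction} and the $L^{2}$ difference estimate). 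The actual cancellation is produced by the correction term $\la(s)\int_{\T}u\,(\HT D^{s}u)\,D^{s-2}\ds u\,dx$ with $\la(s)=-2((c_{1}+c_{2})s+c_{2})/3$, whose time derivative, through the \emph{dispersive} part $\ds^{3}u$ of the flow and the identity $\LR{\ds^{3}fg,h}+\LR{f\ds^{3}g,h}+\LR{fg,\ds^{3}h}=3\LR{\ds f\ds g,\ds h}$ (Lemma~\ref{good1}), yields exactly $-3\la(s)\LR{\ds u\,\HT D^{s}\ds u,D^{s}u}$. Since you explicitly defer "identifying the precise correction term and verifying the cancellation," and the candidate obstruction you propose to cancel is not the one that occurs, the a priori estimate --- and with it everything downstream --- is not established.

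A secondary inaccuracy: the difference estimate is not the clean Lipschitz bound $\|w(t)\|_{H^{s'}}\lesssim\|\vp_{n}-\vp_{m}\|_{H^{s'}}$ you assert. Even in $L^{2}$ the same derivative loss reappears and forces a modified difference energy $\ti{E}$ with a $\LR{D}^{-1}$-correction (Lemma~\ref{en.dif.L2}), and in $H^{s}$ the Gronwall inequality one actually obtains (Proposition~\ref{en.dif.s2}) carries the unavoidable terms $\|w\|_{H^{s_{0}-1}}^{2}\|u_{2}\|_{H^{s+1}}^{2}+\|w\|_{H^{s_{0}-2}}^{2}\|u_{2}\|_{H^{s+2}}^{2}$. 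Your Bona--Smith step (iii) is indeed how these are absorbed, balancing $\|J_{\ga}\vp\|_{H^{s+\al}}\lesssim\ga^{-\al}\|\vp\|_{H^{s}}$ against $\|w\|_{H^{s-\al}}\lesssim\ga^{\al}$, but it must be run against this weaker inequality rather than against a Lipschitz estimate, which for this quasilinear problem is not available.
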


Now, we mention the idea of the proof of Theorem \ref{main}.
The standard energy method gives us the local well-posedness of (\ref{2BO}) in $H^{s}(\T)$ for $s>3/2$.
On the other hand, nonlinear terms $\ds(u\HT\ds u)$ and $\HT\ds(u\ds u)$ in (\ref{BO1}) have two derivatives,
and the energy estimate gives only the following:
\EQS{\label{deri.los.}
\frac{d}{dt}\|\ds^{k}u(t)\|_{L^{2}}^{2}
  \lesssim(1+\|\ds^{2}u\|_{L^{\I}})^{2}\|\ds^{k}u(t)\|_{L^{2}}^{2}+\left|\int\ds u(\HT\ds^{k+1}u)\ds^{k}udx\right|.
}
It is difficult to handle the last term in the right hand side by $\|u\|_{H^{k}}$,
which is the main difficulty in this problem.
To overcome that difficulty, we add a correction term into the energy (see Definition \ref{def1}):
\EQQS{
E_{*}(u)
:=\|u\|_{L^{2}}^{2}+\|D^{s}u\|_{L^{2}}^{2}+a_{s}\|u\|_{L^{2}}^{4s+2}
  +b_{s}\int u(\HT D^{s}u)D^{s-2}\ds udx,
}
where $D:=\F^{-1}|\xi|\F$, following the idea from Kwon \cite{Kwon},
who studied the local well-posedness of the fifth order KdV equation
(see also Segata \cite{Segata}, Kenig-Pilod \cite{KePi16} and Tsugawa \cite{Tsugawa17}).
The correction term allows us to cancel out the worst term in (\ref{deri.los.}),
which makes it possible to evaluate the $H^{s}$-norm of the solution by that of the initial data.
It is worth pointing out that our proof refines the idea in \cite{Feng97}.
Indeed, Feng introduced the following energy estimate
in order to show the ``weak'' continuous dependence (\ref{wcd}):
\EQQS{
&\frac{d}{dt}\left(\|\ds^{k-2}w\|_{L^{2}}^{2}+\frac{2k-3}{4}\int_{\R}(u+v)\ds^{k-3}w\HT\ds^{k-2}wdx\right)\\
&\le C(T,\|\vp\|_{H^{k}},\|\psi\|_{H^{k}})\|w(t)\|_{H^{k-2}}^{2},
}
on $[0,T]$,
%\[E_{**}(u,v):=\|u-v\|_{L^{2}}^{2}+\|\ds^{k-2}(u-v)\|_{L^{2}}^{2}+c_{s}\int (u+v)\ds^{k-3}(u-v)\HT\ds^{k-2}(u-v),\]
where $w=u-v$ and $u,v\in C([0,T];H^{k}(\R))$ satisty (\ref{BO1}) with $c_{1}=c_{2}=\sqrt{3}/2$
and initial data $\vp,\psi\in H^{k}(\R)$, respectively.
Here, we would like to have the estimate for $\|w\|_{H^{k}}$.
If we simply replace $k-2$ with $k$ in the above estimate,
the constant in the right hand side depends on $\|\vp\|_{H^{k+2}}$ (resp. $\|\psi\|_{H^{k+2}}$),
which cannot be handled by $\|\vp\|_{H^{k}}$ (resp. $\|\psi\|_{H^{k}}$).
Therefore, we need to find a different correction term (see Definition \ref{def1})
and estimate the difference between two solutions in $H^{k}(\T)$ more carefully
(see the proof of Proposition \ref{en.dif.s2})
so as to complete the continuous dependence.

Another difficulty is the presence of the Hilbert transform $\HT$,
which restricts the possibility of using the integration by parts for some terms.
Recall that for real valued functions $f,g$, we have
\[|\LR{fD^{s}g,D^{s}\ds g}_{L^{2}}|\le\frac{1}{2}\|\ds f\|_{\I}\|D^{s}g\|_{L^{2}}^{2}.\]
However, in our problem we cannot apply the integration by parts to
\[\LR{\ds f\HT D^{s}\ds g,D^{s}g}_{L^{2}},\]
which is nothing but the term which we cancel out by introducing a correction term.
%The Hilbert transform changes a dispersive term into a viscous term and vice versa.
%The term $\ds^{2}u$ is a viscous term, but $\HT\ds^{2}u$ is a dispersive term,
%which means the presence of the Hilbert transform changes the situation completely.

%Thanks to Lemma \ref{comm.est.4},
%our proof turns to be applicable to estblishing the local well-posedness of the following generalized equation:
%\EQS{\label{gBO}
%\dt u+a\ds^{3}u+bu^{2}\ds u+c\ds(u\HT\ds u)+d\HT\ds(u\ds u)=0
%}
%even when $c\neq d$, where $a,b,c,d\in\R$ and $a\neq0$,
%which is not considered in \cite{LPP11,MP12}.

We notice that the $L^{2}$-norm is conserved by solutions of equations (\ref{BO1}) with $c_{1}=c_{2}$
thanks to the following equality:
\[\LR{\HT\ds(u\ds u),u}_{L^{2}}+\LR{\ds(u\HT\ds u),u}_{L^{2}}=0,\]
which helps us to handle nonlinear terms.
In the case $c_{1}\neq c_{2}$, we use Lemma \ref{comm.est.4} originally proved in \cite{DMcP}.

Subsequently, using the conservation law corresponding to the $H^{3}$-norm of the solution,
we can obtain the following result:

\begin{cor}
The Cauchy problem (\ref{BO1})--(\ref{BO2}) with $c_{1}=c_{2}=\sqrt{3}/2$
is globally well-posed in $H^{s}(\T)$ for $s\ge3$.
\end{cor}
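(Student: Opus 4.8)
The plan is to globalize the local result of Theorem~\ref{main} by means of an a priori bound on the $H^{3}(\T)$-norm coming from the Benjamin-Ono hierarchy. \emph{Step 1 (reduction to an a priori bound).} Fix $s\ge3$. Since $3>5/2$, Theorem~\ref{main} applies with $s_{0}=3$, producing for $\vp\in H^{s}(\T)$ a unique solution $u\in C([-T,T];H^{s}(\T))$ whose lifespan $T=T(\|\vp\|_{H^{3}})$ depends only on the $H^{3}$-norm of the data (this is why the global range is $s\ge3$: $H^{3}$ is the first level of the hierarchy lying strictly above the local-theory threshold $5/2$, and one needs $\vp\in H^{3}$ for the corresponding conserved quantity to be finite). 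It therefore suffices to prove the a priori estimate
\[
\sup_{|t|\le T}\|u(t)\|_{H^{3}}\le C\bigl(\|\vp\|_{H^{3}}\bigr)
\]
with $C$ independent of $T$: granting it, one re-solves from $u(\pm T)$ with the uniform time step $T\bigl(C(\|\vp\|_{H^{3}})\bigr)$ and iterates to cover all of $\R$, and since the lifespan of the $H^{s}$-solution in Theorem~\ref{main} is likewise controlled by the $H^{3}$-norm, the same iteration keeps $u\in C(\R;H^{s}(\T))$.

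\emph{Step 2 (the a priori bound).} The equation (\ref{BO1}) with $c_{1}=c_{2}=\sqrt{3}/2$ is the second equation in the Benjamin-Ono hierarchy \cite{Matsuno}, hence a Hamiltonian flow whose Hamiltonian Poisson-commutes with those generating the hierarchy; consequently it conserves every classical Benjamin-Ono conservation law, and in particular there is a functional
\[
\mathcal{E}_{3}(u)=\|\ds^{3}u\|_{L^{2}}^{2}+\mathcal{R}(u)
\]
conserved along the flow of (\ref{BO1}), where $\mathcal{R}(u)$ is a finite sum of integrals over $\T$ of monomials in $u$ and its derivatives of order $\le2$ (some carrying a Hilbert transform), each of scaling strictly below $\|\ds^{3}u\|_{L^{2}}^{2}$. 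Using the conservation of $\|u\|_{L^{2}}$ (valid for $c_{1}=c_{2}$, as noted above) together with the one-dimensional Gagliardo-Nirenberg inequalities on $\T$, each term of $\mathcal{R}(u)$ is dominated by $\tfrac12\|\ds^{3}u\|_{L^{2}}^{2}+C(\|u\|_{L^{2}})$ after Young's inequality, whence $\|\ds^{3}u(t)\|_{L^{2}}^{2}\le2\mathcal{E}_{3}(\vp)+C(\|\vp\|_{L^{2}})$ and the estimate of Step~1 follows. The identity $\tfrac{d}{dt}\mathcal{E}_{3}(u)=0$ is first verified for smooth solutions, where all the integrations by parts are legitimate; for general $\vp\in H^{3}(\T)$ one takes $\vp_{n}\in H^{\infty}(\T)$ with $\vp_{n}\to\vp$ in $H^{3}$, invokes the continuous dependence of Theorem~\ref{main} to get $u_{n}\to u$ in $C([-T,T];H^{3}(\T))$, and passes to the limit, $\mathcal{E}_{3}$ being continuous on $H^{3}(\T)$ by Sobolev embedding.

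\emph{Step 3 (global well-posedness).} Global existence and uniqueness now follow from Steps~1--2. For the continuity of the flow map on bounded sets, fix $T>0$ and $R>0$; if $\|\vp\|_{H^{s}}\le R$ then $\|\vp\|_{H^{3}}\le R$, so by Step~2 the solution stays in the $H^{3}$-ball of a fixed radius $M=M(R)$ throughout its existence interval, and the local lifespan at any base time is at least a fixed $\de=T(M)>0$. Covering $[-T,T]$ by $N=\lceil2T/\de\rceil$ intervals of length $\de$ and applying the local continuous dependence of Theorem~\ref{main} on each of them in turn---the data at the $j$-th step, $u(\pm j\de)$, converging in $H^{s}$ by the previous step while remaining in the fixed $H^{3}$-ball---exhibits $\vp\mapsto u$ as a composition of $N$ continuous maps from the $R$-ball of $H^{s}(\T)$ into $C([-T,T];H^{s}(\T))$, which is the asserted global well-posedness.

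The step I expect to be the main obstacle is Step~2: one must pin down the correct conserved functional $\mathcal{E}_{3}$ of the hierarchy, prove its coercivity modulo the (conserved) $L^{2}$-norm, and justify its invariance for merely $H^{3}$-regular solutions---and it is precisely here that the continuous dependence supplied by Theorem~\ref{main} is needed. Everything else is a routine continuation-and-composition argument.
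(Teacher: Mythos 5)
Your proposal is correct and follows essentially the same route as the paper, which proves this corollary only by the one-line remark that the conservation law of the Benjamin--Ono hierarchy at the $H^{3}$ level (available since $c_{1}=c_{2}=\sqrt{3}/2$ makes (\ref{BO1}) the second hierarchy flow) yields a uniform-in-time $H^{3}$ bound, to be combined with the fact that the local existence time in Theorem \ref{main} depends only on $\|\vp\|_{H^{s_{0}}}$ with $s_{0}=3$. Your Steps 1--3 simply flesh out that continuation-and-composition argument, including the density/continuous-dependence justification of the conservation law for $H^{3}$ data, in more detail than the paper itself provides.
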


This paper is organized as follows.
In Section 2, we fix some notations and state a number of estimates.
We also obtain a solution of the regularlized equation associated to (\ref{BO1}).
In Section 3, we give an {\it a priori} estimate for the solution to (\ref{BO1}).
In Section 4, we show the existence of the solution, uniqueness, the persistence, and the continuous dependence.

%%%%%%%%%%%%%%%%%%%%%%%%%%%%%%%%%%%%%
%%%%%%%%%%%%%%%%%%%%%%%%%%%%%%%%%%%%%
%%%%%%%%%%%%%%%   section 2   %%%%%%%%%%%%%%
%%%%%%%%%%%%%%%%%%%%%%%%%%%%%%%%%%%%%
%%%%%%%%%%%%%%%%%%%%%%%%%%%%%%%%%%%%%
\section{Notations, Preliminaries and Parabolic Regularization}

In this section, we give some notations and collect a number of estimates which will be used throughout this paper.
We denote the norm in $L^{p}(\T)$ by $\|\cdot\|_{p}$.
In particular, we simply write $\|\cdot\|:=\|\cdot\|_{2}$.
We denote $\|f\|_{H^{s}}:=2^{-1/2}(\|f\|^{2}+\|D^{s}f\|^{2})^{1/2}$ for a function $f$ and $s\ge0$,
where $D=\F^{-1}|\xi|\F$.
Let $\LR{\cdot,\cdot}:=\LR{\cdot,\cdot}_{L^{2}}$.
We also use the same symbol for $\LR{\cdot}:=(1+|\cdot|^{2})^{1/2}$.
Let $[A,B]=AB-BA$.

We use the following Gagliardo-Nirenberg inequality on the torus:

\begin{lem}\label{G.N.}
Assume that $l\in\mathbb{N}\cup \{0\}$ and $s\ge1$ satisfy $l\le s-1$ and a real number $p$ satisfies $2\le p\le\I$.
Put $\al=(l+1/2-1/p)/s$.
Then, we have
\begin{equation*}
\|\ds^{l}f\|_{p}\lesssim
\begin{cases}
\|f\|^{1-\al}\|D^{s}f\|^{\al}\quad (when\quad 1\le l\le s-1),\\
\|f\|^{1-\al}\|D^{s}f\|^{\al}+\|f\|\quad (when\quad l=0),
\end{cases}
\end{equation*}
for any $f\in H^{s}(\T)$.
\end{lem}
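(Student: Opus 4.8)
The plan is to prove this Gagliardo--Nirenberg inequality by a Littlewood--Paley decomposition combined with Bernstein's inequality on the torus, and then to sum the resulting dyadic series at an optimally chosen threshold. First I would reduce to mean-zero functions. Since $\ds^{l}$ annihilates the zero Fourier mode whenever $l\ge1$, I may replace $f$ by its mean-zero part $\ti f:=f-(2\pi)^{-1}\int_{\T}f\,dx$ without changing $\ds^{l}f$ or $\|D^{s}f\|$, while $\|\ti f\|\le\|f\|$; so the case $1\le l\le s-1$ reduces to mean-zero $f$. When $l=0$ I would instead write $f=(2\pi)^{-1}\int_{\T}f\,dx+\ti f$ and estimate the constant part by $|(2\pi)^{-1}\int_{\T}f\,dx|\lec\|f\|$ --- this is precisely the origin of the extra $\|f\|$ term in the second case --- and then again bound $\|\ti f\|_{p}$ for the mean-zero remainder. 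A useful byproduct of restricting to mean-zero functions is the Poincaré-type bound $\|f\|\le\|D^{s}f\|$, which holds because the smallest nonzero frequency is $1$ and $s\ge1$.

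Next I would introduce the Littlewood--Paley projections $P_{N}$, $N\in\{1,2,4,\dots\}$, onto frequencies $|k|\sim N$, so that a mean-zero $f$ equals $\sum_{N}P_{N}f$. Bernstein's inequality on $\T$ --- which follows from Parseval's identity, the Cauchy--Schwarz inequality applied to the $O(N)$ frequencies in the support of $\widehat{P_{N}f}$, and the elementary bound $\|h\|_{p}\le\|h\|_{\I}^{1-2/p}\|h\|^{2/p}$ valid for $p\ge2$ --- gives $\|\ds^{l}P_{N}f\|_{p}\lec N^{l+1/2-1/p}\|P_{N}f\|$. Writing $a_{N}:=\|P_{N}f\|$, one has $\sum_{N}a_{N}^{2}\sim\|f\|^{2}$ and $\sum_{N}N^{2s}a_{N}^{2}\sim\|D^{s}f\|^{2}$, so summing in $N$ reduces the claim to estimating $\sum_{N}N^{\be}a_{N}$ with $\be:=l+1/2-1/p$.

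The main step is then to sum this series at a threshold. The hypotheses $l\le s-1$ and $l\ge1$ (or $l=0$ with $p>2$; the remaining case $l=0$, $p=2$ is trivial since then $\al=0$) guarantee $0<\be<s$. For any dyadic $M\ge1$ I would split the sum at $N=M$ and apply the Cauchy--Schwarz inequality on each piece, using the geometric sums $\sum_{N\le M}N^{2\be}\lec M^{2\be}$ (valid since $\be>0$) and $\sum_{N>M}N^{2(\be-s)}\lec M^{2(\be-s)}$ (valid since $\be-s<0$), to obtain $\|\ds^{l}f\|_{p}\lec M^{\be}\|f\|+M^{\be-s}\|D^{s}f\|$. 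Choosing $M\sim(\|D^{s}f\|/\|f\|)^{1/s}$ balances the two terms; this $M$ is $\ge1$ exactly because of the Poincaré bound above, so the choice is legitimate among dyadic scales, and it yields $\|\ds^{l}f\|_{p}\lec\|f\|^{1-\al}\|D^{s}f\|^{\al}$ with $\al=\be/s$, as claimed.

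I expect the only genuinely delicate point to be the bookkeeping around the zero Fourier mode: it is what forces the additive $\|f\|$ in the $l=0$ statement, and passing to the mean-zero part is what makes the Poincaré inequality --- hence the admissibility of the threshold $M\ge1$ and the whole optimization --- available. Everything else is routine dyadic summation and the standard Bernstein estimate.
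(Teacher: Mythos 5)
Your proof is correct, but it takes a genuinely different route from the paper. The paper does not argue from scratch at all: it cites the classical integer-order Gagliardo--Nirenberg inequality on the torus (Section 2 of Schwarz Jr.) and then passes to non-integer $s$ by H\"older's inequality on the Fourier side (interpolating $\|D^{m}f\|$ between $\|f\|$ and $\|D^{s}f\|$ for integer $m\le s$). You instead give a self-contained dyadic argument: reduce to mean-zero $f$ (which is exactly the right way to isolate the extra $+\|f\|$ in the $l=0$ case and to secure the Poincar\'e bound $\|f\|\le\|D^{s}f\|$ needed to make the threshold $M\ge1$ admissible), apply Bernstein's inequality $\|\ds^{l}P_{N}f\|_{p}\lec N^{l+1/2-1/p}\|P_{N}f\|$ blockwise, and optimize the split of $\sum_{N}N^{\be}a_{N}$ at $M\sim(\|D^{s}f\|/\|f\|)^{1/s}$; the hypotheses $1\le l\le s-1$ and $p\ge2$ enter precisely to guarantee $0<\be<s$ so that both geometric tails converge, and you correctly dispose of the degenerate case $l=0$, $p=2$ separately. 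What your approach buys is uniformity in real $s\ge1$ with no case distinction between integer and non-integer order and no external reference; what the paper's approach buys is brevity, since it leans on a known result. Both are valid; yours could replace the paper's proof verbatim if one wanted the lemma to be self-contained.
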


\begin{proof}
In the case $s$ is an integer, see Section 2 in \cite{SchwarzJr.}.
The general case follows from the integer case and the H\"older inequality.
\end{proof}

The following inequality is helpful when we estimate the difference between two solutions in $L^{2}$.

\begin{lem}\label{freq.est.}
Let $k\in\N\cup\{0\}$.
Then the following inequality holds true:
\[\|\HT\ds^{k}f+\LR{D}^{-1}\ds^{k+1}f\|\le\|f\|_{H^{k-1}}\]
for any $f\in H^{k-1}(\T)$.
\end{lem}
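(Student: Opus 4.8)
The plan is to diagonalize the operator $\HT\ds^{k}+\LR{D}^{-1}\ds^{k+1}$ in Fourier series and to reduce the inequality to a pointwise bound on its symbol. Since everything is a Fourier multiplier, Plancherel's theorem will do the rest.

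First I would record the Fourier coefficients: for $n\in\Z$,
\[\ha{\HT\ds^{k}f}(n)=-i\sgn(n)(in)^{k}\ha f(n)\ \ (n\neq0),\qquad \ha{\LR{D}^{-1}\ds^{k+1}f}(n)=\frac{(in)^{k+1}}{\LR{n}}\ha f(n),\]
and both vanish at $n=0$ (the second because $k+1\ge1$). Hence $\HT\ds^{k}f+\LR{D}^{-1}\ds^{k+1}f$ is the multiplier operator with $m(0)=0$ and, for $n\neq0$,
\[m(n)=(in)^{k}\left(-i\sgn(n)+\frac{in}{\LR{n}}\right)=i(in)^{k}\,\sgn(n)\,\frac{|n|-\LR{n}}{\LR{n}}.\]
The key algebraic step is to rationalize $|n|-\LR{n}=\dfrac{n^{2}-(1+n^{2})}{|n|+\LR{n}}=\dfrac{-1}{|n|+\LR{n}}$, which makes the cancellation between $\HT\ds^{k}$ and $-\LR{D}^{-1}\ds^{k+1}$ explicit and gives
\[|m(n)|=\frac{|n|^{k}}{\LR{n}\,(|n|+\LR{n})}\qquad(n\neq0).\]

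Next, since $|n|+\LR{n}\ge2|n|$ and $\LR{n}\ge1$, I obtain the pointwise bound
\[|m(n)|^{2}\le\frac{|n|^{2k}}{4n^{2}\LR{n}^{2}}\le\frac14\,|n|^{2(k-1)}\le\frac12\bigl(1+|n|^{2(k-1)}\bigr)\]
for all $n\neq0$, together with $m(0)=0$. Summing against $|\ha f(n)|^{2}$ and invoking Plancherel's theorem with $\|f\|_{H^{k-1}}^{2}=\frac12\sum_{n\in\Z}\bigl(1+|n|^{2(k-1)}\bigr)|\ha f(n)|^{2}$ then yields
\[\|\HT\ds^{k}f+\LR{D}^{-1}\ds^{k+1}f\|^{2}=\sum_{n\in\Z}|m(n)|^{2}|\ha f(n)|^{2}\le\|f\|_{H^{k-1}}^{2},\]
which is the claim.

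I do not expect a genuine obstacle here: once the symbol has been simplified the estimate is elementary. The only points requiring a little care are the algebraic simplification exhibiting the cancellation, and the bookkeeping at the zero Fourier mode, which contributes nothing to either side — so that the case $k=0$, where $|n|^{2(k-1)}=|n|^{-2}$, causes no difficulty.
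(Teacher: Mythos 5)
Your proof is correct and follows essentially the same route as the paper: identify the operator as a Fourier multiplier, bound its symbol pointwise, and conclude by Plancherel. Your explicit rationalization $|n|-\LR{n}=-1/(|n|+\LR{n})$ in fact yields a slightly sharper pointwise bound on the symbol than the one the paper states, and your bookkeeping of the zero mode and of the factor $2^{-1/2}$ in the paper's $H^{s}$ normalization is careful and consistent.
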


\begin{proof}
We have $|\sgn(\xi)-\xi\LR{\xi}^{-1}|
  \le\LR{\xi}^{-1}$ for any $\xi\in\Z,$
which shows that
\EQQS{
\|\HT\ds^{k}f+\LR{D}^{-1}\ds^{k+1}f\|
=\|(\sgn(\xi)-\xi\LR{\xi}^{-1})\xi^{k}\hat{f}(\xi)\|_{l^{2}}\le\|f\|_{H^{k-1}}
}
as desired.
\end{proof}

%%%%%%%%%%%%%%%%%%%%%%%%%%%%%%%%%%%%%
%%%%%%%%%%%%%%%%%%%%%%%%%%%%%%%%%%%%%
%%%%%%%%%%%%%%%   Commutator estimates   %%%%%%%%%%%%%%
%%%%%%%%%%%%%%%%%%%%%%%%%%%%%%%%%%%%%
%%%%%%%%%%%%%%%%%%%%%%%%%%%%%%%%%%%%%

\begin{defn}
For $s\ge0$ and functions $u,v$ defined on $\T$, we define
\EQQS{
&P_{s}(f,g):=D^{s}\ds(f\ds g)-D^{s}\ds f\ds g-\ds fD^{s}\ds^{2}g-(s+1)\ds fD^{s}\ds g,\\
&Q_{s}(f,g):=\HT D^{s}\ds(f\ds g)-(\HT D^{s}\ds f)\ds g-\ds f\HT D^{s}\ds^{2}g-(s+1)\ds f\HT D^{s}\ds g.
}
\end{defn}

We introduce several commutator estimates.
For general theory on the real line, see \cite{FGO}.
We shall use extensively the following commutator estimate.

\begin{lem}\label{comm.est.}
Let $s\ge1$ and $s_{0}>5/2$.
Then there exists $C=C(s,s_{0})>0$ such that for any $f,g\in H^{s}(\T)\cap H^{s_{0}}(\T)$,
\EQQS{
\|P_{s}(f,g)\|,\|Q_{s}(f,g)\|
\le C(\|f\|_{H^{s_{0}}}\|g\|_{H^{s}}+\|f\|_{H^{s}}\|g\|_{H^{s_{0}}}).
}
\end{lem}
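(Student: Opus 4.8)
The plan is to prove the estimate for $P_s$ and $Q_s$ simultaneously, since the two differ only by an extra Hilbert transform $\HT$ on the appropriate terms, and $\HT$ is bounded on $L^2$ and commutes with $D^s$ and $\ds$. The strategy is the standard one for commutator-type estimates: pass to the Fourier side, write $D^s\ds(f\ds g)$ as a sum over frequencies $\xi = \xi_1 + \xi_2$ of $i\xi\,|\xi|^s\,\widehat f(\xi_1)\,(i\xi_2)\,\widehat g(\xi_2)$, and subtract off the three explicitly written ``leading'' terms; the point is that $P_s(f,g)$ has Fourier symbol given by the difference $i\xi|\xi|^s(i\xi_2) - (\text{sum of the three subtracted symbols})$, and by a Taylor expansion of $\xi \mapsto \xi|\xi|^s$ around $\xi_2$ (or around $\xi_1$, depending on which frequency dominates) this symbol gains enough decay to absorb two derivatives.

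First I would split the frequency space into the standard Littlewood–Paley-type regions: (I) $|\xi_1| \lesssim |\xi_2|$ (high-high or low-high, $g$ carries the high frequency), and (II) $|\xi_2| \ll |\xi_1|$ ($f$ carries the high frequency). In region (I), I Taylor-expand $m(\xi) := \xi|\xi|^s$ about $\xi = \xi_2$: $m(\xi_1+\xi_2) = m(\xi_2) + \xi_1 m'(\xi_2) + \tfrac12 \xi_1^2 m''(\tilde\xi)$ for some intermediate $\tilde\xi$. The zeroth-order term $m(\xi_2)$ combined with the derivative $i\xi_2$ from $\ds g$ reproduces the term $\ds f\, D^s\ds^2 g$; the first-order term $\xi_1 m'(\xi_2) = (s+1)\xi_1|\xi_2|^s$ reproduces $(s+1)\ds f\, D^s\ds g$ (here one checks $m'(\xi_2) = (s+1)|\xi_2|^s$). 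What is left is the quadratic remainder, whose symbol is bounded by $|\xi_1|^2 |\xi_2|^{s-1}$ on this region, i.e. it costs $s-1$ derivatives on $g$ but now two full derivatives on $f$ — and since $s_0 > 5/2$ means $H^{s_0} \hookrightarrow$ has two derivatives controlled in $L^\infty$ (via Sobolev / Lemma \ref{G.N.}), this piece is bounded by $\|f\|_{H^{s_0}}\|g\|_{H^s}$ after summing the geometric series in the frequency gap and applying Cauchy–Schwarz / Young's inequality for convolutions in $\ell^2$. In region (II), $f$ is the high-frequency factor, so I instead expand about $\xi = \xi_1$; now the term $D^s\ds f\,\ds g$ is the one being subtracted off (the zeroth order term), and the remaining terms are lower order, bounded by $|\xi_2|$ or $|\xi_2|^2$ times $|\xi_1|^s$, which yields the bound $\|f\|_{H^s}\|g\|_{H^{s_0}}$ — here it is $g$ that is placed in $H^{s_0}$ so that two of its derivatives land in $L^\infty$. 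One must also be slightly careful near $\xi_2 = 0$ or $\xi_1 = 0$ because of the non-smoothness of $|\xi|^s$ at the origin, but in region (I) we have $|\xi_1| \lesssim |\xi_2|$ so $|\xi|$ and $|\xi_2|$ are comparable away from a bounded set of frequencies, and the bounded piece is trivially estimated; the symmetric remark applies in region (II).

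The main obstacle, and the place requiring the most care, is the bookkeeping in the ``transition'' region where $|\xi_1| \sim |\xi_2|$ (so $|\xi|$ can be much smaller than either, including the near-resonant set $\xi \approx 0$), because there the Taylor expansion around a single frequency is not obviously the efficient thing to do and the symbol $\xi|\xi|^s$ is small rather than large. In that regime, though, all four symbols in the definition of $P_s$ are individually bounded by $\lesssim |\xi_1|^{s}|\xi_2| \sim |\xi_1|^{s-1}|\xi_2|^2$-type quantities with both high frequencies comparable, so one estimates each of the four terms separately by Hölder, putting the factor with $\lesssim s$ derivatives in $L^2$ and the other (with $\le 2$ derivatives, hence in $L^\infty$ by $s_0 > 5/2$) in $L^\infty$; no cancellation is needed there. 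Finally, for $Q_s$ the identical argument goes through verbatim since inserting $\HT$ only multiplies the relevant Fourier symbols by $-i\sgn(\cdot)$, which is bounded by $1$ and does not affect any of the symbol estimates; I would simply remark this at the end. Throughout, the summation over the frequency gap produces convergent geometric series precisely because the remainder symbols decay like a negative power of $\max(|\xi_1|,|\xi_2|)/\min$, and the two resulting $\ell^2$ convolution sums are handled by Young's inequality, giving the product of an $H^{s_0}$ norm and an $H^s$ norm in each of the two orderings, which is exactly the claimed bound.
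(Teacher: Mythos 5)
Your proposal is correct and follows essentially the same route as the paper: reduce to a pointwise bound on the symbol $\xi|\xi|^{s}\eta-(\xi-\eta)|\xi-\eta|^{s}\eta-\cdots$, split frequencies into the three regions ($f$ dominant, $g$ dominant, comparable), use the mean value theorem resp.\ a second-order Taylor expansion of $x\mapsto x|x|^{s}$ in the first two regions and trivial term-by-term bounds in the third, then conclude by Young's inequality using $s_{0}>5/2$ to put two derivatives of the low-frequency factor in $\ell^{1}$. The only point worth tightening is the claim that $Q_{s}$ follows ``verbatim'': the $\sgn$ factors sit at different frequencies in different terms, and the clean way to see nothing changes is the identity $\HT D^{s}\ds=D^{s+1}$ (so one Taylor-expands $|x|^{s+1}$ instead of $x|x|^{s}$), which is exactly the paper's one-line remark $D=\HT\ds$.
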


\begin{proof}
We show only the inequality for $P_{s}(f,g)$ with $s>1$.
The case $s=1$ follows from Lemma \ref{comm.est.4}.
Another one follows from a similar argument since $D=\HT\ds$.
It suffices to show that there exists $C=C(s)$ such that
\EQS{\label{eq2.1}
\begin{aligned}
&||\xi|^{s}\xi\eta-|\xi-\eta|^{s}(\xi-\eta)\eta-|\eta|^{s}\eta^{2}-(s+1)(\xi-\eta)|\eta|^{s}\eta|\\
&\le C(|\xi-\eta|^{s}|\eta|^{2}+|\xi-\eta|^{2}|\eta|^{s})
\end{aligned}
}
for any $\xi,\eta\in\Z$.
We split the summation region into three regions:
$R_{1}=\{3|\eta|\le|\xi-\eta|\}$, $R_{2}=\{|\eta|\ge3|\xi-\eta|\}$
and $R_{3}=\{|\xi-\eta|/4\le|\eta|\le4|\xi-\eta|\}$.
On $R_{1}$, the mean value theorem shows that (\ref{eq2.1}) holds.
%we have $|\xi-\eta|\sim|\xi|$.
%Observe that $|\eta|^{s}\eta^{2},(\xi-\eta)|\eta|^{s}\eta\lesssim|\xi-\eta|^{2}|\eta|^{s}$.
%The mean value theorem shows that there exists $\tilde{\xi}\in(\xi,\xi-\eta)$ or $\tilde{\xi}\in(\xi-\eta,\xi)$ such that
%\[\frac{|\xi|^{s}\xi-|\xi-\eta|^{s}(\xi-\eta)}{\xi-(\xi-\eta)}=(s+1)|\tilde{\xi}|^{s}.\]
%This together with the fact that $|\tilde{\xi}|\sim|\xi-\eta|\sim|\xi|$ implies that
%\[||\xi|^{s}\xi\eta-|\xi-\eta|^{s}(\xi-\eta)\eta|\lesssim|\xi-\eta|^{s}|\eta|^{2}.\]
On $R_{2}$, note that $|\xi|\sim|\eta|$.
It immediately follows that $|\xi-\eta|^{s}(\xi-\eta)\eta\lesssim|\xi-\eta|^{s}|\eta|^{2}$.
Set $\sigma(x)=x|x|^{s}$ for $x\in\mathbb{R}$.
Note that $\sigma\in C^{2}(\R)$.
%In particular, $\sigma'(x)=(s+1)|x|^{s}$ and $\sigma''(x)=s(s+1)\sgn(x)|x|^{s-1}$.
The Taylor theorem shows that there exist $\tilde{\eta}\in(\xi,\eta)$ or $\tilde{\eta}\in(\eta,\xi)$ such that
\[\sigma(\xi)=\sigma(\eta)+\sigma'(\eta)(\xi-\eta)+\frac{\sigma''(\tilde{\eta})}{2}(\xi-\eta)^{2}.\]
This together with the fact that $|\tilde{\eta}|\sim|\xi|\sim|\eta|$ implies that (\ref{eq2.1}) holds.
%\[||\xi|^{s}\xi\eta-|\eta|^{s}\eta^{2}-(s+1)(\xi-\eta)|\eta|^{s}\eta|\lesssim |\eta|^{s}|\xi-\eta|^{2}.\]
On $R_{3}$, it is obvious.
\end{proof}

\begin{lem}\label{comm.est.2}
Let $s\ge1$, $s_{0}>1/2$ and $\La_{s}=D^{s}$ or $D^{s-1}\ds$.
Then we have the following:\\
(i) There exists $C(s,s_{0})>0$ such that for any $f,g\in H^{s_{0}+1}(\T)\cap H^{s}(\T)$,
\[\|[\La_{s},f]\ds g\|\lesssim\|f\|_{H^{s_{0}+1}}\|g\|_{H^{s}}+\|f\|_{H^{s}}\|g\|_{H^{s_{0}+1}}.\]
(ii) There exists $C(s_{0})>0$ such that for any $f\in H^{s_{0}+1}(\T)$ and $g\in L^{2}(\T)$,
\[\|[\LR{D}^{-1}\La_{2},f]g\|\lesssim\|f\|_{H^{s_{0}+1}}\|g\|.\]
\end{lem}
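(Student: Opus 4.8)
The plan is to reduce both inequalities to pointwise estimates on Fourier multipliers, exactly as in the proof of Lemma \ref{comm.est.}, and then to organize the frequency interactions into a high-low/low-high/high-high splitting. For part (i), writing out the commutator on the Fourier side, the symbol of $[\La_s,f]\ds g$ at output frequency $\xi$ is
\EQQS{
\sum_{\y}\bigl(m_s(\xi)-m_s(\xi-\y)\bigr)\,\ha f(\y)\,\y\,\ha g(\xi-\y),
}
where $m_s(\xi)=|\xi|^s$ or $|\xi|^{s-1}\xi$. The key pointwise bound is
\EQQS{
|m_s(\xi)-m_s(\xi-\y)|\lesssim |\y|\,\bigl(|\y|^{s-1}+|\xi-\y|^{s-1}\bigr),
}
valid for all $\xi,\y\in\Z$: when $|\y|\le\tfrac12|\xi-\y|$ this follows from the mean value theorem applied to $m_s$ (which is $C^1$ with $|m_s'(\z)|\sim|\z|^{s-1}$) since the intermediate point has size $\sim|\xi-\y|$, and when $|\y|\gtrsim|\xi-\y|$ one simply estimates each term separately using $|\xi|\lesssim|\y|$. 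Splitting the factor $|\y|^{s-1}+|\xi-\y|^{s-1}$ accordingly distributes the $s$ derivatives either onto $f$ or onto $g$; the remaining $|\y|$ from $\ds g$ combined with the leftover low-frequency factor is absorbed by one $H^{s_0+1}$ norm, using $s_0>1/2$ so that $\ell^1$ summation in the low frequency is controlled by an $\ell^2$ norm at regularity $>1/2$ (a Cauchy–Schwarz / Young's inequality argument, i.e. $\|\LR{\y}^{1/2+}\ha h(\y)\|_{\ell^1}\lesssim\|h\|_{H^{s_0+1}}$ when the other derivative lands on that factor). This yields the two terms $\|f\|_{H^{s_0+1}}\|g\|_{H^s}$ and $\|f\|_{H^s}\|g\|_{H^{s_0+1}}$.

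For part (ii) the multiplier of $[\LR{D}^{-1}\La_2,f]g$ at output frequency $\xi$ is
\EQQS{
\sum_{\y}\Bigl(\tfrac{n(\xi)}{\LR{\xi}}-\tfrac{n(\xi-\y)}{\LR{\xi-\y}}\Bigr)\,\ha f(\y)\,\ha g(\xi-\y),
}
with $n(\xi)=|\xi|^2$ or $|\xi|\xi$. Here $\LR{D}^{-1}\La_2$ has a bounded symbol of order $1$, and the relevant estimate is
\EQQS{
\Bigl|\tfrac{n(\xi)}{\LR{\xi}}-\tfrac{n(\xi-\y)}{\LR{\xi-\y}}\Bigr|\lesssim \LR{\y},
}
which follows from the mean value theorem since the function $\z\mapsto n(\z)/\LR{\z}$ is Lipschitz on $\R$ (its derivative is bounded). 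Thus the whole commutator behaves like a zeroth-order operator after paying $\LR{\y}$ on the $f$-side, and then $\|\LR{\y}\ha f(\y)\|_{\ell^1}\lesssim\|f\|_{H^{s_0+1}}$ for $s_0>1/2$ gives $\|[\LR{D}^{-1}\La_2,f]g\|\lesssim\|f\|_{H^{s_0+1}}\|g\|$.

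I expect the main obstacle to be bookkeeping in part (i) rather than anything deep: one must be careful that in the high-high regime $|\y|\sim|\xi-\y|\gg|\xi|$ the difference of multipliers does not actually decay, so the naive mean value bound fails and one instead bounds the two pieces $m_s(\xi)$ and $m_s(\xi-\y)$ individually; there, $|m_s(\xi)|\lesssim|\xi|^s\lesssim|\y|^s$ is harmless, but the term $m_s(\xi-\y)\y\,\ha g(\xi-\y)$ needs the full $s$ derivatives on $g$ together with the low-frequency bound on $f$ at level $\ell^1$ in a regularity $>1/2$ — so it is again the $s_0>1/2$ hypothesis that does the work. Once this case division is set up, each term is estimated by Young's and Cauchy–Schwarz inequalities exactly as in Lemma \ref{comm.est.}, and the two $\La_s$ choices are handled identically since $D^{s-1}\ds=\HT D^s$ and all estimates only used $|m_s(\xi)|=|\xi|^s$.
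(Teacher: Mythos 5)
Your proposal is correct and takes essentially the same route as the paper: for (i) the paper simply points back to the proof of Lemma \ref{comm.est.}, i.e.\ reduction to a pointwise symbol inequality handled by the mean value theorem in the low-high/high-low regions and by crude termwise bounds in the high-high region, which is exactly your argument; for (ii) the paper likewise reduces to the Lipschitz bound $|\xi^{2}\LR{\xi}^{-1}-\eta^{2}\LR{\eta}^{-1}|\lesssim|\xi-\eta|$ via the boundedness of the derivative of $x\mapsto x^{2}\LR{x}^{-1}$. (One notational slip: in your Fourier-side expression the factor contributed by $\ds g$ should be $\xi-\eta$, not $\eta$, but your subsequent bookkeeping --- $s$ derivatives on one factor, one derivative plus the $\ell^{1}$ summation absorbed by the $H^{s_{0}+1}$ norm on the other --- is the correct one, so the argument closes.)
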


\begin{proof}
We omit the proof of the $(i)$ since it is identical with that of the previous Lemma.
We show the case $(ii)$ with $\La_{2}=\ds^{2}$ only.
The other case follows from a similar argument.
It suffices to show that
$|\xi^{2}\LR{\xi}^{-1}-\eta^{2}\LR{\eta}^{-1}|\lesssim|\xi-\eta|$ for any $\xi,\eta\in\Z$.
%\[\left|\frac{-\xi^{2}}{(1+\xi^{2})^{1/2}}+\frac{\eta^{2}}{(1+\eta^{2})^{1/2}}\right|\lesssim|\xi-\eta|,
%  \quad\forall\xi,\eta\in\Z.\]
Set $\sigma(x)=-x^{2}\LR{x}^{-1}$ for $x\in\R$.
Note that $\sigma\in C^{1}(\R)$ and that $\sigma'(x)=-(x^{3}+2x)\LR{x}^{-3}$.
It then follows that there exists $C>0$ such that $|\sigma'(x)|\le C$ for any $x\in\R$.
This together with the mean value theorem implies that we have
\[|\sigma(\xi)-\sigma(\eta)|\le C|\xi-\eta|,\]
which completes the proof.
\end{proof}

The following estimate is essential for our analysis in the case $c_{1}\neq c_{2}$ in (\ref{BO1}).
For $L^{p}$ cases on the real line, see \cite{DMcP}.

\begin{lem}\label{comm.est.4}
Let $s_{0}>1/2$ and $k\in\N$.
Then, there exists $C=C(s_{0})>0$ such that for any $f\in H^{s_{0}}(\T)$ and $g\in L^{2}(\T)$
\[\|[\HT,f]\ds^{k}g\|\le C\|f\|_{H^{s_{0}+k}}\|g\|.\]
\end{lem}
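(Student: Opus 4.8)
The plan is to reduce the estimate to a Fourier-side kernel bound and then apply a Schur-type argument. Writing $h = \ds^k g$, we have $\F([\HT,f]h)(\xi) = (2\pi)^{-1/2}\sum_{\eta}\bigl(-i\sgn(\xi)+i\sgn(\eta)\bigr)\ha{f}(\xi-\eta)\ha{h}(\eta)$. The symbol $m(\xi,\eta) := \sgn(\xi)-\sgn(\eta)$ vanishes unless $\xi$ and $\eta$ have opposite signs (or one of them is zero), in which case $|\xi-\eta| = |\xi|+|\eta| \ge \max(|\xi|,|\eta|)$; in particular $|m(\xi,\eta)| \le 2$ and $m(\xi,\eta) \ne 0$ forces $\LR{\xi-\eta} \gtrsim \LR{\xi}$. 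This is the key gain: the commutator structure converts the singular multiplier $\sgn$ into something controlled by the (large) frequency of $f$.

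Next I would absorb the $k$ derivatives. Since $\ha{h}(\eta) = (i\eta)^k \ha{g}(\eta)$ and, on the support of $m$, $|\eta| \le |\xi-\eta|$, we get $|\eta|^k |m(\xi,\eta)| \lesssim |\xi-\eta|^k \lesssim \LR{\xi-\eta}^k$. Therefore
\EQQS{
|\F([\HT,f]\ds^k g)(\xi)| \lesssim \sum_{\eta} \LR{\xi-\eta}^{k}\,|\ha{f}(\xi-\eta)|\,|\ha{g}(\eta)|
 \lesssim \bigl(G * \ha{g}\bigr)(\xi),
}
where $G(\zeta) := \LR{\zeta}^{k}|\ha{f}(\zeta)|$. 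By Young's (or Minkowski's) inequality for convolutions on $\Z$, $\|G*\ha g\|_{l^2} \le \|G\|_{l^1}\|\ha g\|_{l^2}$, and by Plancherel this gives $\|[\HT,f]\ds^k g\| \lesssim \|G\|_{l^1}\|g\|$. Finally, Cauchy–Schwarz yields $\|G\|_{l^1} = \sum_\zeta \LR{\zeta}^{-s_0}\LR{\zeta}^{s_0+k}|\ha f(\zeta)| \le \bigl(\sum_\zeta \LR{\zeta}^{-2s_0}\bigr)^{1/2}\|f\|_{H^{s_0+k}}$, and the first factor is finite precisely because $s_0 > 1/2$. This produces the claimed bound with $C = C(s_0)$.

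The only genuinely substantive point is the opposite-sign observation $m(\xi,\eta)\ne 0 \Rightarrow |\xi-\eta| \ge \max(|\xi|,|\eta|)$, which simultaneously handles the singularity of $\sgn$ and swallows the $k$ derivatives; everything after that is routine convolution bookkeeping. One should be slightly careful with the boundary cases where $\xi = 0$ or $\eta = 0$ (recall $\ha{\HT f}(0) = 0$), but these only make $m$ smaller or the relevant term vanish, so they cause no trouble; I would dispatch them in a single sentence. No smoothness of $f$ beyond $H^{s_0+k}$ is used, and $g \in L^2$ is all that is needed, matching the statement.
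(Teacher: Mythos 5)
Your proof is correct and follows essentially the same route as the paper: both reduce the estimate to the pointwise symbol bound $|\sgn(\xi)-\sgn(\eta)|\,|\eta|^{k}\lesssim|\xi-\eta|^{k}$ and then conclude by the standard convolution/Young/Cauchy--Schwarz argument using $s_{0}>1/2$. Your verification of that bound (the observation that $\sgn(\xi)\neq\sgn(\eta)$ forces $|\xi-\eta|=|\xi|+|\eta|\ge\max(|\xi|,|\eta|)$) is in fact a slightly cleaner version of the paper's three-region case split, but it is the same underlying idea.
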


\begin{proof}
It suffices to show that
\EQS{\label{con.H.comm.}
|\sgn(\xi)-\sgn(\eta)||\eta|^{k}\lesssim|\xi-\eta|^{k}
}
for any $\xi,\eta\in\Z$.
We split the summation region into three regions:
$R_{1}=\{3|\eta|\le|\xi|\}$, $R_{2}=\{|\eta|\ge3|\xi|\}$
and $R_{3}=\{|\xi|/4\le|\eta|\le4|\xi|\}$.
It is clear that (\ref{con.H.comm.}) holds on $R_{1}$ and $R_{2}$.
It is also clear that (\ref{con.H.comm.}) holds when $\xi\eta>0$.
Therefore, we consider the region $R_{3}\cap\{\xi\eta\le0\}$.
We first assume that $\xi\ge0$ and $\eta\le0$.
Note that $|\xi-\eta|\ge|\xi|\ge|\eta|/4$.
Similarly, in the case $\xi\le0$ and $\eta\ge0$ we have $|\xi-\eta|\ge|\eta|$.
Therefore, we have (\ref{con.H.comm.}), which concludes the proof.
\end{proof}

\begin{lem}\label{reduction}
Let $s_{0}>1/2$ and $u,v$ be sufficiently smooth function defined on $\T$.
Then there exists $C=C(s_{0})>0$ such that
\[|\LR{v\HT\ds^{2}u+\ds v\HT\ds u,u}|\le C\|v\|_{H^{s_{0}+2}}\|u\|^{2}.\]
\end{lem}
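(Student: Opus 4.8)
The plan is to turn the apparently second-order expression into a perfect $x$-derivative, integrate by parts once, and then exploit the skew-adjointness of $\HT$ to symmetrize the remaining term into a commutator that recovers the lost derivatives, after which everything is controlled by Lemma~\ref{comm.est.4}.

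First, since $\ds$ and $\HT$ commute,
\[
v\HT\ds^{2}u+\ds v\HT\ds u=\ds\bigl(v\HT\ds u\bigr),
\]
so integration by parts on $\T$ gives
\[
I:=\LR{v\HT\ds^{2}u+\ds v\HT\ds u,u}=-\LR{v\HT\ds u,\ds u}.
\]
The presence of $\HT$ blocks a further direct integration by parts in $\LR{v\HT\ds u,\ds u}$, so the next step is to symmetrize: writing $[\HT,v]\ds u=\HT(v\ds u)-v\HT\ds u$ and using $\HT^{*}=-\HT$ together with the trivial identity $\LR{v\ds u,\HT\ds u}=\LR{v\HT\ds u,\ds u}$, one finds $\LR{[\HT,v]\ds u,\ds u}=-2\LR{v\HT\ds u,\ds u}$, hence $I=\frac{1}{2}\LR{[\HT,v]\ds u,\ds u}$.

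The point of this rewriting is that $[\HT,v]$ is smoothing. Integrating by parts once more and using $[\ds,[\HT,v]]=[\HT,\ds v]$ (a consequence of $[\ds,\HT]=0$ and $[\ds,v]=\ds v$), we obtain $\ds[\HT,v]\ds u=[\HT,v]\ds^{2}u+[\HT,\ds v]\ds u$, so that
\[
I=-\frac{1}{2}\LR{[\HT,v]\ds^{2}u,u}-\frac{1}{2}\LR{[\HT,\ds v]\ds u,u}.
\]
Both inner products are then estimated by the Cauchy--Schwarz inequality together with Lemma~\ref{comm.est.4}, applied with $k=2$ and $f=v$ to the first term and with $k=1$ and $f=\ds v$ to the second:
\[
|I|\le\frac{1}{2}\bigl(\|[\HT,v]\ds^{2}u\|+\|[\HT,\ds v]\ds u\|\bigr)\|u\|\lesssim\bigl(\|v\|_{H^{s_{0}+2}}+\|\ds v\|_{H^{s_{0}+1}}\bigr)\|u\|^{2}\lesssim\|v\|_{H^{s_{0}+2}}\|u\|^{2},
\]
which is the claimed bound.

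The crux — and the only place that needs an idea rather than a routine manipulation — is recognizing the perfect-derivative identity in the first step and then carrying out the symmetrization in the second: together they convert the loss of two derivatives in $\LR{v\HT\ds u,\ds u}$ into a commutator that gains exactly two derivatives, and only because of this can Lemma~\ref{comm.est.4} close the estimate. Everything after that is elementary.
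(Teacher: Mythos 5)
Your proof is correct and follows essentially the same route as the paper: both arguments exploit the skew-adjointness of $\HT$ to rewrite the pairing as $-\tfrac{1}{2}\LR{[\HT,v]\ds^{2}u,u}$ plus a harmless lower-order remainder, and then close the estimate with Lemma~\ref{comm.est.4}. The only (cosmetic) difference is the form of that remainder — you produce $[\HT,\ds v]\ds u$, handled by Lemma~\ref{comm.est.4} with $k=1$, whereas the paper's identity leaves $\ds^{2}v\,\HT u$, handled by H\"older and Sobolev embedding.
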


\begin{proof}
This follows from the equality
\[2\LR{v\HT\ds^{2}u+\ds v\HT\ds u,u}=-\LR{[\HT,v]\ds^{2}u,u}-\LR{\ds^{2}v\HT u,u}\]
together with Lemma \ref{comm.est.4}.
\end{proof}

We shall also use extensively the following estimate.

\begin{lem}\label{IBP}
Let $s_{0}>1/2$.
Then, there exists $C=C(s_{0})>0$ such that for any $f\in H^{s_{0}+1}(\T)$ and $g\in H^{1}(\T)$
\[|\LR{f\ds g,g}|\le C\|f\|_{H^{s_{0}+1}}\|g\|^{2}.\]
\end{lem}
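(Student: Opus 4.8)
The plan is to reduce the bilinear expression to a single-derivative-on-$f$ form by integration by parts, and then close with Sobolev embedding. Since the functions occurring in \eqref{BO1} are real valued, I would first write
\[
\LR{f\ds g,g}=\int_{\T}f(\ds g)g\,dx=\frac{1}{2}\int_{\T}f\,\ds(g^{2})\,dx
=-\frac{1}{2}\int_{\T}(\ds f)g^{2}\,dx ,
\]
where the last step is the (legitimate, since everything is periodic) integration by parts moving the derivative off $g^{2}$ and onto $f$. (If one insists on allowing complex valued $g$, the same computation applied to $|g|^{2}$ bounds $\RE\LR{f\ds g,g}$; in our application this is all that is needed, but for the stated real-valued setting no such remark is required.)

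Next I would estimate crudely:
\[
|\LR{f\ds g,g}|\le\frac{1}{2}\|\ds f\|_{\I}\,\|g\|^{2}.
\]
It then remains to control $\|\ds f\|_{\I}$ by $\|f\|_{H^{s_{0}+1}}$. For this I use the Sobolev embedding $H^{s_{0}}(\T)\hookrightarrow L^{\I}(\T)$, valid because $s_{0}>1/2$: by Cauchy--Schwarz,
\[
\|h\|_{\I}\le\sum_{k\in\Z}|\ha h(k)|
\le\Big(\sum_{k\in\Z}\LR{k}^{-2s_{0}}\Big)^{1/2}\Big(\sum_{k\in\Z}\LR{k}^{2s_{0}}|\ha h(k)|^{2}\Big)^{1/2}
\lesssim\|h\|_{H^{s_{0}}},
\]
the first factor being finite since $2s_{0}>1$. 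Applying this with $h=\ds f$ and using $\LR{\xi}^{s_{0}}|\xi|\lesssim\LR{\xi}^{s_{0}+1}$ on the Fourier side gives $\|\ds f\|_{\I}\lesssim\|\ds f\|_{H^{s_{0}}}\lesssim\|f\|_{H^{s_{0}+1}}$, which combined with the previous display yields the claim with $C=C(s_{0})$.

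There is essentially no serious obstacle here: the only points requiring a word of care are that the integration by parts is clean precisely because $\T$ is closed (no boundary terms) and $g$ is real, and that the threshold $s_{0}>1/2$ is exactly what makes $\sum_{k}\LR{k}^{-2s_{0}}<\I$, hence the embedding into $L^{\I}$. Everything else is routine.
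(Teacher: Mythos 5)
Your proof is correct and follows essentially the same route as the paper, which simply invokes integration by parts (giving $-\tfrac12\int(\partial_x f)g^2\,dx$) together with a density argument and the Sobolev embedding $H^{s_0}(\mathbb{T})\hookrightarrow L^\infty(\mathbb{T})$ for $s_0>1/2$. You have merely written out the details the paper leaves implicit.
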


\begin{proof}
This follows from the density argument and the integration by parts.
\end{proof}

%%%%%%%%%%%%%%%%%%%%%%%%%%%%%%%%%%%%%
%%%%%%%%%%%%%%%%%%%%%%%%%%%%%%%%%%%%%
%%%%%%%%%%%%%%%%    goood      %%%%%%%%%%%%%%%%%%%%%
%%%%%%%%%%%%%%%%%%%%%%%%%%%%%%%%%%%%%
%%%%%%%%%%%%%%%%%%%%%%%%%%%%%%%%%%%%%

The following lemma helps us calculate a correction term.

\begin{lem}\label{good1}
For sufficiently smooth functions $f,g$ and $h$ defined on $\T$, it holds that
\EQQS{
\LR{\ds^{3}fg,h}+\LR{f\ds^{3}g,h}+\LR{fg,\ds^{3}h}=3\LR{\ds f\ds g,\ds h}%,\\
%&\LR{\HT f\ds^{2}g,h}+\LR{f\HT\ds^{2}h,g}=-\LR{\ds^{2}f\HT h,g}-2\LR{\ds f\HT\ds h,g}
.
}
\end{lem}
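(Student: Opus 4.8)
The plan is to establish the identity by pure integration by parts on $\T$, using that the integral over $\T$ of the derivative of any smooth periodic function vanishes. I would apply this to $\ds^{3}(fgh)$, expanded by the Leibniz rule, and then clear the remaining lower-order terms by the same device one order down.

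Concretely, the three-function Leibniz formula gives
\[
\ds^{3}(fgh)=(\ds^{3}f)gh+f(\ds^{3}g)h+fg(\ds^{3}h)+3\,\Sigma+6\,(\ds f)(\ds g)(\ds h),
\]
where $\Sigma$ denotes the sum of the six ``type $(2,1,0)$'' products, i.e.\ those in which one factor carries $\ds^{2}$, one carries $\ds$, and one is untouched. Integrating over $\T$ annihilates the left-hand side, so
\[
\LR{(\ds^{3}f)g,h}+\LR{f(\ds^{3}g),h}+\LR{fg,\ds^{3}h}=-3\int_{\T}\Sigma\,dx-6\LR{(\ds f)(\ds g),\ds h}.
\]
To evaluate $\int_{\T}\Sigma\,dx$ I would integrate over $\T$ the three total derivatives $\ds\big((\ds f)(\ds g)h\big)$, $\ds\big((\ds f)g(\ds h)\big)$, $\ds\big(f(\ds g)(\ds h)\big)$ and add the results: the nine terms produced are exactly those of $\Sigma$ together with three copies of $(\ds f)(\ds g)(\ds h)$, whence $\int_{\T}\Sigma\,dx=-3\LR{(\ds f)(\ds g),\ds h}$. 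Substituting back leaves
\[
\LR{(\ds^{3}f)g,h}+\LR{f(\ds^{3}g),h}+\LR{fg,\ds^{3}h}=9\LR{(\ds f)(\ds g),\ds h}-6\LR{(\ds f)(\ds g),\ds h}=3\LR{(\ds f)(\ds g),\ds h},
\]
which is the assertion. An equivalent route is to integrate by parts directly inside each of the three brackets on the left and observe that every resulting second-derivative term cancels against a partner, again leaving $3\LR{(\ds f)(\ds g),\ds h}$.

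There is no real obstacle in this lemma: the computation is elementary, and the hypothesis that $f,g,h$ are sufficiently smooth (in fact $H^{3}(\T)$ would suffice) legitimizes every integration by parts, after which the identity extends by density to whatever regularity is needed later. The only points demanding a little care are getting the multinomial coefficients $1,3,6$ right in the Leibniz expansion and keeping track of signs when the total-derivative terms are discarded.
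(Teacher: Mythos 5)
Your computation is correct: the trinomial Leibniz coefficients $1,3,6$ are right, the six type-$(2,1,0)$ terms are exactly exhausted by the three total derivatives $\ds\big((\ds f)(\ds g)h\big)$, $\ds\big((\ds f)g(\ds h)\big)$, $\ds\big(f(\ds g)(\ds h)\big)$, and the resulting bookkeeping $9-6=3$ gives the stated identity; periodicity is all that is needed to discard the exact derivatives, and the real-valuedness of the functions in all later applications makes the $L^2$ pairing an honest integral. The paper itself gives no argument here --- it simply refers to Lemma 2.2 of Kenig--Pilod \cite{KePi16} --- so there is nothing to compare step by step; your proof supplies the elementary verification that the citation stands in for, and either route (the global expansion of $\ds^{3}(fgh)$ or the equivalent term-by-term integration by parts inside each bracket) is perfectly adequate.
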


\begin{proof}
See Lemma 2.2 in \cite{KePi16}.
%Notice that
%\[\LR{fg,\ds^{3}h}=-\LR{\ds^{3}(fg),h}
%  =-\LR{\ds^{3}fg,h}-\LR{f\ds^{3}g,h}-3\LR{\ds(\ds f\ds g),h},\]
%which shows the equality of the claim.
%The other one follows from the fact that $\HT^{*}=-\HT$.
\end{proof}

We shall repeatedly use estimates of the following type:

\begin{lem}\label{good2}
Let $s_{0}>5/2$.\\
(i) Let $s\ge1$.
There exists $C(s,s_{0})>0$ such that
for any $f_{1}\in H^{s}(\T)\cap H^{s_{0}}(\T)$ and $f_{2}\in H^{s+1}(\T)\cap H^{s_{0}}(\T)$,
\EQQS{
|\LR{f_{1}\HT D^{s}f_{2},\HT D^{s}(f_{1}\ds f_{2})}|
\le C(\|f_{1}\|_{H^{s_{0}}}^{2}\|f_{2}\|_{H^{s}}^{2}
      +\|f_{1}\|_{H^{s_{0}}}\|f_{1}\|_{H^{s}}\|f_{2}\|_{H^{s_{0}}}\|f_{2}\|_{H^{s}}).
}
(ii) Let $s\ge2$.
There exists $C(s,s_{0})>0$ such that
for any $f_{1}\in H^{s+1}(\T)\cap H^{s_{0}}(\T)$ and $f_{2}\in H^{s+2}(\T)\cap H^{s_{0}}(\T)$,
\EQQS{
&|\LR{f_{1}\HT D^{s}\ds(f_{1}\HT\ds f_{2}),D^{s-2}\ds f_{2}}|\\
&\le C(\|f_{1}\|_{H^{s_{0}}}^{2}\|f_{2}\|_{H^{s}}^{2}
      +\|f_{1}\|_{H^{s_{0}}}\|f_{1}\|_{H^{s}}\|f_{2}\|_{H^{s_{0}}}\|f_{2}\|_{H^{s}}).}
\end{lem}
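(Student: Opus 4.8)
The plan is to expand each inner product by writing $\HT D^s$ applied to a product as a "leading term plus commutator/remainder" decomposition, using the operators $P_s$ and $Q_s$ from the Definition, and then to control the leading terms by integration by parts (Lemma \ref{IBP}) and the remainders by the commutator estimates of Lemma \ref{comm.est.} and Lemma \ref{comm.est.2}. For part (i), I would first write $\HT D^s(f_1 \ds f_2) = f_1 \HT D^s \ds f_2 + [\HT D^s, f_1]\ds f_2$. The contribution of $[\HT D^s, f_1]\ds f_2$ to the inner product is handled by Cauchy--Schwarz together with Lemma \ref{comm.est.2}(i) (with $\La_s = D^{s-1}\ds$, noting $\HT D^s = D^{s-1}\ds \cdot \HT \ds$ up to harmless rearrangement, or more directly by the analogue of Lemma \ref{comm.est.2}(i) for $\HT D^s$, which follows from the same symbol estimate) and the trivial bound $\|f_1 \HT D^s f_2\| \lesssim \|f_1\|_{H^{s_0}}\|f_2\|_{H^s}$, since $s_0 > 5/2 > s_0' $ so $H^{s_0}\hookrightarrow L^\infty$. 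The surviving leading term is $\LR{f_1 \HT D^s f_2, f_1 \HT D^s \ds f_2} = \LR{f_1^2 \HT D^s f_2, \ds(\HT D^s f_2)} + (\text{error from moving } f_1)$; the first piece is exactly of the form $\LR{F \ds G, G}$ with $G = \HT D^s f_2$ and $F = f_1^2 \in H^{s_0}$, so Lemma \ref{IBP} gives $|\LR{f_1^2 \HT D^s f_2, \ds \HT D^s f_2}| \lesssim \|f_1^2\|_{H^{s_0+1}}\|\HT D^s f_2\|^2 \lesssim \|f_1\|_{H^{s_0}}^2 \|f_2\|_{H^s}^2$, and the error terms are lower order and absorbed into the second term on the right-hand side.

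For part (ii) the structure is the same but one level more involved. I would expand $\HT D^s \ds(f_1 \HT \ds f_2)$ using $Q_s$: by the Definition,
\[
\HT D^s\ds(f_1 \HT\ds f_2) = Q_s(f_1, \HT f_2) + (\HT D^s \ds f_1)\HT\ds f_2 + \ds f_1 \HT D^s \ds^2(\HT f_2) + (s+1)\ds f_1 \HT D^s\ds(\HT f_2),
\]
so that the leading (highest-derivative) contribution in $f_2$ is $\ds f_1 \cdot \HT D^s \ds^2 \HT f_2 = -\ds f_1 \cdot D^s \ds^2 f_2$ up to sign (using $\HT^2 = -\mathrm{Id}$ on nonzero frequencies and absorbing the zero mode, which is controlled by $\|f_2\|$). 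The term $\LR{f_1 Q_s(f_1,\HT f_2), D^{s-2}\ds f_2}$ is bounded by Cauchy--Schwarz, $H^{s_0}\hookrightarrow L^\infty$, Lemma \ref{comm.est.} applied to $Q_s$, and $\|D^{s-2}\ds f_2\| \le \|f_2\|_{H^s}$. The terms involving $\HT D^s \ds f_1$ and $\ds f_1 \HT D^s\ds \HT f_2$ carry at most $s$ derivatives and pair against $D^{s-2}\ds f_2$ (at most $s-1$ derivatives), so they are subcritical and handled by Gagliardo--Nirenberg (Lemma \ref{G.N.}) to distribute derivatives. The crux is the leading term $-\LR{f_1^2 D^s \ds^2 f_2, D^{s-2}\ds f_2}$: I would integrate by parts once in $x$ to move one derivative off $D^s \ds^2 f_2$, obtaining $\LR{f_1^2 D^s \ds f_2, D^{s-1}\ds f_2} + \LR{(\ds f_1^2) D^s \ds f_2, D^{s-2}\ds f_2}$ up to relabeling; but $D^s \ds f_2$ and $D^{s-1}\ds f_2$ differ only by the harmless sign operator $\HT$ at the symbol level (both have $|\xi|$-weight of order $s$), so after writing $D^s \ds f_2 = \HT \ds \cdot D^{s-1}\ds f_2 \cdot(\pm)$ this reduces to a term of the type $\LR{f_1^2 \HT \ds G, G}$ with $G$ of order $s$ in $f_2$, which by the identity $2\LR{F\HT\ds G, G} = -\LR{[\HT,F]\ds G, G} - \LR{(\ds F)\HT G, G}$ (as in Lemma \ref{reduction}) and Lemma \ref{comm.est.4} is $\lesssim \|f_1^2\|_{H^{s_0+1}}\|G\|^2 \lesssim \|f_1\|_{H^{s_0}}^2\|f_2\|_{H^s}^2$.

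The main obstacle I anticipate is the bookkeeping in part (ii): carefully tracking which rearrangements of $\HT$, $D^s$, and $\ds$ are legitimate modulo the zero Fourier mode (where $\HT$ annihilates and $\HT^2 \ne -\mathrm{Id}$), and ensuring that every time a derivative is integrated by parts or a symbol is replaced by a comparable one, the resulting error genuinely lands in the "good" class estimated by $\|f_1\|_{H^{s_0}}\|f_1\|_{H^s}\|f_2\|_{H^{s_0}}\|f_2\|_{H^s}$ rather than secretly requiring $s+1$ or $s+2$ derivatives on $f_1$ or $f_2$. The condition $s \ge 2$ in (ii) is exactly what makes $D^{s-2}\ds$ a genuine (non-negative order) operator so that Gagliardo--Nirenberg applies; I would flag where that is used. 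Once the leading terms are cancelled/reduced via Lemma \ref{IBP}, Lemma \ref{reduction}-type identities, and Lemma \ref{comm.est.4}, everything else is routine product estimates via Lemma \ref{G.N.}.
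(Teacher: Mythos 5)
Part (i) of your proposal is essentially the paper's own argument: split off $[\HT D^{s},f_{1}]\ds f_{2}$, estimate it with Lemma \ref{comm.est.2}(i) (applied with $s_{0}-1>3/2$ in place of $s_{0}$, so that $\|f_{1}^{2}\|_{H^{s_{0}}}$ rather than $\|f_{1}^{2}\|_{H^{s_{0}+1}}$ suffices), and treat the surviving term $\LR{f_{1}^{2}\,\ds(\HT D^{s}f_{2}),\HT D^{s}f_{2}}$ with Lemma \ref{IBP}. Your overall plan for part (ii) — a $Q_{s}$/Lemma \ref{comm.est.}-type decomposition, three explicit terms, and an integration by parts on the highest-order one — is also the paper's route.

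However, your handling of the crux term in (ii) has a genuine error. Integrating by parts in $-\LR{f_{1}^{2}D^{s}\ds^{2}f_{2},D^{s-2}\ds f_{2}}$ yields
\[
\LR{D^{s}\ds f_{2},\,\ds(f_{1}^{2})\,D^{s-2}\ds f_{2}}+\LR{D^{s}\ds f_{2},\,f_{1}^{2}\,D^{s-2}\ds^{2}f_{2}},
\]
and since $D^{s-2}\ds^{2}=-D^{s}$ the second term is $-\LR{f_{1}^{2}\,\ds(D^{s}f_{2}),D^{s}f_{2}}$, which Lemma \ref{IBP} controls directly; no Hilbert transform appears. You instead replaced $D^{s-2}\ds^{2}f_{2}=-D^{s}f_{2}$ by $D^{s-1}\ds f_{2}=-\HT D^{s}f_{2}$, manufacturing a term of the form $\LR{F\HT\ds G,G}$ with $G=D^{s}f_{2}$, and then claim to kill it via the identity $2\LR{F\HT\ds G,G}=-\LR{[\HT,F]\ds G,G}-\LR{\ds F\HT G,G}$. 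That identity is false (expanding the right-hand side, it would force $\LR{F\HT G,\ds G}=0$), and, more seriously, $\LR{F\HT\ds G,G}$ is \emph{not} bounded by $\|F\|_{H^{s_{0}+1}}\|G\|^{2}$: for $G=\cos(Nx)$ one has $\HT\ds G=NG$, so $\LR{F\HT\ds G,G}\sim\tfrac{N}{2}\int_{\T}F\,dx$. This is exactly the non-integrable-by-parts structure the paper's correction term exists to cancel; Lemma \ref{reduction} applies only to the balanced combination $v\HT\ds^{2}u+\ds v\HT\ds u=\ds(v\HT\ds u)$, which after one integration by parts is $-\LR{v\HT H,H}$ with the \emph{same} $H=\ds u$ in both slots. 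A smaller imprecision of the same flavor: the explicit terms $D^{s+1}f_{1}\cdot\HT\ds f_{2}$ and $(s+1)\ds f_{1}\HT D^{s+1}f_{2}$ do not "carry at most $s$ derivatives" — each puts $s+1$ derivatives on one factor and must first be rebalanced by moving one full $D$ onto the complementary product (which lies in $H^{1}$); Gagliardo--Nirenberg alone does not transfer derivatives between factors. The decomposition and the target estimates are right, but as written the decisive step of (ii) would fail; performing the integration by parts correctly lands you on the paper's computation, where every resulting term is of the benign $\LR{F\ds G,G}$ type.
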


\begin{proof}
First we show $(i)$.
Note that
\EQQS{
|\LR{f_{1}\HT D^{s}f_{2},\HT D^{s}(f_{1}\ds f_{2})}|
\le|\LR{f_{1}\HT D^{s}f_{2},[\HT D^{s},f_{1}]\ds f_{2}}|+|\LR{f_{1}^{2}\HT D^{s}f_{2},\HT D^{s}\ds f_{2}}|.
}
Lemma \ref{IBP} together with $(i)$ of Lemma \ref{comm.est.2} shows $(i)$.
Next we show $(ii)$.
Lemma \ref{comm.est.} shows that
\EQQS{
&|\LR{D^{s+1}(f_{1}\HT\ds f_{2}),f_{1}D^{s-2}\ds f_{2}}-R_{1}-R_{2}-R_{3}|\\
&\lesssim\|f_{1}\|_{H^{s_{0}}}^{2}\|f_{2}\|_{H^{s}}^{2}
          +\|f_{1}\|_{H^{s_{0}}}\|f_{1}\|_{H^{s}}\|f_{2}\|_{H^{s_{0}}}\|f_{2}\|_{H^{s}},
}
where $R_{1}=\LR{D^{s+1}f_{1}\HT\ds f_{2},f_{1}D^{s-2}\ds f_{2}}$,
$R_{2}=\LR{f_{1}\HT D^{s+1}\ds f_{2},f_{1}D^{s-2}\ds f_{2}}$ and
$R_{3}=(s+1)\LR{\ds f_{1}\HT D^{s+1}f_{2},f_{1}D^{s-2}\ds f_{2}}$.
It is easy to see that
\[|R_{1}|\lesssim\|f_{1}\|_{H^{s_{0}}}\|f_{1}\|_{H^{s}}\|f_{2}\|_{H^{s_{0}}}\|f_{2}\|_{H^{s}}
\quad\text{and}\quad
|R_{3}|\lesssim\|f_{1}\|_{H^{s_{0}}}^{2}\|f_{2}\|_{H^{s}}^{2}.\]
For $R_{2}$, we have
\EQQS{
R_{2}
&=-\LR{f_{1}^{2}D^{s}\ds^{2}f_{2},D^{s-2}\ds f_{2}}
=2\LR{f\ds f_{1}D^{s}\ds f_{2},D^{s-2}\ds f_{2}}-\LR{f_{1}^{2}D^{s}\ds f_{2},D^{s}f}\\
&=-2\LR{\ds(f\ds f_{1}D^{s-2}\ds f_{2}),D^{s}f_{2}}+\LR{f_{1}\ds f_{1},(D^{s}f_{2})},
}
which can be bounded by $\lesssim\|f_{1}\|_{H^{s_{0}}}^{2}\|f_{2}\|_{H^{s}}^{2}$.
This concludes the proof.
\end{proof}

\begin{lem}\label{DL1}
For any $s\ge1$ and $s_{0}>5/2$,
there exists $C(s,s_{0})>0$ such that
for any $u,v\in H^{s+2}(\T)\cap H^{s_{0}}(\T)$,
\EQQS{
&|\LR{D^{s}\ds(u\HT\ds u-v\HT\ds v),D^{s}w}-s\LR{\ds u\HT D^{s}\ds w,D^{s}w}|\\
&+|\LR{\HT D^{s}\ds(u\ds u-v\ds v),D^{s}w}-(s+1)\LR{\ds u\HT D^{s}\ds w,D^{s}w}|\\
&\le C\|w\|_{H^{s}}\{(\|u\|_{H^{s_{0}}}+\|v\|_{H^{s_{0}}})\|w\|_{H^{s}}
    +(\|u\|_{H^{s}}+\|v\|_{H^{s}})\|w\|_{H^{s_{0}}}\\
&\quad+\|w\|_{H^{s_{0}-2}}\|v\|_{H^{s+2}}+\|w\|_{H^{s_{0}-1}}\|v\|_{H^{s+1}}\},
}
where $w=u-v$.
\end{lem}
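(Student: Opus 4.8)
The plan is to expand the difference $u\HT\ds u - v\HT\ds v$ and $u\ds u - v\ds v$ using $w = u-v$, isolate the terms that actually carry the top-order derivative loss, and recognize that these reproduce the claimed quantities $s\LR{\ds u\HT D^{s}\ds w, D^{s}w}$ and $(s+1)\LR{\ds u\HT D^{s}\ds w, D^{s}w}$ up to errors controlled by the commutator machinery already available. First I would write $u\HT\ds u - v\HT\ds v = w\HT\ds u + v\HT\ds w$ (and symmetrically $= u\HT\ds w + w\HT\ds v$, or split the cross terms evenly), and similarly $u\ds u - v\ds v = w\ds u + v\ds w$. Applying $D^{s}\ds$ to the product $v\HT\ds w$ is where the derivative loss sits: the leading piece is $v\HT D^{s}\ds^{2}w$, and after recognizing $D = \HT\ds$ this should be rewritten so that the genuinely dangerous term is $\LR{\ds v\, \HT D^{s}\ds w, D^{s}w}$, not controllable by $\|w\|_{H^{s}}$ alone. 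The key point is that the $\HT$ in front of the outermost derivative is what forbids integration by parts, exactly as emphasized in the introduction.

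The main tool is the Definition of $P_{s}, Q_{s}$ together with Lemma \ref{comm.est.}. I would apply Lemma \ref{comm.est.} to $P_{s}(v,w)$ and $Q_{s}(v,w)$ (and also to the pieces where $w$ plays the role of the coefficient, i.e. $P_{s}(w,u)$ etc., which are harmless because $w$ is then hit by only low-order derivatives and one can afford the $H^{s_{0}}$ norm on $w$). This extracts $D^{s}\ds(v\ds w) = D^{s}\ds v\,\ds w + \ds v\,D^{s}\ds^{2}w + (s+1)\ds v\,D^{s}\ds w + P_{s}(v,w)$ and the Hilbert-transform analogue. Pairing against $D^{s}w$, the term $\LR{\ds v\, D^{s}\ds^{2}w, D^{s}w}$ (from the non-Hilbert nonlinearity, after the outer $D^{s}$ lands correctly) is integrated by parts via Lemma \ref{IBP}-type reasoning to produce $-\tfrac12\LR{\ds^{2}v\,D^{s}\ds w,\cdot}$-style remainders plus a $\|w\|_{H^{s}}^2$-type bound; the term $(s+1)\LR{\ds v\,D^{s}\ds w, D^{s}w}$ stays as is but one still needs to match it against $\ds u$ rather than $\ds v$, contributing an error $\LR{\ds w\, D^{s}\ds w, D^{s}w}$ which is again handled by Lemma \ref{IBP} up to $\|\ds w\|_\I \|D^{s}w\|^2 \lesssim \|w\|_{H^{s_0}}\|w\|_{H^{s}}^2$ since $s_0 > 5/2$. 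For the $\HT$-nonlinearities, the analogous computation with $Q_{s}$ produces the surviving term $s\LR{\ds u\,\HT D^{s}\ds w, D^{s}w}$ (resp. $(s+1)\LR{\ds u\,\HT D^{s}\ds w, D^{s}w}$); note that $u\HT\ds w$ expands to give $D^{s}\ds(u\HT\ds w)$ whose leading piece $u\,\HT D^{s}\ds^{2}w$ must be re-expressed as $-u\,D^{s}\ds w$'s derivative, and tracking the coefficient $s$ versus $s+1$ is exactly the bookkeeping in the statement.

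The steps, in order: (1) decompose the quadratic differences in terms of $w$, splitting symmetric cross-terms so that each product has either $v$ or $w$ as the "slow" factor; (2) apply Lemma \ref{comm.est.} to each product to peel off the top three terms and relegate the rest to the good remainder; (3) in each top term, commute $\LR{D}$ or $\HT\ds$ past coefficients using Lemma \ref{comm.est.2}, and integrate by parts wherever the Hilbert transform is absent, collecting all the resulting lower-order pieces into bounds of the form $\|w\|_{H^s}(\text{stuff})$; (4) for the products where $w$ is the slow factor, bound crudely, absorbing the high derivatives onto $v$ or $u$ — this is the source of the $\|w\|_{H^{s_0-2}}\|v\|_{H^{s+2}}$ and $\|w\|_{H^{s_0-1}}\|v\|_{H^{s+1}}$ terms, which arise precisely because when $w$ is the low-regularity factor we pay with $s+2$ or $s+1$ derivatives on $v$ but only $s_0-2$ or $s_0-1$ on $w$; (5) assemble everything, cancel the designated surviving terms against the subtracted $s\LR{\cdots}$ and $(s+1)\LR{\cdots}$, and read off the claimed bound.

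The main obstacle I expect is step (4) combined with the precise coefficient matching in step (3): one has to be careful that when expanding $v\HT\ds w$ versus $w\HT\ds v$, the surviving $\HT D^s \ds$-term is written in terms of $\ds u = \ds v + \ds w$ and not $\ds v$, since the statement is asymmetric (it keeps $\ds u$). Replacing $\ds v$ by $\ds u$ everywhere generates $\ds w$-terms, and one must verify these never require more than $s_0$ derivatives on $w$ or more than $s$ on $u,v$ — which works only because $D^s$ falls on $w$ in those error terms, not on $w$ twice. The bookkeeping of which of the two $\HT D^s \ds$-terms in $u\HT\ds u - v\HT\ds v$ has coefficient $s$ and which has $s+1$ (and similarly for $u\ds u - v\ds v$) is the delicate part; Lemma \ref{good1} may be needed to reorganize triple products when both the non-Hilbert $\LR{\ds v\,D^s\ds^2 w, D^s w}$ term and its symmetric partner are integrated by parts. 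Everything else is routine product estimation via Lemma \ref{G.N.}.
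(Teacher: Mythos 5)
Your skeleton matches the paper's proof: decompose the quadratic differences through $w$, peel off the top-order terms with $P_{s},Q_{s}$ via Lemma \ref{comm.est.}, track the $s$ versus $s+1$ coefficients, replace $\ds v$ by $\ds u$ at the cost of harmless $\ds w$ errors, and attribute the $\|w\|_{H^{s_{0}-2}}\|v\|_{H^{s+2}}$ and $\|w\|_{H^{s_{0}-1}}\|v\|_{H^{s+1}}$ losses to the terms where $w$ is the low-regularity factor ($w\HT D^{s}\ds^{2}v$ and $\ds w\HT D^{s}\ds v$). That part is all correct and is exactly what the paper does.

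There is, however, a genuine gap at the one non-routine point. Both quadratic nonlinearities here involve $\HT$ (one inside the outer derivative, one outside), so there is no ``non-Hilbert'' piece: after the $P_{s}/Q_{s}$ expansion, \emph{each} nonlinearity produces the dangerous pairing $\LR{u\HT D^{s}\ds^{2}w,D^{s}w}$. Your plan handles it by ``Lemma \ref{IBP}-type reasoning'' / integration by parts, but that is precisely what the Hilbert transform forbids (as you yourself note when quoting the introduction). One integration by parts turns $\LR{u\HT D^{s}\ds^{2}w,D^{s}w}+\LR{\ds u\HT D^{s}\ds w,D^{s}w}$ into $-\LR{u\HT D^{s}\ds w,D^{s}\ds w}$, which still carries $s+1$ derivatives on $w$ in \emph{both} factors and cannot be absorbed into $\|w\|_{H^{s}}^{2}$ by any symmetrization of the form used in Lemma \ref{IBP}. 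The missing ingredient is the smoothing commutator estimate $\|[\HT,f]\ds^{k}g\|\lesssim\|f\|_{H^{s_{0}+k}}\|g\|$ (Lemma \ref{comm.est.4}), packaged in the paper as Lemma \ref{reduction}: the antisymmetry of $\HT$ gives $2\LR{u\HT\ds^{2}U+\ds u\HT\ds U,U}=-\LR{[\HT,u]\ds^{2}U,U}-\LR{\ds^{2}u\HT U,U}$ with $U=D^{s}w$, and it is the two-derivative gain of $[\HT,u]\ds^{2}$ that closes the estimate at the level $\|u\|_{H^{s_{0}}}\|D^{s}w\|^{2}$. Your proposal never invokes this mechanism, and without it the central term of the lemma is not controlled. (A secondary bookkeeping point your sketch glosses over: one unit of the $(s+1)$ coefficient must be sacrificed to form exactly the combination $u\HT\ds^{2}U+\ds u\HT\ds U$ required by Lemma \ref{reduction}, and for the $\HT\ds(u\ds u)$ nonlinearity the needed extra $\LR{\ds u\HT D^{s}\ds w,D^{s}w}$ is manufactured from the cross terms via the identity $\LR{\ds w\HT D^{s}\ds u+\ds v\HT D^{s}\ds w,D^{s}w}=\LR{\ds u\HT D^{s}\ds w+\ds w\HT D^{s}\ds v,D^{s}w}$.)
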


\begin{proof}
Adding and subtracting terms, we obtain
\EQQS{
&|\LR{D^{s}\ds(u\HT\ds w+w\HT\ds v),D^{s}w}-s\LR{\ds u\HT D^{s}\ds w,D^{s}w}|\\
&\le|\LR{P_{s}(u,\HT w)+P_{s}(w,\HT v),D^{s}w}|+|\LR{D^{s}\ds u\HT\ds w,D^{s}w}|\\
&\quad+|\LR{u\HT D^{s}\ds^{2}w+\ds u\HT D^{s}\ds w,D^{s}w}|+\frac{1}{2}|\LR{\HT\ds^{2}v,(D^{s}w)^{2}}|\\
&\quad+|\LR{w\HT D^{s}\ds^{2}v,D^{s}w}|+(s+1)|\LR{\ds w\HT D^{s}\ds v,D^{s}w}|,\\
&|\LR{\HT D^{s}\ds(u\ds w+w\ds v),D^{s}w}-(s+1)\LR{\ds u\HT D^{s}\ds w,D^{s}w}|\\
&\le|\LR{Q_{s}(u,w)+Q_{s}(w,v),D^{s}w}|+|\LR{u\HT D^{s}\ds^{2}w+\ds u\HT D^{s}\ds w,D^{s}w}|\\
&\quad+|\LR{w\HT D^{s}\ds^{2}v,D^{s}w}|+(s+2)|\LR{\ds w\HT D^{s}\ds v,D^{s}w}|
}
since we have
\[\LR{\ds w\HT D^{s}\ds u+\ds v\HT D^{s}\ds w,D^{s}w}=\LR{\ds u\HT D^{s}\ds w+\ds w\HT D^{s}\ds v,D^{s}w}.\]
Note that
\EQQS{
|\LR{D^{s}\ds u\HT\ds w,D^{s}w}|
&=|\LR{D^{s}\ds w\HT\ds w,D^{s}w}+\LR{D^{s}\ds v\HT\ds w,D^{s}w}|\\
&\lesssim\|w\|_{H^{s_{0}}}\|w\|_{H^{s}}^{2}+\|w\|_{H^{s}}\|w\|_{H^{s_{0}-1}}\|v\|_{H^{s+1}}
}
by Lemma \ref{IBP}.
This together with Lemma \ref{comm.est.} and \ref{reduction} gives the desired inequality,
which completes the proof.
\end{proof}

%%%%%%%%%%%%%%%%%%%%%%%%%%%%%%%%%%%%%
%%%%%%%%%%%%%%%%%%%%%%%%%%%%%%%%%%%%%
%%%%%%%%%%%%%%%%    Modified energy      %%%%%%%%%%%%%%%%%%%%%
%%%%%%%%%%%%%%%%%%%%%%%%%%%%%%%%%%%%%
%%%%%%%%%%%%%%%%%%%%%%%%%%%%%%%%%%%%%

\begin{defn}\label{def1}
Let $s\ge2$ and $a,b,c\ge0$.
Set
$\la(s')=-2((c_{1}+c_{2})s'+c_{2})/3$ for $s'\ge0$.
For $f,g\in H^{s}(\T)$ we define
\EQQ{
E_{s}(f,g;a)&:=a\|f-g\|^{2}+\|D^{s}(f-g)\|^{2}+\la(s)\int_{\T} f(\HT D^{s}(f-g))D^{s-2}\ds(f-g)dx,\\
E_{s}(f;b)&:=E_{s}(f,0;1)+b\|f\|^{4s+2}.
}
For $f,g\in L^{2}(\T)$ we define
\EQQ{
\tilde{E}(f,g;c):=c\|f-g\|_{H^{-1}}^{2}+\|f-g\|^{2}-\la(0)\int_{\T}f(\LR{D}^{-1}(f-g))(f-g)dx.
}
\end{defn}

\begin{lem}\label{comparison}
Let $s\ge s_{0}>5/2$ and $K>0$.
Then\\
(i) If $f,g\in H^{s}(\T)$ and $f$ satisfies $\|f\|\le K$,
then there exist $C=C(s,K)$ and $a=a(s,K)$ such that
\EQS{\label{comp.s}
\|f-g\|_{H^{s}}^{2}\le E_{s}(f,g;a)\le C\|f-g\|_{H^{s}}^{2}.
}
(ii) If $f\in H^{s}(\T)$, there exist $C=C(s)$ and $b=b(s)$ such that
\EQS{\label{comp.sg.}
\|f\|_{H^{s}}^{2}\le E_{s}(f;b)\le C(1+\|f\|^{4s})\|f\|_{H^{s}}^{2}
}
(iii) If $f,g\in L^{2}(\T)$ and $f$ satisfies $\|f\|\le K$, then there exist $c=c(K)$ and $C=C(K)$ such that
\EQS{\label{comp.0}
\frac{1}{2}\|f-g\|^{2}\le\tilde{E}(f,g;c)\le C\|f-g\|^{2}.
}
\end{lem}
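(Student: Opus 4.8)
The plan is to treat the three parts uniformly. In each case the energy splits into a quadratic part, which is manifestly comparable to the relevant Sobolev norm with precisely the coefficient displayed in Definition \ref{def1} (the $a$, the $1$, the $c$) in front, plus a cubic correction term, namely the integral term in Definition \ref{def1}. So the whole content is to bound that correction term by $\eps$ times the top-order contribution plus $C$ times the lowest-order contribution, with $\eps$ arbitrarily small and $C$ depending only on $s$ and on $\|f\|$ (the $L^{2}$ norm); afterwards one simply takes $a$, $b$, $c$ large.

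First I would estimate the correction terms of $E_{s}(f,g;a)$ and $E_{s}(f;b)$ together. With $w:=f-g$ (respectively $w:=f$) this term is $I:=\la(s)\LR{f\HT D^{s}w,D^{s-2}\ds w}$. Since $s\ge s_{0}>5/2$, the operator $D^{s-2}\ds$ has order $s-1>3/2$, so $D^{s-2}\ds w\in L^{\I}$; the crucial step is to put that factor in $L^{\I}$ and pair the remaining $f$ \emph{in $L^{2}$} against $D^{s}w$, giving $|I|\le|\la(s)|\,\|f\|\,\|D^{s}w\|\,\|D^{s-2}\ds w\|_{L^{\I}}$. This is exactly what keeps only $\|f\|$ (and not $\|f\|_{L^{\I}}$ or $\|f\|_{H^{s_{0}}}$) on the right. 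By Gagliardo--Nirenberg (Lemma \ref{G.N.}, applied after a routine reduction of fractional orders to integer ones, commuting $\HT$ past the integer derivatives at the cost of replacing $w$ by $\HT w$, which has the same norms) one has the sharp bound $\|D^{s-2}\ds w\|_{L^{\I}}\lec\|w\|^{1/(2s)}\|D^{s}w\|^{1-1/(2s)}+\|w\|$. Combined with Young's inequality this yields, for every $\eps>0$, $|I|\le\eps\|D^{s}w\|^{2}+C(s,\eps,\|f\|)\|w\|^{2}$ with $C$ bounded whenever $\|f\|$ is bounded.

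For (i) I would fix $\eps=\tfrac14$; using $\|f\|\le K$ this gives $|I|\le\tfrac14\|D^{s}w\|^{2}+C_{1}\|w\|^{2}$ with $C_{1}=C_{1}(s,K)$, hence $(a-C_{1})\|w\|^{2}+\tfrac34\|D^{s}w\|^{2}\le E_{s}(f,g;a)\le(a+C_{1})\|w\|^{2}+\tfrac54\|D^{s}w\|^{2}$, so the choice $a:=C_{1}+\tfrac12$ gives (\ref{comp.s}). For (ii) the two terms produced by Young's inequality are $\eps\|D^{s}f\|^{2}+C_{\eps}\|f\|^{4s+2}$ --- the exponent landing \emph{exactly} on $4s+2$ because the top-order factor carries the sharp Gagliardo--Nirenberg exponent $1-\tfrac{1}{2s}$, which is the analytic reflection of the correction term sitting one derivative below the energy level --- and $\eps\|D^{s}f\|^{2}+C_{\eps}\|f\|^{4}$, and one absorbs $\|f\|^{4}\le\de\|f\|^{2}+C_{\de}\|f\|^{4s+2}$, an inequality valid for all $f$ once $C_{\de}$ is taken large in terms of $s$ and $\de^{-1}$. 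Choosing $\eps$ then $\de$ small depending only on $s$, and then $b=b(s)$ large, gives the lower bound in (\ref{comp.sg.}); the upper bound follows from the same estimate together with $\|f\|^{4s+2}=\|f\|^{4s}\|f\|^{2}\lec\|f\|^{4s}\|f\|_{H^{s}}^{2}$, which is the source of the factor $1+\|f\|^{4s}$.

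For (iii) I would write the correction term as $\tilde I:=-\la(0)\LR{\LR{D}^{-1}w,fw}$ with $w=f-g\in L^{2}$ (using that $f$ is real and $fw\in L^{1}$), the idea being to exploit the smoothing of $\LR{D}^{-1}$ so that the bulk of $\tilde I$ is carried by $\|w\|_{H^{-1}}$ rather than $\|w\|$. Splitting $w=w_{\le N}+w_{>N}$ into the Fourier modes $|k|\le N$ and $|k|>N$: on the high part $\|\LR{D}^{-1}w_{>N}\|_{L^{\I}}\lec N^{-1/2}\|w\|$ (Cauchy--Schwarz in frequency), so that contribution is $\lec N^{-1/2}\|f\|\,\|w\|^{2}\le\eps\|w\|^{2}$ once $N$ is large; on the low part $\|\LR{D}^{-1}w_{\le N}\|_{L^{\I}}\lec N^{1/2}\|w\|_{H^{-1}}$, so that contribution is $\lec N^{1/2}\|f\|\,\|w\|_{H^{-1}}\|w\|\le\eps\|w\|^{2}+CN\|f\|^{2}\|w\|_{H^{-1}}^{2}$. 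Choosing $N$ (depending on $\|f\|\le K$ and $\eps$) and then $\eps$ small gives $|\tilde I|\le\tfrac12\|w\|^{2}+C(K)\|w\|_{H^{-1}}^{2}$, and $c:=C(K)$ yields (\ref{comp.0}), the upper bound being immediate from $\|w\|_{H^{-1}}\le\|w\|$. The main obstacle throughout is precisely this bookkeeping: to keep every constant depending only on $\|f\|_{L^{2}}$ one must pair $f$ in $L^{2}$ against the product of the two $w$-factors --- one of them moved into $L^{\I}$, or, in (iii), by exploiting the $H^{-1}$-smoothing via a frequency cut --- rather than the naive Cauchy--Schwarz that would bring in $\|f\|_{L^{\I}}$; and in (ii) this has to be done while still landing the contribution at the exact power $\|f\|^{4s+2}$, which works only because the correction term lies one derivative below the energy norm.
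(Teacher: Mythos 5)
Your proof is correct and follows essentially the same route as the paper's: pair $f$ in $L^{2}$ against $D^{s}w$, place $D^{s-2}\ds w$ in $L^{\I}$, apply the Gagliardo--Nirenberg bound $\|D^{s-2}\ds w\|_{\I}\lec\|w\|^{1/(2s)}\|D^{s}w\|^{1-1/(2s)}$ and Young's inequality to absorb the correction term, then choose $a,b$ large (the exponent $4s+2$ in (ii) arising exactly as you describe). For (iii) the paper merely asserts the proof is identical to (i); your explicit frequency cutoff at level $N$ is a clean, equivalent substitute for the interpolation $\|\LR{D}^{-1}w\|_{\I}\lec\|w\|_{H^{-1}}^{1/2-\e}\|w\|^{1/2+\e}$ that the paper leaves implicit.
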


\begin{proof}
We see from Lemma \ref{G.N.} and the Young inequality that
\EQQS{
\int_{\T}|f(\HT D^{s}(f-g))D^{s-2}\ds(f-g)|dx
&\le\|f\|\|D^{s}(f-g)\|\|D^{s-2}\ds(f-g)\|_{\I}\\
&\le C\|f-g\|^{1/2s}\|D^{s}(f-g)\|^{2-1/2s}\\
&\le C\|f-g\|^{2}+\frac{1}{2}\|D^{s}(f-g)\|^{2}.
}
Choosing $a>0$ so that $a-C\ge1/2$,
we obtain the left hand side of (\ref{comp.s}).
The right hand side of (\ref{comp.s}) follows immediately, which shows $(i)$.

Next we prove (\ref{comp.sg.}).
A similar argument to the proof of (\ref{comp.s}) yields that
\[\int_{\T}|f(\HT D^{s}f)D^{s-2}\ds f|dx\le C\|f\|^{4s+2}+\frac{1}{2}\|D^{s}f\|^{2}.\]
Choosing $b>0$ so that $b-C>1/2$, we obtain (\ref{comp.sg.}).
The proof of $(iii)$ is identical with that of $(i)$.
\end{proof}

In what follows, we simply write $E_{s}(f,g):=E_{s}(f,g;a)$, $E_{s}(f):=E_{s}(f;b)$
and $\tilde{E}_{s}(f,g):=\tilde{E}_{s}(f,g;c)$, where $a,b$ and $c$ are defined by Lemma \ref{comparison}.

\begin{defn}\label{BS}
Let $s\ge0$, $f\in H^{s}(\T)$ and $\ga\in(0,1)$.
And let $\rho\in C_{0}^{\I}(\R)$ satisfy $\supp\rho\subset[-2,2]$,
$0\le\rho\le1$ on $\R$ and $\rho\equiv1$ on $[-1,1]$.
We put
\[\ha{J_{\ga}f}(k):=\rho(\ga k)\hat{f}(k).\]
\end{defn}

For the proof of the following lemma, see Remark 3.5 in \cite{ErTzi}.

\begin{lem}\label{lem_BS}
Let $s\ge 0$, $\al\ge0$, $\ga\in(0,1)$ and $f\in H^s(\T)$.
Then, $J_{\ga} f \in H^\I(\T)$ satisfies
\EQQ{
&\|J_{\ga} f -f\|_{H^s} \to 0 \quad(\ga\to 0), \quad\|J_{\ga} f -f \|_{H^{s-\al}} \lec \ga^{\al} \|f\|_{H^s},\\
&\|J_{\ga}f\|_{H^{s-\al}}\le \|f\|_{H^{s-\al}}, \quad\|J_{\ga} f\|_{H^{s+\al}} \lec \ga^{-\al}\|f\|_{H^s}.
}
\end{lem}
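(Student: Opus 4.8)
The final statement to prove is Lemma \ref{lem_BS}, which collects the standard approximation properties of the Fourier multiplier operator $J_\ga$ defined by $\ha{J_\ga f}(k) = \rho(\ga k)\ha f(k)$. The plan is to verify each of the four assertions directly on the Fourier side, exploiting that $\|g\|_{H^s}^2 \sim \sum_{k\in\Z} \LR{k}^{2s}|\ha g(k)|^2$ and that $\rho$ is bounded, compactly supported, and identically $1$ near the origin.

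First I would record the two boundedness estimates. Since $0\le\rho\le1$, for every $k$ we have $\LR{k}^{2(s-\al)}|\rho(\ga k)|^2|\ha f(k)|^2 \le \LR{k}^{2(s-\al)}|\ha f(k)|^2$, and summing gives $\|J_\ga f\|_{H^{s-\al}}\le\|f\|_{H^{s-\al}}$. For the gain estimate $\|J_\ga f\|_{H^{s+\al}}\lec \ga^{-\al}\|f\|_{H^s}$, I would use that $\rho$ is supported in $[-2,2]$, so $\rho(\ga k)\ne0$ forces $|\ga k|\le2$, hence $\LR{k}^{2\al}\lec \LR{2/\ga}^{2\al}\lec \ga^{-2\al}$ on the support (for $\ga\in(0,1)$); multiplying the summand $\LR{k}^{2s}|\ha f(k)|^2$ by this factor and summing yields the claim. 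The constant here depends only on $\al$.

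Next I would handle the two convergence statements. For the rate estimate $\|J_\ga f - f\|_{H^{s-\al}}\lec\ga^\al\|f\|_{H^s}$: the multiplier of $J_\ga - I$ is $\rho(\ga k)-1$, which vanishes for $|\ga k|\le1$, so on the relevant frequencies $|\ga k|\ge1$, i.e. $\LR{k}^{-2\al}\lec \ga^{2\al}$ (again using $\ga\in(0,1)$ and $|\rho(\ga k)-1|\le1$); therefore $\LR{k}^{2(s-\al)}|\rho(\ga k)-1|^2|\ha f(k)|^2 \lec \ga^{2\al}\LR{k}^{2s}|\ha f(k)|^2$, and summing gives the bound. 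Finally, for $\|J_\ga f - f\|_{H^s}\to0$ as $\ga\to0$: for each fixed $k$, $\rho(\ga k)\to\rho(0)=1$ as $\ga\to0$, so the summand $\LR{k}^{2s}|\rho(\ga k)-1|^2|\ha f(k)|^2\to0$ pointwise in $k$; since it is dominated by the summable sequence $\LR{k}^{2s}|\ha f(k)|^2$ (independent of $\ga$), the dominated convergence theorem for series gives $\|J_\ga f-f\|_{H^s}^2\to0$. The smoothness $J_\ga f\in H^\I(\T)$ is immediate since $\ha{J_\ga f}$ is supported in $\{|k|\le 2/\ga\}$, hence $\LR{k}^N\ha{J_\ga f}(k)\in\ell^2$ for every $N$.

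There is no real obstacle here — every step is an elementary Fourier-side estimate, and the only mild care needed is tracking that $\ga\in(0,1)$ so that $\LR{k}\sim|k|$ on the frequency ranges in question and the powers of $\ga$ come out with the stated sign; the reference to Remark 3.5 in \cite{ErTzi} in the excerpt suggests the author simply cites this, so a short self-contained argument along the above lines suffices.
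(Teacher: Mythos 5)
Your proof is correct and is the standard Fourier-side argument; the paper itself gives no proof of this lemma, only a citation to Remark 3.5 of Erdogan--Tzirakis, and your self-contained verification (pointwise multiplier bounds on $\supp\rho(\ga\cdot)$ and on $\{|\ga k|\ge1\}$ where $\rho(\ga k)-1$ is supported, plus dominated convergence for the $H^s$ limit) is exactly the argument that citation points to.
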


We employ the parabolic regularization on the problem (\ref{BO1})-(\ref{BO2}):
\EQS{
\label{BO1pr}
&\dt u-\ds^{3}u+u^{2}\ds u+c_{1}\ds(u\HT\ds u)+c_{2}\HT\ds(u\ds u)=-\ga D^{5/2}u,\\
\label{BO2pr}
&u(0,x)=\vp(x),
}
where $(t,x)\in[0,\I)\times\T$ and $\ga\in(0,1)$.
In what follows, we only consider $t\ge0$.
In the case $t\le0$, we only need to replace $-\ga D^{5/2}u$ with $\ga D^{5/2}u$ in (\ref{BO1pr}).

%%%%%%%%%%%%%%%%%%%%%%%%%%%%%%%%%%%%%
%%%%%%%%%%%%%%%%%%%%%%%%%%%%%%%%%%%%%
%%%%%%%%%%%%%%%%    parabolic regularization      %%%%%%%%%%%%%%%%%%%%%
%%%%%%%%%%%%%%%%%%%%%%%%%%%%%%%%%%%%%
%%%%%%%%%%%%%%%%%%%%%%%%%%%%%%%%%%%%%
\begin{prop}\label{para.regu.}
Let $s\ge2$ and $\ga\in(0,1)$.
For any $\vp\in H^{s}(\T)$,
there exist $T_{\ga}\in(0,\I]$ and the unique solution $u\in C([0,T_{\ga}),H^{s}(\T))$
to the IVP (\ref{BO1pr})--(\ref{BO2pr}) on $[0,T_{\ga})$ such that
(i) $\liminf_{t\to T_{\ga}}\|u(t)\|_{H^{2}}=\I$ or (ii) $T_{\ga}=\I$ holds.
Moreover, $u$ satisfies
\EQS{\label{regularity}
u\in C((0,T_{\ga}),H^{s+\al}(\T)),\quad\forall\al>0.
}
\end{prop}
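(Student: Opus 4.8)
The plan is to prove Proposition \ref{para.regu.} by the standard fixed-point/parabolic-smoothing machinery applied to the mild formulation of (\ref{BO1pr})--(\ref{BO2pr}). Writing the equation in Duhamel form with the smoothing semigroup $e^{-\ga t D^{5/2}}$,
\EQQS{
u(t)=e^{-\ga tD^{5/2}}\vp+\int_{0}^{t}e^{-\ga(t-t')D^{5/2}}\Big(\ds^{3}u-u^{2}\ds u-c_{1}\ds(u\HT\ds u)-c_{2}\HT\ds(u\ds u)\Big)(t')\,dt',
}
I would set up a contraction in the space $X_{T}=C([0,T];H^{s}(\T))$ (for $s\ge2$, so that $H^{s}$ is an algebra and controls $\|\ds^{2}u\|_{\I}$). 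The key quantitative input is the parabolic gain: for $\be\ge0$ one has $\|e^{-\ga tD^{5/2}}f\|_{H^{s+\be}}\lec(\ga t)^{-2\be/5}\|f\|_{H^{s}}$, with the exponent $2\be/5<1$ being integrable in $t'$ near $t'=t$. Since the worst nonlinear term $\ds^{3}u$ and the two quadratic terms each cost at most three derivatives, taking $\be=3$ gives an exponent $6/5$, which is \emph{not} integrable — so one instead distributes: write $\ds^{3}=D^{1/2}\cdot(\text{2 derivatives})$ and more importantly observe that the convolution estimate $\int_{0}^{t}(t-t')^{-2\be/5}\,dt'$ is finite exactly when $\be<5/2$. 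Thus I would only ask for a gain of $\be$ slightly below $5/2$ on the linear term, absorbing the remaining derivative by a short-time smallness factor $T^{1-2\be/5}$; the quadratic terms, costing effectively two derivatives on the $H^{s}$-level plus lower-order pieces handled by the algebra property and Lemma \ref{G.N.}, are even better. This yields that the Duhamel map is a contraction on a ball in $X_{T}$ for $T=T(\|\vp\|_{H^{s}})$ small, giving existence and uniqueness of a local solution.

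Next I would upgrade local existence to a maximal solution with the stated blow-up alternative. Define $T_{\ga}$ as the supremum of times of existence; standard continuation shows that if $T_{\ga}<\I$ and $\|u(t)\|_{H^{s}}$ stayed bounded as $t\to T_{\ga}$ one could restart the iteration past $T_{\ga}$, a contradiction. To replace $\|u(t)\|_{H^{s}}$ by $\|u(t)\|_{H^{2}}$ in the blow-up criterion one runs the $H^{s}$ energy estimate: differentiating $\|D^{s}u\|^{2}$ in time along (\ref{BO1pr}), the dissipative term $-2\ga\|D^{s+5/4}u\|^{2}\le0$ can be discarded, and the nonlinear contributions are estimated by Lemma \ref{comm.est.} (for $P_{s},Q_{s}$), Lemma \ref{reduction}, Lemma \ref{IBP} and the Gagliardo–Nirenberg inequality, producing $\tfrac{d}{dt}\|u\|_{H^{s}}^{2}\lec(1+\|u\|_{H^{s_{0}}})^{C}\|u\|_{H^{s}}^{2}$ for some $s_{0}\in(5/2,s]$ (here for $s=2$ one simply uses $s_{0}=2$); since $H^{2}\hookrightarrow H^{s_{0}}$ when $s_{0}\le2$ is false, I would instead note that for the purpose of the $H^{2}$-criterion one first establishes persistence of the $H^{2}$-norm controlling everything via a closed $H^{2}$ estimate (the correction-term energy of Lemma \ref{comparison}/Definition \ref{def1} is tailor-made for this and already cancels the derivative-loss term), and then bootstraps to $H^{s}$ by Gronwall. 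Hence whenever $\limsup_{t\to T_{\ga}}\|u(t)\|_{H^{2}}<\I$ with $T_{\ga}<\I$, the full $H^{s}$-norm stays finite, contradicting maximality; this is precisely alternative (i)-or-(ii).

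Finally, for the instantaneous smoothing (\ref{regularity}) I would use a bootstrap on the Duhamel formula. Fix $t_{0}\in(0,T_{\ga})$; since $u\in C([0,T_{\ga});H^{s})$, on any $[t_{0},t_{1}]\Subset(0,T_{\ga})$ the solution is bounded in $H^{s}$, and applying the parabolic gain estimate with the data taken at time $t_{0}/2$,
\EQQS{
\|u(t)\|_{H^{s+\be}}\lec(\ga(t-t_{0}/2))^{-2\be/5}\|u(t_{0}/2)\|_{H^{s}}+\int_{t_{0}/2}^{t}(t-t')^{-2\be/5}\|(\text{nonlinearity})(t')\|_{H^{s}}\,dt',
}
one first gets $u\in C((0,T_{\ga});H^{s+\be})$ for any $\be<5/2$; feeding this improved regularity back into the nonlinear term (now bounded in $H^{s+\be-2}$, hence better than before) and iterating raises the index by nearly $5/2-2=1/2$ each round, and after finitely many steps one passes any prescribed $\al>0$. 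Continuity in $t$ on $(0,T_{\ga})$ at each level follows from the continuity of the semigroup in the strong topology together with dominated convergence in the Duhamel integral. The main obstacle is the sharp bookkeeping of derivative counts in the first paragraph — ensuring the convolution exponent stays strictly below $1$ so that the contraction time depends only on $\|\vp\|_{H^{s}}$ — and, in the second paragraph, arranging the energy estimate so the derivative-loss term $\LR{\ds u\HT D^{s}\ds w,D^{s}w}$-type contribution is absorbed by the correction term rather than naively bounded; everything else is routine once those two points are pinned down.
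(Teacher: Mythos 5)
The central gap is your treatment of the third-order term. You place $\ds^{3}u$ inside the Duhamel integral against the pure dissipative semigroup $e^{-\ga tD^{5/2}}$, which forces you to recover three derivatives from a $5/2$-order dissipation; the convolution kernel $(t-t')^{-6/5}$ is not integrable, and the workaround you sketch --- asking only for a gain $\be<5/2$ and ``absorbing the remaining derivative by a short-time smallness factor'' --- does not close. With $u\in H^{s}$ the forcing $\ds^{3}u$ lies only in $H^{s-3}$, and a gain of $\be<5/2$ lands you in $H^{s-3+\be}\subsetneq H^{s}$: the map fails to preserve the space, and no positive power of $T$ compensates for a net loss of derivatives. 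The paper avoids this entirely by putting the dispersion into the linear flow, i.e.\ taking the propagator $U_{\ga}(t)=\F^{-1}[e^{-i\xi^{3}t-\ga|\xi|^{5/2}t}\F]$ (unitary Airy-type group composed with the dissipative semigroup, which satisfies the same smoothing bounds). Then only the quadratic nonlinearities, costing at most two derivatives, sit in the integral; the exponent is $4/5<1$ and the contraction closes in $C([0,T];H^{2})$ with $T=T(\|\vp\|_{H^{2}},\ga)$.

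A second, smaller problem: your route to the $\|u(t)\|_{H^{2}}$ blow-up criterion via ``a closed $H^{2}$ estimate using the correction-term energy'' is not available, since Definition \ref{def1}, Lemma \ref{comparison} and the energy estimates of Section 3 all require $s_{0}>5/2$; there is no closed correction-term energy at the $H^{2}$ level. The criterion instead comes for free from the fixed point: the local existence time depends only on $\|\vp\|_{H^{2}}$ and $\ga$, and on each such interval the $H^{s}$ norm at most doubles, so the solution extends (in every $H^{s}$, $s\ge2$, simultaneously) as long as $\|u(t)\|_{H^{2}}$ remains bounded. Your bootstrap for (\ref{regularity}) is essentially the paper's argument (the paper gains $1/4$ of a derivative per step rather than nearly $1/2$) and is fine once the correct propagator is in place.
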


\begin{proof}
This follows from the standard argument, for expamle, see Proposition 2.8 in \cite{Tsugawa17},
but we reproduce the proof here for the sake of completeness.
First we consider the case $s=2$.
For simplicity, set $F(u)=-u^{2}\ds u-c_{1}\ds(u\HT\ds u)-c_{2}\HT\ds(u\ds u)$.
Let $U_{\ga}(t)$ be the linear propagator of the linear part of (\ref{BO1pr}), i.e.,
\[U_{\ga}(t)\vp={\F}^{-1}[e^{-i\xi^{3}t-\ga|\xi|^{5/2}t}\hat{\vp}]\]
for a function $\vp$.
Note that
\EQS{\label{eq2.7}
\|D^{\al}U_{\ga}(t)\vp\|\le\frac{C(\al)}{(\ga t)^{2\al/5}}\|\vp\|\quad
  {\rm and}\quad
  \|U_{\ga}(t)\vp\|_{H^{\al}}\le C(\al)(1+(\ga t)^{-2\al/5})\|\vp\|
}
for $t>0$ and $\al>0$.
We show the map
\[\Ga(u(t))=U_{\ga}(t)\vp+\int_{0}^{t}U_{\ga}(t-\tau)F(u)d\tau\]
is a contraction on the ball
\[B_{r}=\left\{u\in C([0,T];H^{2}(\T));\|u\|_{X}:=\sup_{t\in[0,T]}\|u(t)\|_{H^{2}}\le r\right\},\]
where $r>0$ and $T$ will be chosen later (which is sufficiently small and depends only on $\|\vp\|_{H^{2}}$ and $\ga$).
Set $r=2\|\vp\|_{H^{s}}$.
We show that $\Gamma$ maps from $B_{r}$ to $B_{r}$.
Let $u\in B_{r}$.
Obviously,
\[\|\Ga(u(t))\|_{H^{2}}\le\|\vp\|_{H^{2}}+\int_{0}^{t}\|U_{\ga}(t-t')F(u)\|_{H^{2}}dt'.\]
The Plancherel theorem implies that
\EQQS{
\|U_{\ga}(t-t')\ds u^{3}\|_{H^{2}}
&=\|\LR{\xi}^{2}|\xi|e^{-\ga(t-t')|\xi|^{5/2}}{\F}u^{3}\|_{l^{2}}\\
&\lesssim\ga^{-2/5}(t-t')^{-2/5}\|u^{3}\|_{H^{2}}
\lesssim\ga^{-2/5}(t-t')^{-2/5}\|\vp\|_{H^{2}}^{3}.
}
Similarly, we have
\[\|U_{\ga}(t-t')\HT\ds(u\ds u)\|_{H^{2}}\lesssim\ga^{-4/5}(t-t')^{-4/5}\|\vp\|_{H^{2}}^{2}.\]
On the other hand,
\begin{align*}
\|U_{\ga}(t-t')\ds(u\HT\ds u)\|_{H^{2}}
%&\lesssim(1+\ga^{-4/5}(t-t')^{-4/5})\|u\HT\ds u\|_{H^{1}}\\
&\lesssim(1+\ga^{-4/5}(t-t')^{-4/5})\|\vp\|_{H^{2}}^{2}.
\end{align*}
It then follows that
\EQQS{
&\sup_{t\in[0,T]}\|\Ga(u(t))\|_{H^{2}}\\
&\le\|\vp\|_{H^{2}}
   +C\{\|\vp\|_{H^{2}}^{2}\ga^{-2/5}T^{3/5}+\|\vp\|_{H^{2}}(T+\ga^{-4/5}T^{1/5})\}\|\vp\|_{H^{2}}
\le2\|\vp\|_{H^{2}}
}
for sufficiently small $T=T(\|\vp\|_{H^{2}},\ga)>0$ and any $u\in B_{r}$.
By a similar argument, we can show that $\|\Ga(u)-\Ga(v)\|_{X}\le2^{-1}\|u-v\|_{X}$ when $u,v\in B_{r}$.
Therefore, $\Ga$ is a contraction map from $B_{r}$ to $B_{r}$, which implies that there exists $u\in B_{r}$ such that
$u=\Gamma(u)$ on $[0,T]$.
Since $\|u(T)\|_{H^{2}}$ is finite, we can repeat the argument above with initial data $u(T)$ to obtain the solution
on $[T,T+T']$.
Iterating this process, we can extend the solution on $[0,T_{\ga})$
where $T_{\ga}=\I$ or $\liminf_{t\to T_{\ga}}\|u(t)\|_{H^{2}}=\I$ holds.

Next, we consider the case $s>2$.
The solution obtained by the argument above satisfies
\begin{align}\label{eq3.60}
u(t)=U_{\ga}(t)\vp+\int_{0}^{t}U_{\ga}(t-t')F(u)dt'.
\end{align}
Note that
\[\|U_{\ga}(t-t')\ds u^{3}\|_{H^{s}}
  \lesssim\ga^{-2/5}(t-t')^{-2/5}\|u^{3}\|_{H^{s}}
  \lesssim\ga^{-2/5}(t-t')^{-2/5}\|\vp\|_{H^{2}}^{2}\|\vp\|_{H^{s}}.\]
We can estimate the other nonlinear terms in the same manner as above.
It then follows that
\EQQS{
\sup_{t\in[0,T]}\|u(t)\|_{H^{s}}
&\le\|\vp\|_{H^{s}}
   +C\{\|\vp\|_{H^{2}}^{2}\ga^{-2/5}T^{3/5}+\|\vp\|_{H^{2}}(T+\ga^{-4/5}T^{1/5})\}\|\vp\|_{H^{s}}\\
&\le2\|\vp\|_{H^{s}}
}
for sufficiently small $T=T(\|\vp\|_{H^{2}},\ga)>0$.
By using (\ref{eq3.60}), we also obtain $u\in C([0,T];H^{s}(\T))$.
Since $\|u(T)\|_{H^{s}}$ is finite, we can repeat the argument above with initial data $u(T)$
to obtain $u\in C([T,T+T'];H^{s}(\T))$.
We can iterate this process as far as $\|u(t)\|_{H^{2}}<\I$.
Therefore, we obtain $u\in C([0,T_{\ga});H^{s}(\T))$.
We omit the proof of the uniqueness since it follows from a standard argument.
Let $0<\de<T_{\ga}/2$.
We see from (\ref{eq2.7}) and (\ref{eq3.60}) that $u\in C([\delta,T_{\ga});H^{s+1/4}(\T))$.
The same argument as above with the initial data
$u(\de)\in H^{s+1/4}(\T)$ shows that $u\in C([\de+\de/2,T_{\ga});H^{s+1/2}(\T))$.
Iterating this procedure, we obtain (\ref{regularity}) since $\de$ is arbitrary, which completes the proof.
\end{proof}

%%%%%%%%%%%%%%%%%%%%%%%%%%%%%%%%%%%%%
%%%%%%%%%%%%%%%%%          section 3         %%%%%%%%%%%%%%%%%%%%
%%%%%%%%%%%%%%%%%%%%%%%%%%%%%%%%%%%%%
\section{energy estimate}

In this section, we obtain an {\it a priori} estimate of the solution of (\ref{BO1}),
which is important to have the time $T$ independent of $\ga$.

\begin{prop}\label{ene.est.}
Let $s\ge s_{0}>5/2$,
$\ga\in(0,1)$, $\vp\in H^{s}(\T)$.
Let 
$T_{\ga}>0$ and let $u\in C([0,T_{\ga}),H^{s}(\T))\cap C((0,T_{\ga});H^{s+3}(\T))$
be the solution to (\ref{BO1pr})--(\ref{BO2pr}),
both of which are obtained by Proposition \ref{para.regu.}.
Then, there exist $T=T(s_{0},\|\vp\|_{H^{s_{0}}})>0$ and $C=C(s,s_{0},\|\vp\|_{H^{s_{0}}})>0$ such that
\begin{align}\label{single1}
T_{\ga}\ge T,\quad\sup_{t\in[0,T]}E_{s}(u(t))\le CE_{s}(\vp),\quad\frac{d}{dt}E_{s}(u(t))\le CE_{s}(u(t))
\end{align}
on $[0,T]$, where $T$ (resp. $C$) is monotone decreasing (resp. increasing) with $\|\vp\|_{H^{s_{0}}}$.
\end{prop}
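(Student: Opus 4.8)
The plan is to run a standard energy method on the parabolic regularization (\ref{BO1pr})--(\ref{BO2pr}), but using the \emph{modified} energy $E_{s}(u)$ from Definition \ref{def1} rather than the naive Sobolev norm, so that the correction term kills the top-order term in (\ref{deri.los.}) uniformly in $\gamma$. Since $u(t)\in H^{s+3}(\T)$ for $t>0$ by Proposition \ref{para.regu.}, all the integrations by parts and Fourier manipulations below are legitimate for positive times, and the endpoint $t=0$ is handled by continuity. First I would differentiate $E_{s}(u(t))=E_{s}(u,0;1)+b\|u\|^{4s+2}$ in time. The contribution of the parabolic term $-\gamma D^{5/2}u$ to $\frac{d}{dt}\|D^{s}u\|^{2}$ is $-2\gamma\|D^{s+5/4}u\|^{2}\le0$, so it may simply be discarded; the same sign consideration disposes of its contribution to the correction term up to lower-order pieces that are controlled by $\|u\|_{H^{s_0}}$ (here one uses that the correction term is, by the proof of Lemma \ref{comparison}, bounded by $\varepsilon\|D^{s}u\|^{2}+C\|u\|^{2}$-type quantities). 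Thus it suffices to estimate $\frac{d}{dt}E_{s}(u(t))$ as if $u$ solved (\ref{BO1}) exactly, and obtain a bound $\le C(\|u\|_{H^{s_0}})E_{s}(u(t))$.

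Next I would expand $\frac{d}{dt}E_{s}(u,0;1)$ term by term. For $\frac{d}{dt}\|D^{s}u\|^{2}=2\LR{D^{s}\dt u,D^{s}u}$, substitute $\dt u=\ds^{3}u-u^{2}\ds u-c_{1}\ds(u\HT\ds u)-c_{2}\HT\ds(u\ds u)$. The $\ds^{3}u$ piece contributes $2\LR{D^{s}\ds^{3}u,D^{s}u}=0$ by oddness. The $u^{2}\ds u$ piece is handled by a commutator estimate of the type in Lemma \ref{comm.est.2}(i) plus Lemma \ref{IBP}, giving $\lesssim(1+\|u\|_{H^{s_0}})^{2}\|u\|_{H^{s}}^{2}$; this is also where the term $a_{s}\|u\|^{4s+2}$-derivative, i.e.\ $b$-term $\frac{d}{dt}\|u\|^{4s+2}$, absorbs the residual, using $\frac{d}{dt}\|u\|^{2}\lesssim\|u\|_{H^{s_0}}\|u\|^{2}$ (for $c_1\neq c_2$ invoke Lemma \ref{reduction}, which quantifies the failure of the $L^2$ conservation law). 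For the two genuinely problematic terms $c_{1}\ds(u\HT\ds u)$ and $c_{2}\HT\ds(u\ds u)$, I would apply Lemma \ref{DL1} with $v=0$, $w=u$: it extracts exactly $s\LR{\ds u\HT D^{s}\ds u,D^{s}u}$ and $(s+1)\LR{\ds u\HT D^{s}\ds u,D^{s}u}$ as the non-integrable remainders, everything else being $\lesssim(1+\|u\|_{H^{s_0}})\|u\|_{H^{s}}^{2}$. So the dangerous contribution to $\frac{d}{dt}\|D^{s}u\|^{2}$ is a constant (depending on $c_1,c_2,s$) times $\LR{\ds u\HT D^{s}\ds u,D^{s}u}$.

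Then I would compute the time derivative of the correction term $\lambda(s)\LR{u\HT D^{s}u,D^{s-2}\ds u}$, where $\lambda(s)=-2((c_1+c_2)s+c_2)/3$ is rigged precisely so that the leading contribution cancels the surviving term above. Writing the derivative via the product rule, the $\dt u$ in the middle and outer factors each get replaced using the equation; the key algebraic fact is that the $\ds^{3}$-part of $\dt u$, inserted into $\LR{u\HT D^{s}(\ds^{3}u),D^{s-2}\ds u}+\LR{u\HT D^{s}u,D^{s-2}\ds(\ds^{3}u)}$, can be rearranged by Lemma \ref{good1} (the ``$\ds^3$ triple integration by parts'') into $3$ copies of a term of the form $\LR{\ds u\HT D^{s}\ds u,D^{s}u}$ plus lower order, and this is where the factor $3$ in the denominator of $\lambda$ comes from. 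The remaining insertions (the nonlinear parts of $\dt u$ into the correction term, and $\dt u=\ds^3 u+\dots$ into the outer factor) are all lower order and are bounded by Lemma \ref{good2}(ii) and Lemma \ref{comparison} by $\lesssim(1+\|u\|_{H^{s_0}}^{2})\|u\|_{H^{s}}^{2}$. Collecting everything and using the choice of $\lambda(s)$, the $\LR{\ds u\HT D^{s}\ds u,D^{s}u}$ terms cancel exactly, leaving $\frac{d}{dt}E_{s}(u)\le C(\|u\|_{H^{s_0}})\|u\|_{H^{s}}^{2}\le C(\|u\|_{H^{s_0}})E_{s}(u)$ by Lemma \ref{comparison}. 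Specializing to $s=s_0$ and using $\|u\|_{H^{s_0}}^{2}\le E_{s_0}(u)$, a continuity/bootstrap argument (comparing with the ODE $y'=Cy^{?}$, more precisely $\frac{d}{dt}E_{s_0}\lesssim (1+E_{s_0}^{2s_0})E_{s_0}$) yields $T=T(\|\vp\|_{H^{s_0}})>0$ with $\sup_{[0,T]}E_{s_0}(u)\le 2E_{s_0}(\vp)$ (say), and by the blow-up alternative in Proposition \ref{para.regu.} this forces $T_{\gamma}\ge T$, uniformly in $\gamma$. Feeding this a priori $H^{s_0}$-bound back into the general-$s$ inequality and applying Gronwall gives $\sup_{[0,T]}E_{s}(u)\le CE_{s}(\vp)$ with $C=C(s,s_0,\|\vp\|_{H^{s_0}})$, completing (\ref{single1}).

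The main obstacle is the cancellation bookkeeping in the last paragraph: one must track \emph{all} the top-order terms of the form $\LR{\ds u\HT D^{s}\ds u,D^{s}u}$ — one set coming from $\frac{d}{dt}\|D^{s}u\|^2$ via Lemma \ref{DL1}, another (a factor of $3$) from the $\ds^3$-part of the equation acting on the correction term via Lemma \ref{good1} — and verify that with $\lambda(s)=-2((c_1+c_2)s+c_2)/3$ the coefficients sum to zero, while being careful that this term cannot itself be integrated by parts (the Hilbert transform obstructs it, as emphasized in the introduction) so the cancellation is genuinely necessary and not merely cosmetic. A secondary technical point is justifying that the parabolic term contributes non-positively (or harmlessly) to \emph{every} piece of $E_s$, including the correction term, so that the final estimate is truly uniform in $\gamma\in(0,1)$; this is routine but must be checked since the correction term is not sign-definite.
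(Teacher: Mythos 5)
Your overall architecture coincides with the paper's: Lemma \ref{DL1} with $v=0$ to isolate the non-integrable terms $s\LR{\ds u\HT D^{s}\ds u,D^{s}u}$ and $(s+1)\LR{\ds u\HT D^{s}\ds u,D^{s}u}$, Lemma \ref{good1} to produce the factor $3$ from the $\ds^{3}$ part acting on the correction term, the choice $\la(s)=-2((c_{1}+c_{2})s+c_{2})/3$ to cancel them, and then a continuity/bootstrap argument at the level $s=s_{0}$ followed by Gronwall for general $s$. That part is correct and is essentially the paper's proof (the paper isolates the differential inequality as Lemma \ref{pre.ene.} and runs the bootstrap in the proof of the proposition itself).

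The one genuine gap is your treatment of the parabolic term's contribution to the correction term. You assert that ``the same sign consideration disposes of its contribution to the correction term up to lower-order pieces,'' invoking the fact that the correction term itself is bounded by $\eps\|D^{s}u\|^{2}+C\|u\|^{2}$. That reasoning does not apply: what appears is not the correction term but its time derivative along the flow of $-\ga D^{5/2}u$, i.e.\ trilinear terms such as $\ga\LR{D^{5/2}u\,\HT D^{s}u,D^{s-2}\ds u}$ and $\ga\LR{u\,\HT D^{s+5/2}u,D^{s-2}\ds u}$, which carry $2s+3/2$ derivatives in total, are not sign-definite, and cannot be bounded by $\|u\|_{H^{s}}^{2}$ for any distribution of derivatives. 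The paper closes this by interpolation and absorption into the dissipation: by Lemma \ref{G.N.},
\begin{equation*}
\|D^{s-2}\ds u\|_{\I}\le C\|u\|^{1-(4s-2)/(4s+5)}\|D^{s+5/4}u\|^{(4s-2)/(4s+5)},
\end{equation*}
so that each such term is bounded by $\ga C\|u\|^{1+2/(4s+5)}\|D^{s+5/4}u\|^{2-2/(4s+5)}$, and Young's inequality turns this into $C\|u\|^{4s+7}+\tfrac{1}{3}\ga^{1+1/4(s+1)}\|D^{s+5/4}u\|^{2}$; since $\ga<1$, the second piece is dominated by the dissipative term $-2\ga\|D^{s+5/4}u\|^{2}$ coming from $\frac{d}{dt}\|D^{s}u\|^{2}$, while $\|u\|^{4s+7}\lesssim_{K}\|u\|^{2}\le E_{s}(u)$. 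So uniformity in $\ga$ genuinely uses the strength of the dissipation (its order $s+5/4$ exceeding $s+3/4$ and the matching $\ga$ prefactor), not merely its sign; without this step the estimate $\frac{d}{dt}E_{s}(u)\le CE_{s}(u)$ does not close. The rest of your argument, including the bootstrap giving $T$ independent of $\ga$ and the blow-up alternative forcing $T_{\ga}\ge T$, is fine.
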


Before proving Proposition \ref{ene.est.}, we give the following lemma.
%%%%%%%%%%%%%%%%%%%%%%%%%%%%%%%%%%%%%
%%%%%%%%%%%%%%%%%          pre.ene.         %%%%%%%%%%%%%%%%%%%%
%%%%%%%%%%%%%%%%%%%%%%%%%%%%%%%%%%%%%
\begin{lem}\label{pre.ene.}
Let $s\ge s_{0}>5/2$,
$\ga\in[0,1)$, $T>0$, $u\in C([0,T],H^{s}(\T))\cap C((0,T];H^{s+3}(\T))$ satisfy (\ref{BO1pr}) on $[0,T]\times\T$
and $\sup_{t\in[0,T]}E_{s_{0}}(u(t))\le K$ for $K>0$.
Then, there exists $C=C(s,s_{0},K)>0$ such that
\EQQS{
\frac{d}{dt}E_{s}(u(t))\le CE_{s}(u(t))
}
on $[0,T]$.
\end{lem}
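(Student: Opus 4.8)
The plan is to differentiate $E_{s}(u(t))$ in time, substitute the equation (\ref{BO1pr}) for $\dt u$, and show that every resulting term is bounded by $C E_{s}(u(t))$, with the worst second-order contribution being cancelled by the time derivative of the correction term in $E_{s}$. Throughout, since $\sup_{t}E_{s_{0}}(u(t))\le K$ and $s_{0}>5/2$, Lemma \ref{comparison}(ii) gives $\|u(t)\|_{H^{s_{0}}}\lesssim_{K}1$; in particular $\|u\|_{\I}$, $\|\ds u\|_{\I}$ and $\|\ds^{2}u\|_{\I}$ are all controlled by the Gagliardo–Nirenberg inequality (Lemma \ref{G.N.}), so all the ``$H^{s_{0}}$-factors'' appearing in Lemmas \ref{comm.est.}, \ref{good2}, \ref{DL1} may be absorbed into the constant. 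Recall $E_{s}(u)=\|u\|^{2}+\|D^{s}u\|^{2}+b\|u\|^{4s+2}+\la(s)\int u(\HT D^{s}u)D^{s-2}\ds u\,dx$.

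First I would treat the low-order pieces. For $\tfrac{d}{dt}\|u\|^{2}=2\LR{\dt u,u}$: the dispersive term $\LR{\ds^{3}u,u}$ vanishes by integration by parts, $\LR{u^{2}\ds u,u}$ is handled by Lemma \ref{IBP}, the two Hilbert-transport terms are estimated by Lemma \ref{reduction} (writing them as $c_{1}\LR{\ds(u\HT\ds u),u}+c_{2}\LR{\HT\ds(u\ds u),u}$ and using the algebra identities), and the parabolic term $-\ga\LR{D^{5/2}u,u}=-\ga\|D^{5/4}u\|^{2}\le0$ has a good sign; so $\tfrac{d}{dt}\|u\|^{2}\lesssim_{K}\|u\|^{2}\le E_{s}(u)$, and likewise $\tfrac{d}{dt}\|u\|^{4s+2}\lesssim_{K}\|u\|^{4s+2}$. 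Next, for $\tfrac{d}{dt}\|D^{s}u\|^{2}=2\LR{\dt D^{s}u,D^{s}u}$, substituting the equation produces: the dispersive term $\LR{D^{s}\ds^{3}u,D^{s}u}=0$; the parabolic term $-2\ga\|D^{s+5/4}u\|^{2}\le0$; the cubic term $\LR{D^{s}(u^{2}\ds u),D^{s}u}$, which after commuting is $\LR{u^{2}D^{s}\ds u,D^{s}u}+$ lower order $\lesssim_{K}\|D^{s}u\|^{2}$ by Lemmas \ref{comm.est.2}(i) and \ref{IBP}; and the dangerous quadratic terms $c_{1}\LR{D^{s}\ds(u\HT\ds u),D^{s}u}+c_{2}\LR{\HT D^{s}\ds(u\ds u),D^{s}u}$. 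By Lemma \ref{DL1} (applied with $v=0$, so $w=u$), these equal $\bigl(c_{1}s+c_{2}(s+1)\bigr)\LR{\ds u\,\HT D^{s}\ds u,D^{s}u\bigr)$ up to terms bounded by $\|u\|_{H^{s_{0}}}\|D^{s}u\|^{2}\lesssim_{K}E_{s}(u)$. This leftover term $\LR{\ds u\,\HT D^{s}\ds u,D^{s}u}$ cannot be integrated by parts (the Hilbert transform obstruction discussed in the introduction), and it is the source of the derivative loss.

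The heart of the proof is then to show that $\tfrac{d}{dt}\bigl(\la(s)\int u(\HT D^{s}u)D^{s-2}\ds u\,dx\bigr)$ produces the term $-2\bigl(c_{1}s+c_{2}(s+1)\bigr)\LR{\ds u\,\HT D^{s}\ds u,D^{s}u}$ plus terms controlled by $E_{s}(u)$, so that it exactly cancels the leftover above (this is precisely why $\la(s')=-2((c_{1}+c_{2})s'+c_{2})/3$ is defined as it is: the factor $3$ comes from the dispersive interaction via Lemma \ref{good1}). Concretely, I would expand $\tfrac{d}{dt}\int u(\HT D^{s}u)D^{s-2}\ds u\,dx$ into three groups by which factor carries $\dt$, substitute $\dt u=\ds^{3}u-u^{2}\ds u-c_{1}\ds(u\HT\ds u)-c_{2}\HT\ds(u\ds u)-\ga D^{5/2}u$ into each, and organize the $2s+1$ resulting trilinear forms as follows: (a) the three terms where $\dt$ is replaced by $\ds^{3}$ combine, via Lemma \ref{good1} (with the self-adjointness of $\HT$ and $D^{s}$), into $3\LR{\ds u\,\HT D^{s}\ds u,D^{s-1}\ds^{2}u}$-type expressions that, after careful bookkeeping, yield a multiple of $\LR{\ds u\,\HT D^{s}\ds u,D^{s}u}$ with coefficient matching the target (this is the delicate algebraic step and the one I expect to be the main obstacle — keeping track of which terms are genuinely top-order versus which lose a derivative onto a low-frequency factor); (b) the terms where $\dt$ becomes one of the quadratic nonlinearities give at worst quadrilinear forms with two $D^{s}$'s and two $L^{\I}$-controlled factors, bounded by $\|u\|_{H^{s_{0}}}^{2}\|D^{s}u\|^{2}\lesssim_{K}E_{s}(u)$ via Lemmas \ref{comm.est.}, \ref{comm.est.2}, \ref{good2}; (c) the term with $-\ga D^{5/2}u$ is bounded using $\|D^{\al}U\|$-type estimates or crudely by $\ga\|u\|_{H^{s}}^{2}\lesssim E_{s}(u)$ uniformly in $\ga\in[0,1)$. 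Collecting (a)+(b)+(c) and combining with the $\|D^{s}u\|^{2}$ computation, all top-order terms cancel and we are left with $\tfrac{d}{dt}E_{s}(u)\le C(s,s_{0},K)\,E_{s}(u)$, which is the claim. The main technical care is in step (a): one must use Lemma \ref{good2}(ii) precisely to control the correction-term contributions, and must verify that $\la(s)$ has been chosen so the coefficients match — this is the computation the introduction alludes to when it says the correction term ``allows us to cancel out the worst term.''
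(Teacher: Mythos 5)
Your outline matches the paper's proof almost step for step: same splitting of $\tfrac{d}{dt}E_{s}(u)$, same use of Lemma \ref{DL1} with $v=0$ to isolate the bad term $-2(c_{1}s+c_{2}(s+1))\LR{\ds u\,\HT D^{s}\ds u,D^{s}u}=3\la(s)\LR{\ds u\,\HT D^{s}\ds u,D^{s}u}$, and the same cancellation of that term against the $\ds^{3}$-contributions to the correction term via Lemma \ref{good1} (which indeed give $3\LR{\ds u\,\HT D^{s}\ds u,D^{s-2}\ds^{2}u}=-3\LR{\ds u\,\HT D^{s}\ds u,D^{s}u}$). Your step (a), which you flag as the main obstacle, is in fact the routine part.

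The genuine gap is in your step (c). The contributions of the regularizing term $-\ga D^{5/2}u$ to the derivative of the correction term are \emph{not} all bounded by $\ga\|u\|_{H^{s}}^{2}$. When $\dt$ falls on the middle or last factor you get, up to constants, $\ga\LR{u\,\HT D^{s+5/2}u,D^{s-2}\ds u}$ and $\ga\LR{u\,\HT D^{s}u,D^{s+1/2}\ds u}$; these trilinear forms carry a total of $2s+3/2$ derivatives, so no distribution of derivatives puts them in $H^{s}\times H^{s}\times L^{\I}$, and the prefactor $\ga$ does not help since the constant must be uniform in $\ga$. The paper's proof handles them by Gagliardo--Nirenberg interpolation ($\|D^{s-2}\ds u\|_{\I}\lesssim\|u\|^{1-\te}\|D^{s+5/4}u\|^{\te}$ with $\te=(4s-2)/(4s+5)$) followed by Young's inequality, which splits each such term into $C\|u\|^{4s+7}$-type pieces plus a small multiple of $\ga\|D^{s+5/4}u\|^{2}$; the latter is then absorbed by the dissipative term $R_{5}=-2\ga\|D^{s+5/4}u\|^{2}$ coming from $\tfrac{d}{dt}\|D^{s}u\|^{2}$. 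This is why that negative term must be carried along rather than discarded by sign, as you propose to do. Without this absorption argument the estimate is not uniform in $\ga$, and the whole point of the lemma --- allowing $\ga\in[0,1)$ with a $\ga$-independent constant so that the limit $\ga\to0$ can be taken --- is lost.
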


\begin{proof}
First observe that
\EQQS{
\frac{d}{dt}\|u(t)\|^{2}
&=2\LR{\ds^{3}u-u^{2}\ds u-c_{1}\ds(u\HT\ds u)-c_{2}\HT\ds(u\ds u),u}\\
&\lesssim\|u(t)\|_{H^{1}}^{2}\le\|u(t)\|_{H^{s}}^{2}.
}
We can estimate the time derivative of $\|u(t)\|^{4s+2}$ in a similar manner.
Next we consider
\EQQS{
\frac{d}{dt}\|D^{s}u\|^{2}
&=2\LR{D^{s}\ds^{3}u,D^{s}u}
  -2\LR{D^{s}(u^{2}\ds u),D^{s}u}
  -2c_{1}\LR{D^{s}\ds(u\HT\ds u),D^{s}u}\\
&\quad-2c_{2}\LR{\HT D^{s}\ds(u\ds u),D^{s}u}-2\ga\LR{D^{s+5/2}u,D^{s}u}\\
&=:R_{1}+R_{2}+R_{3}+R_{4}+R_{5}.
}
It is clear that $R_{1}=0$.
We have
\EQQS{
|R_{2}|\le2|\LR{[D^{s},u^{2}]\ds u,D^{s}u}|+2|\LR{u^{2}D^{s}\ds u,D^{s}u}|\lesssim\|u\|_{H^{s}}^{2}
}
by $(i)$ of Lemma \ref{comm.est.2} and Lemma \ref{IBP}.
Lemma \ref{DL1} with $v=0$ shows that
\EQQS{
|R_{3}+2c_{1}s\LR{\ds u\HT D^{s}\ds u,D^{s}u}|+|R_{4}+2c_{2}(s+1)\LR{\ds u\HT D^{s}\ds u,D^{s}u}|
\lesssim\|u\|_{H^{s}}^{2}
}
Finally, we have $R_{5}=-2\ga\|D^{s+5/4}u\|^{2}$.
Therefore, we have
\EQS{\label{est.3.3}
\frac{d}{dt}\|D^{s}u\|^{2}
\le C\|u\|_{H^{s}}^{2}+3\la(s)\int_{\mathbb{T}}\ds u(\HT D^{s}\ds u)D^{s}udx-2\ga\|D^{s+5/4}u\|^{2},
}
where $\la(s)$ is defined in Definition \ref{def1}.
Next we evaluate the correction term.
We put
\EQQS{
&\frac{d}{dt}\LR{u\HT D^{s}u,D^{s-2}\ds u}\\
&=\LR{\dt u\HT D^{s}u,D^{s-2}\ds u}+\LR{u\HT D^{s}\dt u,D^{s-2}\ds u}
 +\LR{u\HT D^{s}u,D^{s-2}\ds\dt u}\\
&=:R_{6}+R_{7}+R_{8}.
}
Moreover, we set
\EQQS{
R_{6}
&=\LR{\ds^{3}u\HT D^{s}u,D^{s-2}\ds u}-\LR{u^{2}\ds u\HT D^{s}u,D^{s-2}\ds u}\\
&\quad-c_{1}\LR{\ds(u\HT\ds u)\HT D^{s}u,D^{s-2}\ds u}-c_{2}\LR{(\HT\ds(u\ds u))\HT D^{s}u,D^{s-2}\ds u}\\
&\quad-\ga\LR{D^{5/2}u\HT D^{s}u,D^{s-2}\ds u}
=:R_{61}+R_{62}+R_{63}+R_{64}+R_{65}.
}
And we set
\EQQS{
R_{7}
&=\LR{u\HT D^{s}\ds^{3}u,D^{s-2}\ds u}-\LR{u\HT D^{s}(u^{2}\ds u),D^{s-2}\ds u}\\
&\quad-c_{1}\LR{u\HT D^{s}\ds(u\HT\ds u),D^{s-2}\ds u}+c_{2}\LR{u D^{s}\ds(u\ds u),D^{s-2}\ds u}\\
&\quad-\ga\LR{u\HT D^{s+5/2}u,D^{s-2}\ds u}
=:R_{71}+R_{72}+R_{73}+R_{74}+R_{75}.
}
Finally, we set
\EQQS{
R_{8}
&=\LR{u\HT D^{s}u,D^{s-2}\ds^{4}u}-\LR{u\HT D^{s}u,D^{s-2}\ds(u^{2}\ds u)}\\
&\quad+c_{1}\LR{u\HT D^{s}u,D^{s}(u\HT\ds u)}+c_{2}\LR{u\HT D^{s}u,\HT D^{s}(u\ds u)}\\
&\quad-\ga\LR{u\HT D^{s}u,D^{s+1/2}\ds u}
=:R_{81}+R_{82}+R_{83}+R_{84}+R_{85}.
}
Lemma \ref{good1} shows that
\EQQS{
R_{61}+R_{71}+R_{81}
=&3\LR{\ds u\HT D^{s}\ds u,D^{s-2}\ds^{2}u}
=-3\LR{\ds u\HT D^{s}\ds u,D^{s}u},
}
which cancels out the second term in the right hand side in (\ref{est.3.3}) by multiplying $\la(s)$.
It is easy to see that $|R_{62}|+|R_{63}|+|R_{64}|\lesssim\|u\|_{H^{s}}^{2}$.
By $(i)$ of Lemma \ref{comm.est.2}, we have $|R_{72}|+|R_{82}|\lesssim\|u\|_{H^{s}}^{2}$.
We see from $(ii)$ of Lemma \ref{good2} that $|R_{73}|\lesssim\|u\|_{H^{s}}^{2}$.
Lemma \ref{IBP} and $(i)$ of Lemma \ref{comm.est.2} give $|R_{74}|+|R_{83}|\lesssim\|u\|_{H^{s}}^{2}$.
For $R_{84}$, it follows from $(i)$ of Lemma \ref{good2} that $|R_{84}|\lesssim\|u\|_{H^{s}}^{2}$.
Finally, we estimate $R_{65},R_{75}$ and $R_{85}$.
Lemma \ref{G.N.} implies that
\[\|D^{s-2}\ds u\|_{\I}\le C\|D^{s-2}u\|^{1/4}\|D^{s}u\|^{3/4}
  \le C\|u\|^{1-(4s-2)/(4s+5)}\|D^{s+5/4}u\|^{(4s-2)/(4s+5)}.\]
Then we have
\EQQS{
|R_{65}|
&\le\ga\|D^{5/2}u\|\|D^{s}u\|\|D^{s-2}\ds u\|_{\I}\\
&\le\ga C\|u\|^{1+2/(4s+5)}\|D^{s+5/4}u\|^{2-2/(4s+5)}
\le C\|u\|^{4s+7}+\frac{\ga^{1+1/4(s+1)}}{3}\|D^{s+5/4}u\|^{2}.
}
A similar argument yields
\EQQS{
|R_{75}|+|R_{85}|
\le C\|u\|^{4s+7}+C\|u\|^{2s+9/2}+\frac{2\ga^{1+1/4(s+1)}}{3}\|D^{s+5/4}u\|^{2}.
}
Therefore, the fact that $\ga\in[0,1)$ shows that
\[\frac{d}{dt}E_{s}(u(t))\le C\|u(t)\|_{H^{s}}^{2}\le CE_{s}(u(t))\]
on $[0,T]$.
Note that the implicit constant does not depend on $\ga$.
This completes the proof.
\end{proof}

Now, we are ready to prove Proposition \ref{ene.est.}.

\begin{proof}[Proof of Proposition \ref{ene.est.}]
Assume that the set $F=\{t\ge0; E_{s_{0}}(u(t))>2E_{s_{0}}(\vp)\}$ is not empty.
Set $T_{\ga}^{*}=\inf F$.
Note that $0<T_{\ga}^{*}\le T_{\ga}$ and $E_{s_{0}}(u(t))\le2E_{s_{0}}(\vp)$ on $[0,T_{\ga}^{*}]$.
Assume that there exists $t'\in[0,T_{\ga}^{*}]$ such that
$E_{s_{0}}(u(t'))>2E_{s_{0}}(\vp)$.
This implies that $t'\ge T_{\ga}^{*}$ by the definition of $T_{\ga}^{*}$.
Then we have $t'=T_{\ga}^{*}$.
Thus, $\sup_{t\in[0,T_{\ga}^{*}]}E_{s_{0}}(u(t))\le C(\|\vp\|_{H^{s_{0}}})$ by $(ii)$ of Lemma \ref{comparison}.
By Proposition \ref{pre.ene.}, there exists $C_{s}'=C(s,s_{0},\|\vp\|_{H^{s_{0}}})$ such that
\[\frac{d}{dt}E_{s}(u(t))\le C_{s}'E(u(t))\]
on $[0,T_{\ga}^{*}]$.
The Gronwall inequality gives that
\begin{align}\label{eq3.62}
E_{s}(u(t))\le E_{s}(\vp)\exp(C_{s}'t)
\end{align}
on $[0,T_{\ga}^{*}]$.
Here, we put $T=\min\{(2C_{s_{0}}')^{-1},T_{\ga}^{*}\}$.
Then (\ref{eq3.62}) with $s=s_{0}$ shows that
\[E_{s_{0}}(u(t))\le E_{s_{0}}(\vp)\exp(2^{-1})<2E_{s_{0}}(\vp),\]
on $[0,T]$.
By the definition of $T_{\ga}^{*}$ and the continuity of $E_{s_{0}}(u(t))$,
we obtain $0<T=(2C_{s_{0}}')^{-1}<T_{\ga}^{*}\le T_{\ga}$.
If $F$ is empty, then we have $T_{\ga}^{*}=T_{\ga}=\I$.
In particular, we can take $T=(2C_{s_{0}}')^{-1}<\I$, which concludes the proof.
\end{proof}

%%%%%%%%%%%%%%%%%%%%%%%%%%%%%%%%%%%%%
%%%%%%%%%%%%%%%%%          section 4         %%%%%%%%%%%%%%%%%%%%
%%%%%%%%%%%%%%%%%%%%%%%%%%%%%%%%%%%%%

\section{uniqueness, persistence and continuous dependence}

In this section, we prove Theorem \ref{main}.
We first show the existence of the solution of (\ref{BO1}) by the limiting procedure.
We also prove the uniqueness and the persistence property $u\in C([0,T];H^{s}(\T))$.
Then we estiamte difference between two solutions of (\ref{BO1BS})--(\ref{BO2BS}) in $H^{s}(\T)$,
which is essential to show the continuous dependence.

\begin{lem}\label{en.dif.L2}
Let $s\ge s_{0}>5/2$, $\ga_{j}\in(0,1)$, $T>0$, $u_{j}\in C([0,T];H^{s}(\T))\cap C((0,T];H^{s+1}(\T))$
satisfy (\ref{BO1pr}) with $\ga=\ga_{j}$
on $[0,T]\times\T$ and $\sup_{t\in[0,T]}\|u_{j}(t)\|_{H^{s_{0}}}\le K$ for $K>0$, $j=1,2$.
Then there exists $C=C(K,s)$ such that
\begin{align}\label{E.L2}
\frac{d}{dt}\tilde{E}(u_{1},u_{2})\le C(\tilde{E}(u_{1},u_{2})+\max\{\ga_{1}^{2},\ga_{2}^{2}\})
\end{align}
on $[0,T]$.
\end{lem}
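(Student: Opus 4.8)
The plan is to differentiate $\tilde E(u_1,u_2)$ and estimate each of its three pieces. By Lemma \ref{comparison}\,(iii) one has $\tfrac12\|w\|^2\le\tilde E(u_1,u_2)\le C(K)\|w\|^2$ for $w:=u_1-u_2$, so it suffices to prove $\frac{d}{dt}\tilde E(u_1,u_2)\le C(K)(\|w\|^2+\max\{\gamma_1^2,\gamma_2^2\})$. Subtracting the two copies of (\ref{BO1pr}) and splitting each nonlinearity ``one factor at a time'' (so that no quadratic-in-$w$ term carrying a derivative loss is created), $w$ solves
\[
\partial_t w=\ds^3 w-(u_1^2\ds u_1-u_2^2\ds u_2)-c_1\ds(u_1\HT\ds w+w\HT\ds u_2)-c_2\HT\ds(u_1\ds w+w\ds u_2)-\gamma_1 D^{5/2}u_1+\gamma_2 D^{5/2}u_2 .
\]
Throughout, $u_1,u_2,\ds u_j,\ds^2 u_j$ are bounded in $L^\infty$ by $C(K)$ and $\|D^{5/2}u_j\|\le C(K)$, because $s_0>5/2$.

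First consider $\frac{d}{dt}\|w\|^2=2\langle\partial_t w,w\rangle$. The Airy term vanishes by antisymmetry; the polynomial term is $\le C(K)\|w\|^2$ after one integration by parts. The ``transport'' piece of the $c_1$-nonlinearity is $c_1\langle w\HT\ds u_2,\ds w\rangle=-\tfrac{c_1}{2}\langle\HT\ds^2 u_2,w^2\rangle\le C(K)\|w\|^2$ since $\HT\ds^2 u_2\in L^\infty$ ($s_0>5/2$). The two ``resonant'' pieces combine into $2(c_1-c_2)\langle u_1\HT\ds w,\ds w\rangle$; using $\langle f\HT g,g\rangle=-\tfrac12\langle[\HT,f]g,g\rangle$ with $g=\ds w$, one integration by parts, and $[\ds,[\HT,u_1]]=[\HT,\ds u_1]$, this is reduced to $\langle[\HT,\ds u_1]\ds w,w\rangle+\langle[\HT,u_1]\ds^2 w,w\rangle$, each of which is $\le C(K)\|w\|^2$ by Lemma \ref{comm.est.4} with its parameter taken to be $s_0-2>\tfrac12$. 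The remaining ``transport'' piece of the $c_2$-nonlinearity, $-2c_2\langle(\ds u_2)\HT\ds w,w\rangle$, is kept untouched: it is of $H^{1/2}$-strength and will be cancelled below. Finally, writing the parabolic term as $-\gamma_1 D^{5/2}w+(\gamma_2-\gamma_1)D^{5/2}u_2$ tested against $2w$, the first part is $-2\gamma_1\|D^{5/4}w\|^2\le0$ (discarded), and the second is bounded via $2ab\le a^2+b^2$ with $a=|\gamma_1-\gamma_2|\,\|D^{5/2}u_2\|\le C(K)\max\{\gamma_1,\gamma_2\}$, $b=\|w\|$, hence by $\|w\|^2+C(K)\max\{\gamma_1^2,\gamma_2^2\}$. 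The term $c\|w\|_{H^{-1}}^2$ is treated the same way and is easier: at the $H^{-1}$ level every nonlinear term is subcritical (the two-derivative Hilbert terms become pairable once the order $-1$ operators $\LR{D}^{-2}\ds$, $\HT\ds\LR{D}^{-2}$ are moved onto the low-regularity factor and a product estimate in $H^{-1}$ is used), so $\frac{d}{dt}(c\|w\|_{H^{-1}}^2)\le C(K)(\|w\|^2+\max\{\gamma_1^2,\gamma_2^2\})$.

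The main work is the correction term $-\la(0)\int u_1(\LR{D}^{-1}w)w\,dx$. Differentiating by Leibniz and substituting the equations for $\partial_t u_1$ and $\partial_t w$, the three Airy contributions are, since $\LR{D}^{-1}$ commutes with $\ds$, exactly $-\la(0)\big[\langle\ds^3 u_1\cdot\LR{D}^{-1}w,w\rangle+\langle u_1\,\ds^3(\LR{D}^{-1}w),w\rangle+\langle u_1(\LR{D}^{-1}w),\ds^3 w\rangle\big]$, to which Lemma \ref{good1} applies with $f=u_1$, $g=\LR{D}^{-1}w$, $h=w$, collapsing them to $-3\la(0)\langle\ds u_1\cdot\ds\LR{D}^{-1}w,\ds w\rangle$. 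By Lemma \ref{freq.est.} (with $k=0$ and $k=1$), $\ds\LR{D}^{-1}w=-\HT w+r$ with $\|r\|\le\|w\|_{H^{-1}}\le\|w\|$ and $\|\ds r\|\le\|w\|$; so after one integration by parts this Airy contribution equals $-3\la(0)\langle(\ds u_1)\HT\ds w,w\rangle$ up to terms $\le C(K)\|w\|^2$. All non-Airy contributions of the three Leibniz pieces are estimated routinely — $\LR{D}^{-1}$ and $\ds\LR{D}^{-1}$ are of order $\le0$, $u_j,\ds u_j,\ds^2 u_j\in L^\infty$, $H^{-1}$-product estimates absorb the $\ds w$ factors of the ``good-splitting'' terms, Lemma \ref{IBP} and Lemma \ref{comm.est.2}\,(i) handle the commutators against $u_1^2$, Lemma \ref{comm.est.4} disposes of the residual Hilbert terms, and the parabolic parts (using $\|\LR{D}^{-1}D^{5/2}u_j\|\le C(K)$, $3/2<s_0$) are absorbed into $\|w\|^2+C(K)\max\{\gamma_1^2,\gamma_2^2\}$ by Young. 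Thus $\frac{d}{dt}\big(-\la(0)\int u_1(\LR{D}^{-1}w)w\big)=-3\la(0)\langle(\ds u_1)\HT\ds w,w\rangle+R$ with $|R|\le C(K)(\|w\|^2+\max\{\gamma_1^2,\gamma_2^2\})$. It remains to combine the two surviving $H^{1/2}$-strength terms: writing $u_1=u_2+w$,
\[
-2c_2\langle(\ds u_2)\HT\ds w,w\rangle-3\la(0)\langle(\ds u_1)\HT\ds w,w\rangle=-(2c_2+3\la(0))\langle(\ds u_2)\HT\ds w,w\rangle-3\la(0)\langle(\ds w)\HT\ds w,w\rangle .
\]
Since $\la(0)=-\tfrac23 c_2$, the coefficient $2c_2+3\la(0)$ vanishes, and the leftover cubic term $\langle(\ds w)\HT\ds w,w\rangle=\langle w\HT\ds w,\ds w\rangle=-\tfrac12\langle[\HT,w]\ds w,\ds w\rangle$ is $\le C(K)\|w\|^2$ by Lemma \ref{comm.est.4} again (routing through $[\HT,\ds w]\ds w$ and $[\HT,w]\ds^2 w$, using $\|w\|_{H^{s_0}}\le2K$). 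Collecting the three displays and using $\|w\|^2\le2\tilde E(u_1,u_2)$ gives (\ref{E.L2}).

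The step I expect to be the crux is this last cancellation and the bookkeeping that feeds it: one must recognise that the seemingly harmless transport piece $\langle(\ds u_2)\HT\ds w,w\rangle$ from the $c_2$-nonlinearity is \emph{not} controlled by $\|w\|^2$ on its own, and that it is annihilated precisely by the Airy part of the derivative of the correction term — which forces $\la(0)=-\tfrac23 c_2$ and is the $L^2$-level analogue of the cancellation performed with $E_s$ in Lemma \ref{pre.ene.}. A secondary but essential point is that the $\gamma$-error must be produced as a genuine square, by applying Young's inequality to the whole product $|\gamma_1-\gamma_2|\,\|D^{5/2}u_j\|\cdot\|w\|$ with $\|D^{5/2}u_j\|\le C(K)$ (this is where $5/2<s_0$ is used), rather than by absorbing into the dissipative term $-\gamma_1\|D^{5/4}w\|^2$, which would only yield $\max\{\gamma_1,\gamma_2\}$.
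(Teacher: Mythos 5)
Your argument is correct and follows essentially the same route as the paper: the same correction term with $\la(0)=-\tfrac23 c_2$, the same identification of $\langle \ds u\,\HT\ds w,w\rangle$ as the only non-perturbative term, the same collapse of the Airy contributions via Lemma \ref{good1} followed by the replacement $\LR{D}^{-1}\ds\approx-\HT$ from Lemma \ref{freq.est.}, and the same reliance on Lemma \ref{comm.est.4} (with parameter $s_0-2$) and Young's inequality for the $\max\{\ga_1^2,\ga_2^2\}$ error. The only differences are cosmetic: you split the nonlinearity asymmetrically in $u_1,u_2$ where the paper symmetrizes with $z=u_1+u_2$, and you merge the two resonant pieces into $2(c_1-c_2)\langle u_1\HT\ds w,\ds w\rangle$ before applying the commutator identity that the paper applies to each piece separately.
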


\begin{proof}
Set $w:=u_{1}-u_{2}$ so that $w$ satisfies the following equation:
\EQS{\label{eq.for.w2}
\begin{aligned}
&\dt w-\ds^{3}w+\frac{1}{3}\ds\{(u_{1}^{2}+u_{1}u_{2}+u_{2}^{2})w\}\\
&+\frac{c_{1}}{2}\ds(w\HT\ds z)+\frac{c_{1}}{2}\ds(z\HT\ds w)
 +\frac{c_{2}}{2}\HT\ds(w\ds z)+\frac{c_{2}}{2}\HT\ds(z\ds w)\\
&=-\ga_{1}D^{5/2}w-(\ga_{1}-\ga_{2})D^{5/2}u_{2},
\end{aligned}
}
where $z=u_{1}+u_{2}$.
%First we evaluate the time derivative fo $H^{-1}$-norm of $w$.
%Set
%\EQQS{
%\frac{d}{dt}\|w\|_{H^{-1}}^{2}
%=&2\LR{\LR{D}^{-1}\ds^{3}w,\LR{D}^{-1}w}
% -\frac{2c_{1}}{3}\LR{\LR{D}^{-1}\ds\{(u_{1}^{2}+u_{1}u_{2}+u_{2}^{2})w\},\LR{D}^{-1}w}\\
% &-c_{2}\LR{\LR{D}^{-1}\ds(w\HT\ds z),\LR{D}^{-1}w}-c_{2}\LR{\LR{D}^{-1}\ds(z\HT\ds w),\LR{D}^{-1}w}\\
% &-c_{3}\LR{\HT\LR{D}^{-1}\ds(w\ds z),\LR{D}^{-1}w}-c_{3}\LR{\HT\LR{D}^{-1}\ds(z\ds w),\LR{D}^{-1}w}\\
% &-2\ga_{1}\LR{\LR{D}^{-1}D^{5/2}w,\LR{D}^{-1}w}-2(\ga_{1}-\ga_{2})\LR{\LR{D}^{-1}D^{5/2}u_{2},\LR{D}^{-1}w}\\
%=:&R_{1}+R_{2}+R_{3}+R_{4}+R_{5}+R_{6}+R_{7}+R_{8}.
%}
%It is clear that $R_{1}=0$.
%The fact that $\|\LR{D}^{-1}\ds\|_{L^{2}\to L^{2}}\le 1$ gives that $R_{2},R_{3},R_{5}\lesssim\|w\|^{2}$.
%Moreover, we see from the fact $z\ds w=\ds(zw)-w\ds z$ that $R_{4},R_{6}\lesssim\|w\|^{2}$.
%For viscous terms, note that $R_{7}=-\ga_{1}\|D^{5/4}w\|_{H^{-1}}^{2}\le0$.
%The Young inequality gives $R_{8}\lesssim\|w\|^{2}+\max\{\ga_{1}^{2},\ga_{2}^{2}\}$.
%Summing these estimates, we obtain
By the presence of the operator $\LR{D}^{-1}$, we can easily obtain
\[\frac{d}{dt}\|\LR{D}^{-1}w\|^{2}\lesssim\|w\|^{2}+\max\{\ga_{1}^{2},\ga_{2}^{2}\}.\]
Next, we estimate the $L^{2}$-norm of $w$.
Set
\EQQS{
\frac{d}{dt}\|w\|^{2}
&=2\LR{\ds^{3}w,w}
 -\frac{2}{3}\LR{\ds\{(u_{1}^{2}+u_{1}u_{2}+u_{2}^{2})w\},w}
 -c_{1}\LR{\ds(w\HT\ds z),w}\\
&\quad-c_{1}\LR{\ds(z\HT\ds w),w}
 -c_{2}\LR{\HT\ds(w\ds z),w}-c_{2}\LR{\HT\ds(z\ds w),w}\\
&\quad-2\ga_{1}\LR{D^{5/2}w,w}-2(\ga_{1}-\ga_{2})\LR{D^{5/2}u_{2},w}\\
&=:R_{9}+R_{10}+R_{11}+R_{12}+R_{13}+R_{14}+R_{15}+R_{16}.
}
Again, it is clear that $R_{9}=0$.
By Lemma \ref{IBP}, we have $|R_{10}|+|R_{11}|\lesssim\|w\|^{2}$.
Note that
\EQQS{
&\LR{[\HT,\ds z]\ds w,w}+\LR{[\HT,z]\ds^{2}w,w}\\
&=\LR{\HT(\ds z\ds w),w}-\LR{\ds z\HT\ds w,w}+\LR{\HT(z\ds^{2}w),w}-\LR{z\HT\ds^{2}w,w}\\
&=\LR{\ds(\ds z\HT w),w}-\LR{\ds z\HT\ds w,w}-\LR{\ds^{2}(z\HT w),w}-\LR{z\HT\ds^{2}w,w}\\
&=-2\LR{\ds(z\HT\ds w),w}.
}
Then Lemma \ref{comm.est.4} shows that $|R_{12}|+|R_{14}|\lesssim\|w\|^{2}$.
We can reduce $R_{13}$ to
\[R_{13}=-2c_{2}\LR{\ds u_{1}\HT\ds w,w}-c_{2}\LR{\ds w\HT\ds w,w}\]
since $z=2u_{1}-w$.
The last term in the right hand side can be bounded by $\lesssim\|w\|^{2}$ by using Lemma \ref{IBP}.
Observe that $R_{15}=-\ga_{1}\|D^{5/4}w\|^{2}\le0$
and that $|R_{16}|\lesssim\|w\|^{2}+\max\{\ga_{1}^{2},\ga_{2}^{2}\}$.
Therefore, we have
\[\frac{d}{dt}\|w\|^{2}\le C\|w\|^{2}+3\la(0)\int_{\T}\ds u_{1}(\HT\ds w)wdx+\max\{\ga_{1}^{2},\ga_{2}^{2}\}.\]
The correction term in $\ti{E}$ cannot exactly cancel out the second term,
but Lemma \ref{freq.est.} shows that the difference is harmless.
Set
\EQQS{
\frac{d}{dt}\LR{u_{1}\LR{D}^{-1}w,w}
&=\LR{\dt u_{1}\LR{D}^{-1}w,w}+\LR{u_{1}\LR{D}^{-1}\dt w,w}+\LR{u_{1}\LR{D}^{-1}w,\dt w}\\
&=:R_{17}+R_{18}+R_{19}.
}
Moreover, we set $R_{171}=\LR{\ds^{3}u_{1}\LR{D}^{-1}w,w}$ and set
\EQQS{
R_{18}
&=\LR{u_{1}\LR{D}^{-1}\ds^{3}w,w}-\frac{1}{3}\LR{u_{1}\LR{D}^{-1}\ds\{(u_{1}^{2}+u_{1}u_{2}+u_{2}^{2})w\},w}\\
&\quad-\frac{c_{1}}{2}\LR{u_{1}\LR{D}^{-1}\ds(w\HT\ds z),w}-\frac{c_{1}}{2}\LR{u_{1}\LR{D}^{-1}\ds(z\HT\ds w),w}\\
&\quad-\frac{c_{2}}{2}\LR{u_{1}\LR{D}^{-1}\HT\ds(w\ds z),w}-\frac{c_{2}}{2}\LR{u_{1}\LR{D}^{-1}\HT\ds(z\ds w),w}\\
&\quad-\ga_{1}\LR{u_{1}\LR{D}^{-1}D^{5/2}w,w}-(\ga_{1}-\ga_{2})\LR{u_{1}\LR{D}^{-1}D^{5/2}u_{2},w}\\
&=:R_{181}+R_{182}+R_{183}+R_{184}+R_{185}+R_{186}+R_{187}+R_{188}
}
We set $R_{19k}$ for $k=1,\dots,8$ in the same manner as above.
Lemma \ref{good1} shows that
\EQQS{
R_{171}+R_{181}+R_{191}
%=&3\LR{\ds u_{1}\LR{D}^{-1}\ds w,\ds w}\\
=-3\LR{\ds u_{1}\LR{D}^{-1}\ds^{2}w,w}-3\LR{\ds^{2}u_{1}\LR{D}^{-1}\ds w,w},
}
which together with Lemma \ref{freq.est.} shows that $|R_{13}-\la(0)(R_{171}+R_{181}+R_{191})|\lesssim\|w\|^{2}$.
It is easy to see that
\EQQS{
|\LR{(u_{1}^{2}\ds u_{1}+c_{1}\ds(u_{1}\HT\ds u_{1})
  +c_{2}\HT\ds(u_{1}\ds u_{1})+\ga_{1}D^{5/2}u_{1})\LR{D}^{-1}w,w}|
\lesssim\|w\|^{2}.
}
We have $|R_{182}|+|R_{183}|+|R_{185}|+|R_{192}|+|R_{193}|+|R_{195}|\lesssim\|w\|^{2}$
because of the presence of the operator $\LR{D}^{-1}$.
In order to handle $R_{184},R_{186},R_{194}$ and $R_{196}$,
we see from Lemma \ref{freq.est.} and $(i)$ of Lemma \ref{comm.est.2} that
%\EQQS{
%R_{184}
%&=\frac{3}{2}\LR{u_{1}\LR{D}^{-1}\ds^{2}(z\HT w),w}-\frac{3}{2}\LR{u_{1}\LR{D}^{-1}\ds(\ds z\HT w),w}\\
%&\lesssim\LR{u_{1}[\LR{D}^{-1}\ds^{2},z]\HT w,w}+\LR{u_{1}z\HT\LR{D}^{-1}\ds^{2}w,w}+\|w\|^{2}\\
%&\lesssim\LR{u_{1}z\HT(\LR{D}^{-1}\ds^{2}w+\HT\ds w),w}+\LR{u_{1}z\ds w,w}+\|w\|^{2}
%\lesssim\|w\|^{2.}
%}
%Similarly, we have
%\EQQS{
%R_{194}
%&=\frac{3}{2}\LR{u_{1}\LR{D}^{-1}w,\ds^{2}(z\HT w)}-\frac{3}{2}\LR{u_{1}\LR{D}^{-1}w,\ds(\ds z\HT w)}\\
%&\lesssim\LR{\ds^{2}(u_{1}\LR{D}^{-1}w),z\HT w}+\|w\|^{2}\\
%&\lesssim\LR{u_{1}(\LR{D}^{-1}\ds^{2}w+\HT\ds w),z\HT w}-\LR{u_{1}\HT\ds w,z\HT w}+\|w\|^{2}\\
%&\lesssim\LR{u_{1}\ds z+z\ds u_{1},(\HT w)^{2}}+\|w\|^{2}
%\lesssim\|w\|^{2}.
%}
%On the other hand, Lemma \ref{freq.est.}, $(i)$ of Lemma \ref{comm.est.2}, and \ref{IBP} imply that
%\EQQS{
%R_{186}
%&=\frac{3}{2}\LR{u_{1}\HT\LR{D}^{-1}\ds^{2}(zw),w}-\frac{3}{2}\LR{u_{1}\HT\LR{D}^{-1}\ds(\ds z\cdot w),w}\\
%&\lesssim\LR{u_{1}[\LR{D}^{-1}D\ds,z]w,w}+\LR{u_{1}z\LR{D}^{-1}D\ds w,w}+\|w\|^{2}\\
%&\lesssim\LR{\HT(\LR{D}^{-1}\ds^{2}w+\HT\ds w),u_{1}zw}+\LR{\ds w,u_{1}zw}+\|w\|^{2}
%\lesssim\|w\|^{2}
%}
%and that
\EQQS{
&|R_{196}|
=\left|-\frac{c_{2}}{2}\LR{u_{1}\LR{D}^{-1}w,\HT\ds^{2}(zw)}
 +\frac{c_{2}}{2}\LR{u_{1}\LR{D}^{-1}w,\HT\ds(\ds zw)}\right|\\
&\lesssim|\LR{u_{1}\LR{D}^{-1}\ds w,(\HT\ds+\LR{D}^{-1}\ds^{2})(zw)}|
         +|\LR{u_{1}\LR{D}^{-1}\ds w,\LR{D}^{-1}\ds^{2}(zw)}|+\|w\|^{2}\\
&\lesssim|\LR{u_{1}\LR{D}^{-1}\ds w,[\LR{D}^{-1}\ds^{2},z]w}|+|\LR{u_{1}z\LR{D}^{-1}\ds w,\LR{D}^{-1}\ds^{2}w}|+\|w\|^{2}
\lesssim\|w\|^{2}.
}
We can obtain $|R_{184}|+|R_{186}|+|R_{194}|\lesssim\|w\|^{2}$ from a similar argument.
Finally, it is easy to see that
$|R_{187}|+|R_{188}|+|R_{197}|+|R_{198}|\lesssim\|w\|^{2}+\max\{\ga_{1}^{2},\ga_{2}^{2}\}$.
Summing these estimates above and applying $(iii)$ of Lemma \ref{comparison}, we obtain (\ref{E.L2}),
which concludes the proof.
\end{proof}

Now we obtain the solution to (\ref{BO1})--(\ref{BO2}).
Let $\vp\in H^{s}(\T)$ and let $\ga_{1},\ga_{2}\in(0,1)$.
Let $u_{\ga_{j}}$ be the solution to (\ref{BO1pr})--(\ref{BO2pr}) with $\ga=\ga_{j}$ for $j=1,2$,
obtained by Proposition \ref{para.regu.}.
Note that $\tilde{E}(u_{\ga_{1}}(0),u_{\ga_{2}}(0))=\tilde{E}(\vp,\vp)=0$.
Proposition \ref{ene.est.} shows that there exists $T=T(s_{0},\|\vp\|_{H^{s_{0}}})$
such that (\ref{single1}) holds.
We see from $(iii)$ of Lemma \ref{comparison} and Lemma \ref{en.dif.L2} that
\begin{align*}
\sup_{t\in[0,T]}\|u_{\ga_{1}}(t)-u_{\ga_{2}}(t)\|^{2}
&\le\sup_{t\in[0,T]}\tilde{E}(u_{\ga_{1}}(t),u_{\ga_{2}}(t))
\le C\max\{\ga_{1}^{2},\ga_{2}^{2}\}\to0
\end{align*}
as $\ga_{1},\ga_{2}\to+0$.
This implies that there exists $u\in C([0,T];L^{2}(\T))$ such that
\[u_{\ga}\to u\ \ {\rm in}\ \ C([0,T];L^{2}(\T))\ \ {\rm as}\ \ \ga\to0.\]
The above convergence can be verified in $C([0,T];H^{r}(\T))$ for any $r<s$
by interpolating with $L^{\I}([0,T];H^{s}(\T))$.
It is clear that $u$ satisfies (\ref{BO1})--(\ref{BO2}) on $[0,T]$.

For the proof of the following uniqueness result, see Thorem 6.22 in \cite{Iorio}.
%Next, we show that our solution $u$ is unique:

%%%%%%%%%%%%%%%%%%%%%%%%%%%%%%%%%%%%%%%%%%%%%%
%%%%%%%%%%%%%%%%%%%%%%%%%%%%%%%%%%%%%%%%%%%%%%
%%%%%%%%%%%%%%%%%%%           The following lemma can be omitted??             %%%%%%%%%%%%%%%%%%%%%%%%%%%
%%%%%%%%%%%%%%%%%%%%%%%%%%%%%%%%%%%%%%%%%%%%%%
%%%%%%%%%%%%%%%%%%%%%%%%%%%%%%%%%%%%%%%%%%%%%%

\begin{lem}[Uniqueness]\label{uniq.}
Let $\delta>0$ and $\e>0$,
$u_{j}\in L^{\I}([0,\delta];H^{5/2+\e}(\T))$ satisfy (\ref{BO1}) on $[0,\delta]$ with $u_{1}(0)=u_{2}(0)$ and satisfy
\[u_{j}\in C([0,\delta];H^{2}(\T))\cap C^{1}([0,\delta];H^{-1}(\T))\]
for $j=1,2$.
Then $u_{1}\equiv u_{2}$ on $[0,\delta]$.
\end{lem}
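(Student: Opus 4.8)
The plan is to run a difference--energy estimate at the $L^{2}$/$H^{-1}$ level, exactly as in the proof of Lemma~\ref{en.dif.L2} but with the parabolic parameters set to zero. Write $w:=u_{1}-u_{2}$ and $z:=u_{1}+u_{2}$, so that $w(0)=0$ and $w$ solves \eqref{eq.for.w2} with $\ga_{1}=\ga_{2}=0$. The hypotheses give a bound $\|u_{j}\|_{L^{\I}([0,\delta];H^{5/2+\e})}\le K$ and $w\in C([0,\delta];H^{2}(\T))\cap C^{1}([0,\delta];H^{-1}(\T))$. Since the functional $\ti{E}(u_{1},u_{2})$ of Definition~\ref{def1} only involves $\|w\|_{H^{-1}}$, $\|w\|$ and the correction term $\int u_{1}(\LR{D}^{-1}w)w\,dx$, this regularity is exactly enough to make $t\mapsto\ti{E}(u_{1}(t),u_{2}(t))$ continuous (indeed $C^{1}$) and to give meaning to every pairing appearing when we differentiate it; in particular $\langle\ds^{3}w,w\rangle$ and $\langle\ds^{3}w,\LR{D}^{-1}w\rangle$ are read as $H^{-1}$--$H^{1}$ and $H^{-1}$--$H^{3}$ dual pairings.

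The key step is to show that there is $C=C(\e,K)>0$ with
\[
\frac{d}{dt}\ti{E}(u_{1}(t),u_{2}(t))\le C\,\ti{E}(u_{1}(t),u_{2}(t))\qquad\text{for a.e. }t\in[0,\delta].
\]
This repeats the computation in the proof of Lemma~\ref{en.dif.L2} specialised to $\ga_{1}=\ga_{2}=0$, so that all terms carrying a factor $\ga_{j}$ or $\ga_{1}-\ga_{2}$ (in the notation there, $R_{15},R_{16},R_{187},R_{188},R_{197},R_{198}$ and the $\max\{\ga_{1}^{2},\ga_{2}^{2}\}$ on the right) simply drop out. Concretely one expands $\frac{d}{dt}\big(c\|\LR{D}^{-1}w\|^{2}+\|w\|^{2}-\la(0)\langle u_{1}\LR{D}^{-1}w,w\rangle\big)$ using the equation for $w$ and the equation for $u_{1}$. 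The $\ds^{3}$ contributions from the three slots of the correction term combine, by Lemma~\ref{good1}, into multiples of $\langle\ds u_{1}\LR{D}^{-1}\ds^{2}w,w\rangle$ and $\langle\ds^{2}u_{1}\LR{D}^{-1}\ds w,w\rangle$; by Lemma~\ref{freq.est.} the first agrees with $\langle\ds u_{1}\HT\ds w,w\rangle$ up to an error $\lec\|w\|^{2}$, and with the choice of $\la(0)$ made in Definition~\ref{def1} this cancels the unique derivative--losing term $3\la(0)\langle\ds u_{1}\HT\ds w,w\rangle$ of $\frac{d}{dt}\|w\|^{2}$, while $\langle\ds^{2}u_{1}\LR{D}^{-1}\ds w,w\rangle\lec K\|w\|^{2}$ directly since $\LR{D}^{-1}\ds$ is of order zero. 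Every remaining term is cubic in $w$ with at most one net derivative and is controlled by Lemma~\ref{IBP}, Lemma~\ref{comm.est.4} (this is where the non-integrable case $c_{1}\ne c_{2}$ is absorbed, via the Hilbert-commutator identity for the $z$-slots) and parts (i)--(ii) of Lemma~\ref{comm.est.2}, each implied constant depending only on $\|u_{j}\|_{H^{5/2+\e}}\le K$; the point is that these lemmas are invoked with an index $s_{0}'\in(1/2,\e+1/2]$, so that for instance the borderline term $\langle\ds w\,\HT\ds w,w\rangle=\langle(\HT\ds w)\,\ds w,w\rangle$ is handled by Lemma~\ref{IBP} as $\lec\|\HT\ds w\|_{H^{3/2+\e}}\|w\|^{2}\lec K\|w\|^{2}$, using the full $H^{5/2+\e}$ regularity of $w$. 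To turn this formal computation into a rigorous one at the present regularity I would apply the mollifier $J_{\ga}$ of Definition~\ref{BS} to \eqref{eq.for.w2}, run the estimate for the smooth function $J_{\ga}w$, and let $\ga\to0$; the only new terms are Friedrichs--type commutators $[J_{\ga},u_{j}](\cdot)$, which tend to $0$ in $L^{2}$ by Lemma~\ref{lem_BS} together with the assumed regularity of $u_{j}$.

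Finally, $t\mapsto\ti{E}(u_{1}(t),u_{2}(t))$ is continuous on $[0,\delta]$ and vanishes at $t=0$ because $u_{1}(0)=u_{2}(0)$, so Gronwall's inequality forces $\ti{E}(u_{1}(t),u_{2}(t))\equiv0$ there; then $(iii)$ of Lemma~\ref{comparison} (applied with $K\ge\sup_{t}\|u_{1}(t)\|$) gives $\tfrac12\|w(t)\|^{2}\le\ti{E}(u_{1}(t),u_{2}(t))=0$, i.e.\ $u_{1}\equiv u_{2}$ on $[0,\delta]$. The main difficulty is exactly the derivative loss that motivates the correction term in the a priori estimate: the plain $L^{2}$ energy of $w$ cannot be closed because of the term $\langle\ds u_{1}\HT\ds w,w\rangle$, on which the presence of $\HT$ blocks integration by parts, and the delicate point is to verify that the correction term in $\ti{E}$, via Lemma~\ref{good1} and Lemma~\ref{freq.est.}, cancels this term up to harmless remainders --- and that this survives both the passage $c_{1}\ne c_{2}$ (through Lemma~\ref{comm.est.4}) and the mollification forced by the low regularity assumed on $u_{1}$ and $u_{2}$.
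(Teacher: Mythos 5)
Your proposal is correct, but it takes a genuinely different route from the paper: the paper offers no argument at all for this lemma and simply refers the reader to Theorem 6.22 of the Iorio--Iorio textbook, whereas you give a self-contained proof by rerunning the paper's own $L^{2}$-level modified-energy estimate (essentially Lemma \ref{en.dif.L2} / Corollary \ref{en.dif.L2-2} with $\ga_{1}=\ga_{2}=0$ and coinciding data) followed by Gronwall and part $(iii)$ of Lemma \ref{comparison}. Your cancellation bookkeeping is right: the only derivative-losing term in $\frac{d}{dt}\|w\|^{2}$ is $3\la(0)\langle\ds u_{1}\HT\ds w,w\rangle$, the correction term produces $-3\la(0)\langle\ds u_{1}\LR{D}^{-1}\ds^{2}w,w\rangle-3\la(0)\langle\ds^{2}u_{1}\LR{D}^{-1}\ds w,w\rangle$ via Lemma \ref{good1}, and Lemma \ref{freq.est.} converts the first into the needed cancellation up to $O(\|w\|^{2})$; the threshold $H^{5/2+\e}$ is exactly what Lemmas \ref{IBP}, \ref{comm.est.4} and \ref{comm.est.2} demand (e.g.\ $\langle(\HT\ds w)\ds w,w\rangle$ needs $\HT\ds w\in H^{3/2+\e}$, and $\ds^{2}u_{1}\in L^{\I}$ needs $u_{1}\in H^{5/2+\e}$). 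What your route buys is a proof internal to the paper that makes the regularity threshold transparent; what it costs is the extra rigor at the assumed regularity $C_{t}H^{2}\cap C^{1}_{t}H^{-1}\cap L^{\I}_{t}H^{5/2+\e}$, which is weaker than the $C_{t}H^{s+1}$ hypothesis of Corollary \ref{en.dif.L2-2}. Two small points there: the convergence of the Friedrichs commutators $[J_{\ga},u_{j}]$ is a standard fact but is not literally contained in Lemma \ref{lem_BS} and should be stated as a separate (easy) commutator lemma -- alternatively, one can avoid mollification altogether, since every pairing in $\frac{d}{dt}\ti{E}$ already makes sense as an $H^{-1}$--$H^{1}$ or $L^{2}$ duality at this regularity and the identity of Lemma \ref{good1} extends by density; and the $L^{\I}_{t}H^{5/2+\e}$ hypothesis a priori controls $\|u_{j}(t)\|_{H^{5/2+\e}}$ only for a.e.\ $t$, which you should upgrade to every $t$ by weak compactness plus the $C_{t}H^{2}$ continuity before quoting the pointwise-in-time lemmas. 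Neither issue affects the validity of the argument.
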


It remains to show the persistent property, i.e., $u\in C([0,T];H^{s}(\T))$ and the continuous dependence.
In what follows, we employ the Bona-Smith approximation argument.
We consider the following initial value problem:
\EQS{\label{BO1BS}
&\dt u-\ds^{3}u+u^{2}\ds u+c_{1}\ds(u\HT\ds u)+c_{2}\HT\ds(u\ds u)=0,\quad x\in\T,\\
\label{BO2BS}
&u(0,x)=J_{\ga}\vp(x),
}
where $J_{\ga}\vp$ is defined in Definition \ref{BS}.
Let $s\ge s_{0}>5/2$, $\vp\in H^{s}(\T)$ and $\epsilon>0$.
Lemma \ref{lem_BS} shows that $J_{\ga}\vp\in H^{\I}(\T)$.
Let $u_{\ga}\in C([0,T_{\ga});H^{s+3+\epsilon}(\T))$ be the solution (\ref{BO1pr}) with the initial data $J_{\ga}\vp$
obtained by Proposition \ref{para.regu.}.
Lemma \ref{lem_BS} and Proposition \ref{ene.est.} imply that there exists
$T=T(s_{0},\|\vp\|_{H^{s_{0}}})(\le T'=T(s_{0},\|\vp^{\ga}\|_{H^{s_{0}}}))$
such that (\ref{single1}) holds for $s+3+\epsilon$.
Lemma \ref{en.dif.L2} and the above argument show that
there exists $\tilde{u}\in C([0,T];H^{s+3}(\T))$ such that $\tilde{u}$ solves (\ref{BO1BS})--(\ref{BO2BS}).
Therefore, we have the following corollary:

\begin{cor}\label{en.dif.L2-2}
Let $s\ge s_{0}>5/2$, $T>0$,
$u_{j}\in C([0,T];H^{s+1}(\T))$ satisfy (\ref{BO1BS})
on $[0,T]\times\T$ and $\sup_{t\in[0,T]}\|u_{j}(t)\|_{H^{s}}\le K$ for $K>0$, $j=1,2$.
Then there exists $C=C(K,s_{0},s)$ such that
\begin{align}\label{E.L2-2}
\frac{d}{dt}\tilde{E}(u_{1}(t),u_{2}(t))\le C\tilde{E}(u_{1}(t),u_{2}(t))
\end{align}
on $[0,T]$.
\end{cor}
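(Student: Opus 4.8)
The plan is to rerun the energy computation from the proof of Lemma \ref{en.dif.L2}, now specialized to $\ga_1=\ga_2=0$. Write $w=u_1-u_2$ and $z=u_1+u_2$; since $u_1,u_2$ solve (\ref{BO1BS}), the difference $w$ satisfies (\ref{eq.for.w2}) with vanishing right-hand side. Because $s\ge s_0>5/2$, the hypothesis $\sup_t\|u_j(t)\|_{H^{s}}\le K$ gives in particular $\sup_t\|u_j(t)\|_{H^{s_0}}\le K$, and the regularity $u_j\in C([0,T];H^{s+1}(\T))$ — which, through the equation, yields $w\in C^{1}([0,T];H^{s-2}(\T))$ with $s-2>1/2$ — is enough to differentiate $\tilde E(u_1(t),u_2(t))$ term by term and to justify the integrations by parts below.

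First I would handle $\frac{d}{dt}\|\LR{D}^{-1}w\|^{2}$: the operator $\LR{D}^{-1}$ absorbs one of the (at most two) derivatives in each nonlinear term, so this quantity is $\lesssim\|w\|^{2}$. Next, for $\frac{d}{dt}\|w\|^{2}$ the dispersive contribution $\LR{\ds^{3}w,w}$ vanishes; the cubic term and the $c_1$-terms are bounded by $\|w\|^{2}$ via Lemma \ref{IBP}; the two $c_2$-terms are treated with the identity $\LR{[\HT,\ds z]\ds w,w}+\LR{[\HT,z]\ds^{2}w,w}=-2\LR{\ds(z\HT\ds w),w}$ together with Lemma \ref{comm.est.4}, which reduces everything except the single term $3\la(0)\int_{\T}\ds u_1(\HT\ds w)w\,dx$ that cannot be integrated by parts. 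To cancel the latter I would expand $\frac{d}{dt}\LR{u_1\LR{D}^{-1}w,w}$, the correction term in $\tilde E$: the third-order part is rewritten by Lemma \ref{good1}, its mismatch with the bad term is controlled by Lemma \ref{freq.est.}, and the remaining pieces are absorbed using $(i)$ of Lemma \ref{comm.est.2} and the $\LR{D}^{-1}$ smoothing, exactly as in the proof of Lemma \ref{en.dif.L2}. Summing the three contributions and applying $(iii)$ of Lemma \ref{comparison} (with the constant $K$ from the hypothesis, which bounds $\|u_1(t)\|$) to replace $\|w\|^{2}$ by $\tilde E(u_1,u_2)$ produces (\ref{E.L2-2}).

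The only change relative to Lemma \ref{en.dif.L2} is that the terms carrying a factor $\ga_1$ or $\ga_1-\ga_2$ — namely $R_{15},R_{16}$ in the expansion of $\frac{d}{dt}\|w\|^{2}$ and $R_{187},R_{188},R_{197},R_{198}$ in the expansion of $\frac{d}{dt}\LR{u_1\LR{D}^{-1}w,w}$ — are now identically zero, so the error $\max\{\ga_1^{2},\ga_2^{2}\}$ appearing in (\ref{E.L2}) simply disappears. Thus there is no substantial obstacle here: the corollary is a verbatim repetition of the proof of Lemma \ref{en.dif.L2} with the $\ga$-terms deleted, and the one point genuinely worth verifying is that $u_j\in C([0,T];H^{s+1}(\T))$ indeed supplies the regularity needed to differentiate $\tilde E$ in time and to perform the integrations by parts, which follows since then $w\in C^{1}([0,T];H^{s-2}(\T))$ and $s-2>1/2$.
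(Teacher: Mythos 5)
Your proposal is correct and matches the paper's intent exactly: the paper gives no separate proof of Corollary \ref{en.dif.L2-2}, presenting it as an immediate consequence of (the proof of) Lemma \ref{en.dif.L2} specialized to vanishing regularization, which is precisely your argument of deleting the $\ga$-terms so that the error $\max\{\ga_{1}^{2},\ga_{2}^{2}\}$ disappears. Your additional check that $u_{j}\in C([0,T];H^{s+1}(\T))$ supplies enough regularity to differentiate $\tilde{E}$ and integrate by parts is a sensible verification and is consistent with the hypotheses of Lemma \ref{en.dif.L2}.
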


\begin{prop}\label{en.dif.s2}
Let $s\ge s_{0}>5/2$, $T>0$,
$u_{j}\in C([0,T];H^{s+3}(\T))$ satisfy (\ref{BO1BS}) on $[0,T]\times\T$
and $\sup_{t\in[0,T]}\|u_{j}(t)\|_{H^{s}}\le K$ for $K>0$, $j=1,2$.
Then there exists $C=C(s,s_{0},K)$ such that
\EQS{\label{ene.dif.s}
\begin{aligned}
\frac{d}{dt}E_{s}(u_{1}(t),u_{2}(t))
\le &C(\|u_{1}(t)-u_{2}(t)\|_{H^{s}}^{2}+\|u_{1}(t)-u_{2}(t)\|_{H^{s_{0}-1}}^{2}\|u_{2}\|_{H^{s+1}}^{2}\\
    &+\|u_{1}(t)-u_{2}(t)\|_{H^{s_{0}-2}}^{2}\|u_{2}\|_{H^{s+2}}^{2})
\end{aligned}}
on $[0,T]$.
\end{prop}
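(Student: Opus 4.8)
The plan is to follow the structure of the proof of Lemma~\ref{pre.ene.}, replacing the single solution $u$ by the difference $w:=u_1-u_2$ and the equation (\ref{BO1pr}) by the equation satisfied by $w$. Subtracting the two copies of (\ref{BO1BS}) and using the identities
\EQQS{
&u_1^2\ds u_1-u_2^2\ds u_2=\tfrac13\ds\big((u_1^2+u_1u_2+u_2^2)w\big),\\
&u_1\HT\ds u_1-u_2\HT\ds u_2=u_1\HT\ds w+w\HT\ds u_2,\qquad u_1\ds u_1-u_2\ds u_2=u_1\ds w+w\ds u_2,
}
$w$ solves a linear equation. I then differentiate the three pieces of $E_s(u_1,u_2)=a\|w\|^2+\|D^sw\|^2+\la(s)\LR{u_1\HT D^sw,D^{s-2}\ds w}$ in $t$, substitute this equation, and bound every resulting term by the right-hand side of (\ref{ene.dif.s}).

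For $\frac{d}{dt}\|w\|^2=2\LR{\dt w,w}$ the dispersive term vanishes and the remaining terms are all $\le C\|w\|_{H^s}^2$ by Lemma~\ref{IBP}, Lemma~\ref{comm.est.4} and the Hilbert-transform identity used in the proof of Lemma~\ref{en.dif.L2}; no correction is needed here, since even the worst term $\LR{\ds u_1\HT\ds w,w}$ is $\lesssim\|w\|_{H^1}\|w\|\le\|w\|_{H^s}^2$, and $E_s(u_1,u_2)$ has no $\|f\|^{4s+2}$-summand. For $\frac{d}{dt}\|D^sw\|^2=2\LR{D^s\dt w,D^sw}$, the $\ds^3w$-term vanishes, the cubic term is bounded by $C\|w\|_{H^s}^2$ exactly as $R_2$ in Lemma~\ref{pre.ene.} (via $(i)$ of Lemma~\ref{comm.est.2} and Lemma~\ref{IBP}), and the two quadratic terms $c_1\ds(u_1\HT\ds u_1-u_2\HT\ds u_2)$, $c_2\HT\ds(u_1\ds u_1-u_2\ds u_2)$ are handled by Lemma~\ref{DL1} with $u=u_1$, $v=u_2$. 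This isolates the dangerous contribution $-2\big((c_1+c_2)s+c_2\big)\LR{\ds u_1\HT D^s\ds w,D^sw}=3\la(s)\LR{\ds u_1\HT D^s\ds w,D^sw}$, the equality being the definition of $\la(s)$, and leaves an error bounded by
\EQQS{
&C\|w\|_{H^s}\big(\|w\|_{H^s}+\|w\|_{H^{s_0-1}}\|u_2\|_{H^{s+1}}+\|w\|_{H^{s_0-2}}\|u_2\|_{H^{s+2}}\big),
}
which Young's inequality converts into the form of (\ref{ene.dif.s}).

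It remains to treat $\la(s)\frac{d}{dt}\LR{u_1\HT D^sw,D^{s-2}\ds w}$. Expanding by the product rule and inserting the equations for $\dt u_1$ and $\dt w$, I group the terms as in Lemma~\ref{pre.ene.}. The three contributions coming from the dispersive parts ($\ds^3u_1$ in the first slot, $\HT D^s\ds^3w$ in the second, $D^{s-2}\ds^4w$ in the third) combine, by Lemma~\ref{good1} together with $D^{s-2}\ds^2=-D^s$, into $-3\LR{\ds u_1\HT D^s\ds w,D^sw}$; multiplied by $\la(s)$ this cancels the dangerous term exactly. Every remaining term is then either (i) one carrying at most $s$ effective derivatives on $w$ with coefficients built from $u_1,u_2$ estimated in $H^{s_0}$ only, hence $\le C\|w\|_{H^s}^2$ by the estimates used for the analogous terms in the proof of Lemma~\ref{pre.ene.} (with Lemmas~\ref{comm.est.}, \ref{comm.est.2}, \ref{comm.est.4}, \ref{good2}, \ref{IBP}), or (ii) one in which a factor $w\HT\ds u_2$ or $w\ds u_2$ lets the surplus derivatives fall on $u_2$, producing—after Lemma~\ref{G.N.} and Young, using $\|w\|_{L^\infty}\lesssim\|w\|_{H^{s_0-2}}$ and $\|D^{s-2}\ds w\|\lesssim\|w\|_{H^s}$—exactly the quantities $\|w\|_{H^{s_0-1}}^2\|u_2\|_{H^{s+1}}^2$ and $\|w\|_{H^{s_0-2}}^2\|u_2\|_{H^{s+2}}^2$. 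Summing all contributions gives (\ref{ene.dif.s}).

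The main obstacle is the bookkeeping of this last step: one must verify that the dangerous term cancels exactly (which is precisely what forces $\la(s)=-2((c_1+c_2)s+c_2)/3$), and, crucially, that in every surviving term the loss of derivatives is shifted onto the reference solution $u_2$ and never onto $u_1$ nor beyond $H^s$ onto $w$. This asymmetry rests on using the expansion $u_1\HT\ds u_1-u_2\HT\ds u_2=u_1\HT\ds w+w\HT\ds u_2$ (rather than the one with the roles of $u_1,u_2$ interchanged), as in the proof of Lemma~\ref{DL1}, and on the asymmetric roles of $f_1,f_2$ in Lemma~\ref{good2}. A secondary technical point is that $s$ need not be an integer, so $D^{s-2}\ds$ is a genuine non-local operator; since $s_0>5/2$ one still has $s-2>1/2$, and all estimates use only Sobolev norms together with the Gagliardo–Nirenberg inequality of Lemma~\ref{G.N.}.
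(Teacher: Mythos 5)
Your overall strategy coincides with the paper's: isolate the dangerous term $3\la(s)\LR{\ds u_1\HT D^s\ds w,D^sw}$ from $\frac{d}{dt}\|D^sw\|^2$ via Lemma \ref{DL1}, cancel it against the time derivative of the correction term via Lemma \ref{good1} together with $D^{s-2}\ds^{2}=-D^{s}$, and push every surplus derivative onto $u_2$. The genuine gap is in your treatment of the cubic term in the $\|D^sw\|^2$ estimate. You keep the symmetric divergence form $\tfrac13\ds\{(u_1^2+u_1u_2+u_2^2)w\}$ and claim its pairing with $D^sw$ is bounded by $C\|w\|_{H^s}^2$ ``exactly as $R_2$ in Lemma \ref{pre.ene.}''. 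It is not: in $R_2$ of Lemma \ref{pre.ene.} the operator is $D^s$ applied to $u^2\ds u$, so the commutator $[D^s,u^2]\ds u$ puts at most $s$ derivatives on the coefficient $u^2$; here the operator is $D^s\ds$ applied to $fw$ with $f=(u_1^2+u_1u_2+u_2^2)/3$, and the contribution in which all $s+1$ derivatives land on $f$ produces $\|f\|_{H^{s+1}}\|w\|_{L^{\I}}$, i.e.\ a factor $\|u_1\|_{H^{s+1}}$. That quantity is controlled neither by $K=\sup_t\|u_j(t)\|_{H^s}$ nor by the right-hand side of (\ref{ene.dif.s}), which contains only $\|u_2\|_{H^{s+1}}$ and $\|u_2\|_{H^{s+2}}$; this asymmetry is precisely what the Bona-Smith argument requires, since in the application $u_1$ is the less regularized solution. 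No integration by parts rescues the term, because the top-order derivative sits on $u_1$ rather than on $w$. (The divergence form is harmless inside the correction term, where the pairing partner $D^{s-2}\ds w$ leaves room to move a derivative across; it is not harmless against $D^sw$.) You state the correct guiding principle --- losses must fall on $u_2$, never on $u_1$ --- but fail to apply it to the cubic nonlinearity.

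The repair is the one the paper uses: write $u_1^2\ds u_1-u_2^2\ds u_2=u_1^2\ds w+zw\ds u_2$ with $z=u_1+u_2$ \emph{before} applying $D^s$. Then $|\LR{D^s(u_1^2\ds w),D^sw}|\lesssim\|w\|_{H^s}^2$ by $(i)$ of Lemma \ref{comm.est.2} and Lemma \ref{IBP}, while $|\LR{D^s(zw\ds u_2),D^sw}|\lesssim\|w\|_{H^s}^2+\|w\|_{H^{s_0-1}}^2\|u_2\|_{H^{s+1}}^2$ --- note that even after the correct splitting the cubic term is \emph{not} bounded by $C\|w\|_{H^s}^2$ alone, contrary to your claim: it genuinely contributes a $\|u_2\|_{H^{s+1}}$ term, which is why that term appears on the right-hand side of (\ref{ene.dif.s}). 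With this replacement the remainder of your argument goes through and agrees with the paper's proof.
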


\begin{proof}
Set $w=u_{1}-u_{2}$ and $z=u_{1}+u_{2}$.
%\EQQS{
%\dt w&-\ds^{3}w+u^{2}\ds w+zw\ds u_{2}+c_{1}\ds(u_{1}\HT \ds w)\\
%&+c_{1}\ds(w\HT\ds u_{2})+c_{2}\HT\ds(u_{1}\ds w)+c_{2}\HT\ds(w\ds u_{2})=0
%}
It is easy to see that
\[\frac{d}{dt}\|w\|^{2}\lesssim\|w\|_{H^{1}}^{2}\le\|w\|_{H^{s}}^{2}.\]
Set
\begin{align*}
\frac{d}{dt}\|D^{s}w\|^{2}
&=2\LR{D^{s}\ds^{3}w,D^{s}w}
 -2\LR{D^{s}(u_{1}^{2}\ds w),D^{s}w}-2\LR{D^{s}(zw\ds u_{2}),D^{s}w}\\
&\quad-2c_{1}\LR{D^{s}\ds(u_{1}\HT\ds u_{1}-u_{2}\HT\ds u_{2}),D^{s}w}\\
&\quad-2c_{2}\LR{\HT D^{s}\ds(u_{1}\ds u_{1}-u_{2}\ds u_{2}),D^{s}w}
=:R_{1}+R_{2}+R_{3}+R_{4}+R_{5}.
\end{align*}
It is easy to see that $R_{1}=0$ and $|R_{2}|\lesssim\|w\|_{H^{s}}^{2}$ by $(i)$ of Lemma \ref{comm.est.2}.
For $R_{3}$, we have $|R_{3}|\lesssim\|w\|_{H^{s}}^{2}+\|w\|_{H^{s_{0}-1}}^{2}\|u_{2}\|_{H^{s+1}}^{2}$.
Lemma \ref{DL1} shows that
\EQQS{
&|R_{4}+R_{5}-3\la(s)\LR{\ds u_{1}\HT D^{s}\ds w,D^{s}w}|\\
&\lesssim\|w\|_{H^{s}}^{2}+\|w\|_{H^{s_{0}-2}}^{2}\|u_{2}\|_{H^{s+2}}^{2}+\|w\|_{H^{s_{0}-1}}^{2}\|u_{2}\|_{H^{s+1}}^{2}.
}
Therefore, the time derivative of $\|D^{s}w\|^{2}$ yields
\EQS{\label{eq4.8}
\begin{aligned}
\frac{d}{dt}\|D^{s}w\|^{2}
&\le C\|w\|_{H^{s}}^{2}+C\|w\|_{H^{s_{0}-1}}^{2}\|u_{2}\|_{H^{s+1}}^{2}+C\|w\|_{H^{s_{0}-2}}^{2}\|u_{2}\|_{H^{s+2}}^{2}\\
&\quad+3\la(s)\int_{\mathbb{T}}\ds u_{1}(\HT D^{s}\ds w)D^{s}wdx.
\end{aligned}
}
Next, we evaluate the time derivative of the correction term.
Lemma \ref{good1} with $f=u_{1}$, $g=\HT D^{s}w$ and $h=D^{s-2}\ds w$ shows that
\EQQS{
&\LR{\ds^{3}u_{1}\HT D^{s}w,D^{s-2}\ds w}
  +\LR{u_{1}\HT D^{s}\ds^{3}w,D^{s-2}\ds w}
  +\LR{u_{1}\HT D^{s}w,D^{s-2}\ds^{4}w}\\
&=3\LR{\ds u_{1}\HT D^{s}\ds w,D^{s-2}\ds^{2}w}
=-3\LR{\ds u_{1}\HT D^{s}\ds w,D^{s}w}.
}
Multiplying by $\la(s)$, we can cancel out the last term in the right hand side in (\ref{eq4.8}).
On the other hand, it is easy to see that
\[\LR{(\dt u_{1}-\ds^{3}u_{1})\HT D^{s}w,D^{s-2}\ds w}\lesssim\|w\|_{H^{s}}^{2}.\]
We set
\EQQS{
&\LR{u_{1}\HT D^{s}(\dt w-\ds^{3}w),D^{s-2}\ds w}\\
&=-\frac{1}{3}\LR{u_{1}\HT D^{s}\ds\{(u_{1}^{2}+u_{1}u_{2}+u_{2}^{2})w\},D^{s-2}\ds w}\\
&\quad-c_{1}\LR{u_{1}\HT D^{s}\ds(u_{1}\HT\ds w),D^{s-2}\ds w}+c_{2}\LR{u_{1}D^{s}\ds(u_{1}\ds w),D^{s-2}\ds w}\\
&\quad-c_{1}\LR{u_{1}\HT D^{s}\ds(w\HT\ds u_{2}),D^{s-2}\ds w}+c_{2}\LR{u_{1}D^{s}\ds(w\ds u_{2}),D^{s-2}\ds w}\\
&=:R_{9}+R_{10}+R_{11}+R_{12}+R_{13}
}
and
\EQQS{
&\LR{u_{1}\HT D^{s}w,D^{s-2}\ds(\dt w-\ds^{3}w)}\\
&=\frac{1}{3}\LR{u_{1}\HT D^{s}w,D^{s}\{(u_{1}^{2}+u_{1}u_{2}+u_{2}^{2})w\}}
  +c_{1}\LR{u_{1}\HT D^{s}w,D^{s}(u_{1}\HT\ds w)}\\
&\quad+c_{2}\LR{u_{1}\HT D^{s}w,\HT D^{s}(u_{1}\ds w)}+c_{1}\LR{u_{1}\HT D^{s}w,D^{s}(w\HT\ds u_{2})}\\
&\quad+c_{2}\LR{u_{1}\HT D^{s}w,\HT D^{s}(w\ds u_{2})}
=:R_{14}+R_{15}+R_{16}+R_{17}+R_{18}.
}
By $(i)$ of Lemma \ref{comm.est.2}, we have $|R_{9}|+|R_{14}|\lesssim\|w\|_{H^{s}}^{2}$.
We see from $(ii)$ of Lemma \ref{good2} that $|R_{10}|\lesssim\|w\|_{H^{s}}^{2}$.
We also have $|R_{16}|\lesssim\|w\|_{H^{s}}^{2}$ by $(i)$ of Lemma \ref{good2}.
Similarly, we can obtain $|R_{11}|+|R_{15}|\lesssim\|w\|_{H^{s}}^{2}$.
On the other hand, by $(i)$ of Lemma \ref{comm.est.2} we have
$|R_{12}|+|R_{13}|+|R_{17}|+|R_{18}|\lesssim\|w\|_{H^{s}}^{2}+\|w\|_{H^{s_{0}-2}}^{2}\|u_{2}\|_{H^{s+1}}^{2}$.
Summing these estimates above, we obtain (\ref{ene.dif.s}) on $[0,T]$, which concludes the proof.
\end{proof}

Now, we can show the persistent property and the continuous dependence.
\begin{proof}[Proof of Theorem \ref{main}]
In what follows, without loss of generality, we may assume that $s_{0}$ is strictly smaller than $s$
since the assumption $\|\vp\|_{H^{s_{0}}}\le K$ is weaker than $\|\vp\|_{H^{s_{0}'}}\le K$
when $s_{0}<s_{0}'$.
First we prove the persistence property.
Let $0<\ga_{1}<\ga_{2}<1$.
Let $u_{\ga_{j}}\in C([0,T];H^{s+3}(\T))$ be the solution to (\ref{BO1BS})--(\ref{BO2BS})
with the initial data $J_{\ga}\vp$
for $\vp\in H^{s}(\T)$ and $j=1,2$.
Corollary \ref{en.dif.L2-2} with the Gronwall inequality shows that
\EQQS{
\sup_{t\in[0,T]}\|u_{\ga_{1}}(t)-u_{\ga_{2}}(t)\|^{2}
\le C\tilde{E}(u_{\ga_{1}}(0),u_{\ga_{2}}(0))
\le C\|J_{\ga_{1}}\vp-J_{\ga_{2}}\vp\|^{2}\le C\ga_{2}^{2s}
}
since $\ga_{1}<\ga_{2}$.
This together with the interpolation implies that
\EQQS{
\sup_{t\in[0,T]}\|u_{\ga_{1}}(t)-u_{\ga_{2}}(t)\|_{H^{\al}}^{2}
\le C\ga_{2}^{2(s-\al)}
}
for any $0\le\al<s$.
On the other hand, Lemma \ref{lem_BS} and \ref{pre.ene.} show that
\[\sup_{t\in[0,t]}\|u_{\ga_{2}}(t)\|_{H^{s+\al}}^{2}
  \le C\|J_{\ga_{2}}\vp\|_{H^{s+\al}}^{2}\le C\ga_{2}^{-2\al}\|\vp\|_{H^{s}}^{2}\]
for $\al\ge0$.
This together with the Gronwall inequality and Proposition \ref{en.dif.s2} implies that
\EQQS{
\sup_{t\in[0,T]}\|u_{\ga_{1}}(t)-u_{\ga_{2}}(t)\|_{H^{s}}^{2}
%\lesssim&\|J_{\ga_{1}}\vp-J_{\ga_{2}}\vp\|_{H^{s}}^{2}
%          +\sup_{t\in[0,T]}\|u_{\ga_{2}}(t)\|_{H^{s+1}}^{2}\|u_{\ga_{1}}(t)-u_{\ga_{2}}(t)\|_{H^{s_{0}-1}}^{2}\\
%         &+\sup_{t\in[0,T]}\|u_{\ga_{2}}(t)\|_{H^{s+2}}^{2}\|u_{\ga_{1}}(t)-u_{\ga_{2}}(t)\|_{H^{s_{0}-2}}^{2}\\
\lesssim\|J_{\ga_{1}}\vp-J_{\ga_{2}}\vp\|_{H^{s}}^{2}
         +\ga_{2}^{2(s-s_{0})}\to0
}
as $\ga_{2},\ga_{1}\to0$ since $\|J_{\ga_{1}}\vp-J_{\ga_{2}}\vp\|_{H^{s}}\to0$
as $\ga_{1},\ga_{2}\to0$.
Then, there exists $\tilde{u}\in C([0,T];H^{s}(\T))$ such that
\[u_{\ga}\to\tilde{u}\ \ {\rm in}\ \ C([0,T];H^{s}(\T))\quad{\rm as}\quad\ga\to0.\]
It is clear that the function $\tilde{u}$ coincides
with our solution $u\in C([0,T];H^{r}(\T))$ for $r<s$ to (\ref{BO1})--(\ref{BO2}),
which shows the persistence property.

Finally, we prove the continuous dependence, which is the only thing left to prove.
We will claim that
\EQS{\label{con.dep.}
\begin{aligned}
&\forall \vp\in H^{s}(\T),\forall\epsilon>0,\exists\delta>0,\forall\psi\in H^{s}(\T):\\
&\left[\|\vp-\psi\|_{H^{s}}<\delta\Rightarrow\sup_{t\in[0,T/2]}\|u(t)-v(t)\|_{H^{s}}<\epsilon\right],
\end{aligned}
}
where $u,v$ represent the solution to (\ref{BO1}) with initial data $\vp,\psi\in H^{s}(\T)$, respectively,
which are obtained by the above argument.
In (\ref{con.dep.}) we take the interval $[0,T/2]$ with $T$
as defined by Proposition \ref{ene.est.} to guarantee that
if $\|\vp-\psi\|_{H^{s}}<\delta$, then the solution $v(t)$ is defined in the time interval $[0,T/2]$.
Fix $\vp\in H^{s}(\T)$ and $\epsilon>0$.
Let $0<\ga_{1}<\ga_{2}<1$.
Assume that $\|\vp-\psi\|_{H^{s}}<\delta$, where $\delta>0$ will be chosen later.
Note that by the triangle inequality we have
\EQS{\label{eq3.74}
\begin{aligned}
&\sup_{t\in[0,T/2]}\|u(t)-v(t)\|_{H^{s}}\\
&\le\sup_{t\in[0,T/2]}\|u(t)-u^{\ga_{2}}(t)\|_{H^{s}}
   +\sup_{t\in[0,T/2]}\|u^{\ga_{2}}(t)-v^{\ga_{1}}(t)\|_{H^{s}}\\
&\quad+\sup_{t\in[0,T/2]}\|v^{\ga_{1}}(t)-v(t)\|_{H^{s}},
\end{aligned}
}
where $u^{\ga_{2}}$ and $v^{\ga_{1}}$ represent the solution to the IVP (\ref{BO1})
with the initial data $J_{\ga_{2}}\vp$ and $J_{\ga_{1}}\psi$, respectively.
First we handle the second term in the right hand side in (\ref{eq3.74}).
Again, the triangle inequality shows that
\[\|J_{\ga_{2}}\vp-J_{\ga_{1}}\psi\|_{H^{r}}
  \le\|J_{\ga_{2}}\vp-\vp\|_{H^{r}}+\|\vp-\psi\|_{H^{r}}+\|\psi-J_{\ga_{1}}\psi\|_{H^{r}}\]
for $r\le s$.
Proposition \ref{en.dif.s2} with $u_{1}=v^{\ga_{1}}$ and $u_{2}=u^{\ga_{2}}$ gives that
\EQQS{
&\sup_{t\in[0,T/2]}\|u^{\ga_{2}}(t)-v^{\ga_{1}}(t)\|_{H^{s}}\\
%\le &C\|J_{\ga_{2}}\vp-J_{\ga_{1}}\psi\|_{H^{s}}
%    +C\|J_{\ga_{2}}\vp-J_{\ga_{1}}\psi\|^{1+1/s-s_{0}/s}\|J_{\ga_{2}}\vp\|_{H^{s+1}}\\
%    &+C\|J_{\ga_{2}}\vp-J_{\ga_{1}}\psi\|^{1+2/s-s_{0}/s}\|J_{\ga_{2}}\vp\|_{H^{s+2}}\\
&\le C\|J_{\ga_{2}}\vp-\vp\|_{H^{s}}+C\delta+C\|\psi-J_{\ga_{1}}\psi\|_{H^{s}}
    +C\ga_{2}^{s-s_{0}}
    +C\ga_{2}^{-1}\delta^{1+1/s-s_{0}/s}\\
&\quad+C\ga_{2}^{-1}\|\psi-J_{\ga_{1}}\psi\|^{1+1/s-s_{0}/s}
    +C\ga_{2}^{s-s_{0}}
    +C\ga_{2}^{-2}\delta^{1+2/s-s_{0}/s}\\
&\quad+C\ga_{2}^{-2}\|\psi-J_{\ga_{1}}\psi\|^{1+2/s-s_{0}/s}.
}
Therefore, we choose $\ga_{2}>0$ so that
\[\sup_{t\in[0,T/2]}\|u(t)-u^{\ga_{2}}(t)\|_{H^{s}}+C\|J_{\ga_{2}}\vp-\vp\|_{H^{s}}
  +C\ga_{2}^{s-s_{0}}<\frac{\epsilon}{3},\]
Then we take $\delta>0$ such that
\[C(\delta+\ga_{2}^{-1}\delta^{1+1/s-s_{0}/s}+\ga_{2}^{-2}\delta^{1+2/s-s_{0}/s})<\frac{\epsilon}{3}\]
and finally for each $\psi\in H^{s}(\T)$ satisfying $\|\vp-\psi\|_{H^{s}}<\delta$
we take $\ga_{1}\in(0,\ga_{2})$ such that
\EQQS{
&\sup_{t\in[0,T/2]}\|v^{\ga_{1}}(t)-v(t)\|_{H^{s}}+C\|\psi-J_{\ga_{1}}\psi\|_{H^{s}}\\
&+C\ga_{2}^{-1}\|\psi-J_{\ga_{1}}\psi\|^{1+1/s-s_{0}/s}
  +C\ga_{2}^{-2}\|\psi-J_{\ga_{1}}\psi\|^{1+2/s-s_{0}/s}<\frac{\epsilon}{3}.
}
which completes the proof of (\ref{con.dep.}).
\end{proof}

\section*{Acknowlegdements}
The author would like to express his deep gratitude to Professor Kotaro Tsugawa for valuable comments and encouragement.


\begin{thebibliography}{99}
% \bibitem{Pava} J. A. Pava, {\it Nonlinear Dispersive Equations; Existence and Stability of Solitary and Periodic Travelling Wave Solutions}, AMS.
 \bibitem{Matsuno} Y. Matsuno, ``Bilinear Transformation Method,'' Academic press, Orlando, 1984.
 \bibitem{ErTzi} M. B. Erdogan and N. Tzirakis, ``Dispersive Partial Differential Equations; Wellposedness and Applications,'' Cambridge Student Texts, 86. Cambridge University Press, 2016.
 \bibitem{Iorio} R. Iorio and V. M. Iorio, ``Fourier Analysis and Partial Differential Equations,'' Cambridge Stud. Adv. Math., 70. Cambridge University Press, 2001.
 \bibitem{Kwon} S. Kwon, {\it On the fifth-order KdV equation: local well-posedness and lack of uniform continuity of the solution map}, J. Differential Equations 245 (2008), 2627--2659.
 \bibitem{Segata} J. Segata, {\it Refined energy inequality with application to well-posedness for the fourth order nonlinear Sch\"odinger type equation on torus}, J. Differential Equations 252 (2012), 5994--6011.
 \bibitem{SchwarzJr.} M. Schwarz Jr., {\it The initial value problem for the sequence of generalized Korteweg-de Vries equations}, Adv. in Math. 54 (1984), 22--56.
 \bibitem{Molinet12} L. Molinet and D. Pilod, {\it The Cauchy problem for the Benjamin-Ono equation in $L^{2}$ revisited}, Anal. PDE 5 (2012), 365--395.
 \bibitem{IoKe07} A. D. Ionescu and C. E. Kenig, {\it Global well-posedness of the Benjamin-Ono equation in low-regularity spaces}, J. Amer. Math. Soc. 20 (2007), 753--798.
 \bibitem{Molinet08} L. Molinet, {\it Global well-posedness in $L^{2}$ for the periodic Benjamin-Ono equation}, Amer. J. Math. 130 (2008), 635--683.
 \bibitem{Molinet09} L. Molinet, {\it Sharp ill-posedness result for the periodic Benjamin-Ono equation}, J. Funct. Anal. 257 (2009), 3488--3516.
 \bibitem{FeHa96} X. Feng and X. Han, {\it On the Cauchy problem for the third order Benjamin-Ono equation}, J. London Math. Soc. (2) 53 (1996), 512--528.
 \bibitem{Feng97} X. Feng, {\it Well-posed solutions of the third order Benjamin-Ono equation in weighted Sobolev spaces,} Bull. Belg. Math. Soc. Simon Stevin 4 (1997), 525--537.
 \bibitem{LPP11} F. Linares, D. Pilod and G. Ponce, {\it Well-posedness for a higher-order Benjamin-Ono equation}, J. Differential Equations 250 (2011), 450--475.
 \bibitem{MP12} L. Molinet and D. Pilod, {\it Global well-posedness and limit behavior for a higher-order Benjamin-Ono equation}, Comm. Partial Differential Equations 37 (2012), 2050--2080.
 \bibitem{Tsugawa17} K. Tsugawa, {\it Parabolic smoothing effect and local well-posedness of fifth order semilinear dispersive equations on the torus}, preprint.
 \bibitem{KePi16} C. E. Kenig and D. Pilod, {\it Local well-posedness for the KdV hierarchy at high regularity}, Adv. Differential Equations 21 (2016), 801--836.
 \bibitem{ABFS} L. Abdelouhab, J. L. Bona, M. Felland and J. C. Saut, {\it Nonlocal models for nonlinear, dispersive waves}, Phys. D 40 (1989), 360--392.
 \bibitem{BuPl} N. Burq and F. Planchon, {\it On well-posedness for the Benjamin-Ono equation}, Math. Ann. 340 (2008), 497--542.
 \bibitem{Iorio86} R. J. I\'orio, Jr., {\it On the Cauchy problem for the Benjamin-Ono equation}, Comm. Partial Differential Equations 11 (1986), 1031--1081.
 \bibitem{KeKe} C. E. Kenig and K. D. K\"oenig, {\it On the local well-posedness of the Benjamin-Ono and modified Benjamin-Ono equations}, Math. Res. Lett. 10 (2003), 879--895.
 \bibitem{DMcP} L. Dawson, H. Mcgahagan and G. Ponce, {\it On the decay properties of solutions to a class of Schr\"odinger equations}, Proc. Amer. Math. Soc. 136 (2008), 2081--2090.
 \bibitem{KoTz} H. Koch and N. Tzvetkov, {\it On the Local Well-Posedness of the Benjamin-Ono Equation in $H^{s}(\R)$}, Int. Math. Res. Notices 26 (2003), 1449--1464.
 \bibitem{Ponce} G. Ponce, {\it On the global well-posedness of the Benjamin-Ono equation}, Differential Integral Equations 4 (1991), 527--542.
 \bibitem{Tao} T. Tao, {\it Global well-posedness of the Benjamin-Ono equation in $H^{1}(\R)$}, J. Hyperbolic Differ. Equ. 1 (2004), 27--49.
 \bibitem{IoKe072} A. D. Ionescu and C. E. Kenig, {\it Complex-valued solutions of the Benjamin-Ono equation}, Contemp. Math., 428 (2007), 61--74.
 \bibitem{FGO} K. Fujiwara, V. Georgiev and T. Ozawa, {\it Higher order fractional Leibniz rule}, J. Fourier Anal. Appl. 24 (2018), 650--665.
\end{thebibliography}
\end{document}